\def\@settitle{\begin{center}%
    \bfseries
 \normalfont\LARGE\@title
  \end{center}%
}
\def\@setauthors{\begin{center}%
 \normalsize\@author
  \end{center}%
}
\numberwithin{equation}{section}
\renewcommand{\cal}{\mathcal}
\newcommand\cA{{\mathcal A}}
\newcommand{\cE}{{\cal E}}
\newcommand{\cG}{{\cal G}}
\newcommand{\cL}{{\cal L}}
\newcommand{\cM}{{\cal M}}
\newcommand{\cN}{{\cal N}}
\newcommand{\cP}{{\cal P}}
\newcommand{\cS}{{\mathcal S}}
\newcommand{\cU}{{\mathcal U}}
\newcommand{\fb}{{\mathfrak b}}
\newcommand{\fc}{{\mathfrak c}}
\newcommand{\fd}{{\mathfrak d}}
\newcommand{\fn}{{\mathfrak n}}
\newcommand{\fm}{{\mathfrak m}}
\newcommand{\bma}{{\bm{a}}}
\newcommand{\bmb}{{\bm{b}}}
\newcommand{\bme}{{\bm{e}}}
\newcommand{\bmn}{{\bm{n}}}
\newcommand{\bmr}{{\bm{r}}}
\newcommand{\bmt} {{\bm t }}
\newcommand{\bmu}{{\bm{u}}}
\newcommand{\bmv}{{\bm{v}}}
\newcommand{\bmw}{{\bm{w}}}
\newcommand{\bmx}{{\bm{x}}}
\newcommand{\bms}{\bm s}
\newcommand{\bmm}{{\bm m}}
\newcommand{\bmmu}{{\bm \mu}}
\newcommand{\rd}{{\rm d}}
\newcommand{\ri}{\mathrm{i}}
\newcommand{\bF}{{\mathbb F}}
\newcommand{\bE}{\mathbb{E}}
\newcommand{\bP}{\mathbb{P}}
\newcommand{\bR}{{\mathbb R}}
\newcommand{\bS}{\mathbb S}
\newcommand{\bZ}{\mathbb{Z}}
\DeclareMathOperator{\Tr}{Tr}
\DeclareMathOperator{\vol}{vol}
\DeclareMathOperator{\OO}{O}
\DeclareMathOperator{\oo}{o}
\DeclareMathOperator{\argmax}{argmax}
\DeclareMathOperator{\argmin}{argmin}
\renewcommand{\Re}{\mathop{\mathrm{Re}}}
\newcommand{\deq}{\mathrel{\mathop:}=} 
\newcommand{\eqd}{=\mathrel{\mathop:}} 
\renewcommand{\leq}{\leqslant}
\renewcommand{\geq}{\geqslant}
\newcommand{\beq}{\begin{equation}}
\newcommand{\eeq}{\end{equation}}
\theoremstyle{plain} 
\newtheorem{theorem}{Theorem}[section]
\newtheorem*{theorem*}{Theorem}
\newtheorem*{lemma*}{Lemma}
\newtheorem*{corollary*}{Corollary}
\newtheorem{proposition}[theorem]{Proposition}
\newtheorem*{proposition*}{Proposition}
\newtheorem*{assumption*}{Assumption}
\newtheorem*{definition*}{Definition}
\newtheorem*{example*}{Example}
\newtheorem{remark}[theorem]{Remark}
\newtheorem*{remark*}{Remark}
\newtheorem*{remarks*}{Remarks}
\def\author#1{\par
    {\centering{\authorfont#1}\par\vspace*{0.05in}}
}
\def\titlefont{\fontsize{13}{15}\bfseries\boldmath\selectfont\centering{}}
\def\authorfont{\fontsize{13}{15}}
\let\affiliationfont\rhfont
\def\address#1{\par
    {\centering{\affiliationfont#1\par}}\par\vspace*{11pt}
}
\def\body{
\setcounter{footnote}{0}
\def\thefootnote{\alph{footnote}}
\def\@makefnmark{{$^{\rm \@thefnmark}$}}
}
\def\title#1{
    \thispagestyle{plain}
    \vspace*{-14pt}
    \vskip 79pt
    {\centering{\titlefont #1\par}}%
    \vskip 1em
}
\newcommand{\Mod}[1]{\ (\mathrm{mod}\ #1)}
\newcommand{\spn}{\mathrm{span}}
\begin{document}

\title{Invertibility of adjacency matrices for random $d$-regular  graphs}

\vspace{1.2cm}

 \author{Jiaoyang Huang}
\address{Harvard University\\
   E-mail: jiaoyang@math.harvard.edu}

~\vspace{0.3cm}

\begin{abstract}
Let $d\geq 3$ be a fixed integer and $A$ be the adjacency matrix of a random $d$-regular directed or undirected graph on $n$ vertices. We show there exist constants $\fd>0$,
\begin{align*}
\bP(\text{$A$ is singular in $\bR$})\leq n^{-\fd},
\end{align*}
for $n$ sufficiently large. This answers an open problem by Frieze \cite{MR3728474} and Vu \cite{MR2432537, MR3727622}. The key idea is to study the singularity probability of adjacency matrices over a finite field $\bF_p$. The proof combines a local central limit theorem and a large deviation estimate.
\end{abstract}

\section{Introduction}
The most famous combinatorial problem concerning random matrices is perhaps the ``singularity'' problem. In a standard setting, when the entries of the $n\times n$ matrix are i.i.d. Bernoulli random variables (taking values $\pm1$ with probability $1/2$), this problem was first done by Koml{\'o}s \cite{MR0221962,MR0238371}, where he showed the probability of being singular is $\OO(n^{-1/2})$.  This bound was significantly improved by
Kahn, Koml{\'o}s and Szemer{\'e}di \cite{MR1260107} to an exponential bound
\begin{align*}
\bP(\text{random Bernoulli matrix is singular})< c^n,
\end{align*}
for $c=0.999$, for $c=3/4+\oo(1)$ by Tao and Vu \cite{MR2291914}, and by Rudelson and Vershynin \cite{MR2407948} . The often conjectured optimal value of
$c$ is $1/2 + \oo(1)$, and the best known value $c=1/\sqrt{2} + \oo(1)$ is due to Bourgain, Vu and Wood  \cite{MR2557947}.
Analogous results on singularity of symmetric Bernoulli matrices
were obtained in \cite{MR3158627, MR2891529,MR2267289}.

The above question can be reformulated for the
adjacency matrices of random graphs, either directed
or undirected. Both directed and undirected graphs are abundant in real life. One of the widely studied model in the undirected random graph literature is the Erd{\H o}s-R{\'e}nyi 
graph $G(n,p)$. It was shown by Costello and Vu in \cite{MR2446482}, that the adjacency matrix of $G(n,p)$ is nonsingular with high probability whenever the edge connectivity probability $p$ is above the connectivity threshold $\ln n/n$. For directed  Erd{\H o}s-R{\'e}nyi
graph, a quantitative estimate on the smallest singular value was obtained by Basak and Rudelson in \cite{MR3620692, BR}.

Another intensively studied random graph model is the random $d$-regular graph. For the adjacency matrix of random $d$-regular graphs, its entries are no longer independent. The lack of independence poses significant difficulty for the singularity problem of random $d$-regular graphs. For undirected random $d$-regular graphs, when $d\geq n^{c}$ with any $c>0$, it follows from the bulk universality result \cite{fix2} by Landon, Sosoe and Yau, the adjacency matrix is nonsingular with high probability. For random $d$-regular directed graphs, it was first proven by Cook in \cite{MR3602844}, the adjacency matrix is nonsingular with high probability when $C\ln^2 n\leq d\leq n-C\ln^2 n$. Later in \cite{MR3545253}, it was proven by Litvak, Lytova, Tikhomirov, Tomczak-Jaegermann and Youssef that, when $C\leq d\leq n/(C\ln^2 n)$, the singularity probability is bounded by $\OO(\ln^3 d/\sqrt{d})$. Quantitative estimates on the smallest singular values were derived in \cite{Nik1, basak2018, Alex1}.

For random $d$-regular graphs, the most challenging case is when $d$ is a constant. It was posted as an open problem first appeared in \cite[Conjecture 8.4]{MR2432537} by Vu, and later collected in \cite[Section 9, Problem 7]{MR3728474} by Frieze  and \cite[Conjecture 5.8]{MR3727622} by Vu.
In \cite{Alex2}, it was proven by Litvak, Lytova, Tikhomirov, Tomczak-Jaegermann and Youssef that the adjacency matrix of random $d$-regular directed graphs has rank at least $n-1$ with high probability. In this paper we prove that the adjacency matrix of random $d$-regular directed and undirected graphs is nonsingular with high probability. 

One  approach to estimate the singularity probability of random matrices is to decompose the null vectors $\bS^{n-1}$ into subsets according to different structural properties, e.g., combinatorial dimension \cite{MR1260107,MR2291914}, compressible and imcompressible vectors \cite{MR2407948, Nik1,MR3602844,basak2018,MR2569075}, and statistics of jumps \cite{MR3620692, BR,MR3545253,Alex1,Alex3}.  Different from previous works, which directly study the singularity probability over $\bR$, the key new idea in this paper is to study the singularity probability of adjacency matrices over a finite field $\bF_p$. At first glance, this may seem wasteful, as we discard a great amount of information. Moreover, as a matrix over $\bF_p$, the determinant of the adjacency matrix takes value in $\bF_p$. One expects that the determinant takes value zero with probability about $1/p$. In other words, the adjacency matrix over $\bF_p$ may be singular with positive probability. However, the benefit is that, over finite field $\bF_p$ we can better understand the arithmetic structure of the null vectors, which enables us to obtain a sharp estimate of the singularity probability. We decompose the null vectors $\bF_p^n$ into two classes, the equidistributed class where each number has approximately the same density, and the non-equidistributed class. We estimate the number of adjacency matrices which have a null vector in the equidistributed class using a local central limit theorem, and the number of adjacency matrices which have a null vector in the non-equidistributed class using a large deviation estimate.  In \cite{FLMS}, Ferber, Luh, McKinley and  Samotij use a similar idea to prove resilience results for random Bernoulli matrices.

After the appearance of the current preprint, the asymptotic nonsingularity of adjacency
matrices of random $d$-regular directed graphs and random $d$-regular undirected graphs with even number of vertices are proven by M{\'e}sz{\'a}ros \cite{AM}, and by 
Nguyen and Wood \cite{NW}. The work of M{\'e}sz{\'a}ros \cite{AM} studies the distribution of the sandpile group of random $d$-regular graphs, and determines the distribution of $p$-Sylow subgroup of the sandpile group. Based on \cite{AM}, Nguyen and Wood in \cite{NW}, study the distribution of the cokernels of adjacency matrices of random $d$-regular graphs, and observe that the convergence of such distributions implies asymptotic nonsingularity of the matrices. Our model is slightly more general, and we obtain quantitative estimates on the singularity probability.

\noindent\textbf{Acknowledgement.} I am thankful to Elchanan Mossel and Mustazee Rahman for suggesting the problem of studying adjacency matrices of random $d$-regular graphs over finite fields. I am also grateful to Weifeng Sun for enlightening discussions, and to Nicholas Cook, Van Vu and Melanie Wood for helpful comments on the first draft of this paper.

\subsection{Main results}

We study the configuration model of random $d$-regular directed and undirected graphs, introduced by Bollob{\'a}s in \cite{MR595929} (ideas similar to the configuration model were also presented in \cite{MR0505796,MR545196,MR527727}). By a contiguity argument, our main results also hold for other random $d$-regular directed and undirected graph models, e.g. the uniform model, the sum of $d$ random permutations and the sum of $d$ random perfect matching matrices. 

For the configuration model, one generates a random $d$-regular directed graph by the following procedure:
\begin{enumerate}
\item Associate to each vertex $k \in \{1,2,\cdots,n\}$ a fiber $F_k$ of $d$ points, so that there are
$
\left| \cup_{k\in \{1,2,\cdots,n\}}F_k\right|=nd
$
points in total.
\item Select a permutation $\cP$ of the $nd$ points uniformly at random.
\item For any vertex $k \in \{1,2,\cdots,n\}$, and point $k'\in F_k$, we add a directed edge from vertex $k$ to vertex $\ell$ if the point $\cP(k')$ belongs to fiber $F_{\ell}$.
\end{enumerate}
We denote the $d$-regular directed graphs obtained from the above procedure by $\mathsf{M}_{n,d}$, which is a multiset. It is easy to see from the construction procedure that $|\mathsf{M}_{n,d}|=(nd)!$. Let $\cG\in \mathsf{M}_{n,d}$, one may identify $\cG$ with a random $d$-regular bipartite graph on $n + n$ vertices in the obvious way. We denote $A$ = $A(\cG)$ the adjacency matrix of $\cG$, i.e. $A_{k\ell}$ is the number of directed edges from vertex $k$ to vertex $\ell$.

For the configuration model, one generates a random $d$-regular undirected graph on $n$ vertices with $2|dn$, by the following procedure:
\begin{enumerate}
\item Associate to each vertex $k \in \{1,2,\cdots,n\}$ a fiber $F_k$ of $d$ points, so that there are
$
\left| \cup_{k\in \{1,2,\cdots,n\}}F_k\right|=nd
$
points in total.
\item Select a pairing $\cP$ of the $nd$ points uniformly at random, and add an edge from point $k'$ to point $\ell'$ if $\{k',\ell'\}\in\cP$. 
\item Collapse each fiber $F_k$ to the associated vertex $k$.   
\end{enumerate}
We denote the $d$-regular undirected graphs obtained from the above procedure by $\mathsf{G}_{n,d}$, which is a multiset. It is easy to see from the construction procedure that $|\mathsf{G}_{n,d}|=2^{-nd/2}(nd)!/(nd/2)!$. Let $\cG\in \mathsf{G}_{n,d}$, we denote $A$ = $A(\cG)$ the adjacency matrix of $\cG$, i.e. $A_{k\ell}$ is the number of edges between vertex $k$ and vertex $\ell$.

\begin{theorem}\label{thm:smcount}
Let $d\geq 3$ be a fixed integer, and a prime number $p$ such that $\gcd(p,d)=1$. Then over $\bF_p$, 
we have for $p\ll n^{\min\{1/4, (d-2)/2d\}}$
\begin{align}\label{e:smcountd}
\sum_{\bmv\in \bF_p^n\setminus{\bm0}}|\{\cG\in \mathsf{M}_{n,d}: A(\cG)\bmv=\bm0\}|= (1+\oo(1))|\mathsf{M}_{n,d}|,
\end{align}
and for $p\ll n^{\min\{1/8, (d-2)/(5d-6)\}}$
\begin{align}\label{e:smcountu}
\sum_{\bmv\in \bF_p^n\setminus{\bm0}}|\{\cG\in \mathsf{G}_{n,d}: A(\cG)\bmv=\bm0\}|= (1+\bm1(p=2)+\oo(1))|\mathsf{G}_{n,d}|,
\end{align}
for $n$ sufficiently large.
\end{theorem}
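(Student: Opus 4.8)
The plan is to group null vectors by their value profile, evaluate the count for each profile through a character sum over $\bF_p$, and split according to whether the profile is equidistributed. Write $N(\bmv)$ for the number of configurations $\cG\in\mathsf M_{n,d}$ (resp.\ $\mathsf G_{n,d}$) with $A(\cG)\bmv=\bm0$. Relabelling vertices shows that $N(\bmv)$ depends only on the \emph{profile} $\bmm=(m_0,\dots,m_{p-1})$ of $\bmv$, where $m_a=\abs{\h{k\colon v_k=a}}$; write it $N(\bmm)$. Since exactly $n!/(m_0!\cdots m_{p-1}!)=:\binom{n}{\bmm}$ vectors have a given profile, the left side of \eqref{e:smcountd} equals $\sum_{\bmm\neq(n,0,\dots,0)}\binom{n}{\bmm}N(\bmm)$, and likewise for \eqref{e:smcountu}. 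To evaluate $N(\bmm)$ I insert $\indb{(A\bmv)_k=0}=p^{-1}\sum_{t_k\in\bF_p}\re^{2\pi\ri t_k(A\bmv)_k/p}$ for each $k$ and interchange sums, obtaining $N(\bmm)=p^{-n}\sum_{\bmt\in\bF_p^n}S(\bmt)$ with $S(\bmt):=\sum_\cP\prod_k\re^{2\pi\ri t_k(A(\cP)\bmv)_k/p}$, a permanent-type sum over the $(nd)!$ configurations. The main term comes from the \emph{trivial} characters, those $\bmt$ for which the phase $\sum_k t_k(A(\cP)\bmv)_k$ vanishes identically in $\cP$: since the row and column sums of $A$ equal $d$ and $\gcd(p,d)=1$, for non-constant $\bmv$ (a constant $\bmv\neq\bm0$ has $N(\bmv)=0$, as then $A\bmv$ is a nonzero multiple of $\bm1$) these are exactly $\bmt\in\bF_p\bm1$, \emph{provided} $\bmm$ is consistent, i.e.\ $\sum_a a\,m_a\equiv0\pmod p$; if $\bmm$ is inconsistent then $N(\bmm)=0$. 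For $\mathsf G_{n,d}$ over $\bF_2$ there is one extra relation: as $A$ is symmetric with even diagonal mod $2$, $\sum_k v_k(A\bmv)_k=2\sum_{\{k,\ell\}\in E}v_kv_\ell\equiv0$, so the trivial characters fill $\spn_{\bF_2}\h{\bm1,\bmv}$, of size $4$. Hence the trivial characters contribute $(1+\bm1(p=2))\abs{\mathsf M_{n,d}}p^{-(n-1)}$, resp.\ $(1+\bm1(p=2))\abs{\mathsf G_{n,d}}p^{-(n-1)}$, and everything reduces to showing that the remaining characters, after weighting by $\binom n\bmm$ and summing over $\bmm$, contribute $\oo(1)$ of this.

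\textbf{The local central limit theorem (equidistributed profiles).}
Call $\bmm$ equidistributed if $\abs{m_a-n/p}\le\eta n$ for all $a$, for a fixed small $\eta$. For such profiles I expect $N(\bmm)=(1+\bm1(p=2)+\oo(1))\abs{\mathsf M_{n,d}}p^{-(n-1)}$ uniformly, proved by a quantitative cancellation bound for $S(\bmt)$ decaying in the ``distance'' $r(\bmt)$ of $\bmt$ to the trivial subspace. Revealing $\cP$ fibre by fibre, each $(A\bmv)_k$ is a sum of $d$ values sampled without replacement from the $nd$ values $v_{\text{fiber}(\cdot)}$, and distinct active fibres are nearly independent, so a local limit / Berry--Esseen estimate for sampling without replacement should yield $\abs{S(\bmt)}\le\abs{\mathsf M_{n,d}}\,(C\beta(\bmm))^{d\,r(\bmt)}$, where $\beta(\bmm)=\max_{s\neq0}\absb{\sum_a(m_a/n)\re^{2\pi\ri as/p}}$ is the Fourier bias of the empirical value distribution, of size $\OO(p\eta)$ when $\bmm$ is equidistributed. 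Since there are at most $p\binom{n}{r}(p-1)^r$ characters with $r(\bmt)=r$, summing over $r\ge1$ gives $p^{-n}\sum_{\bmt\text{ nontrivial}}\abs{S(\bmt)}=\oo(p^{1-n}\abs{\mathsf M_{n,d}})$ exactly in the stated range of $p$; the exponents $n^{1/4}$ and $n^{(d-2)/2d}$ (and $n^{1/8}$, $n^{(d-2)/(5d-6)}$ for $\mathsf G_{n,d}$, where the matching model has a weaker local limit theorem) arise from balancing $\eta$, the local-limit error, the factor $d$ in $d\,r(\bmt)$, and the entropy $\log\binom nr$. Finally, equidistributed consistent profiles carry $(1-\oo(1))$ of the total weight $\sum_{\bmm\text{ consistent}}\binom n\bmm=p^{\,n-1}$, so summing the asymptotic for $N(\bmm)$ over them reproduces the main term $(1+\bm1(p=2)+\oo(1))\abs{\mathsf M_{n,d}}$.

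\textbf{The large deviation estimate (non-equidistributed profiles).}
It remains to show $\sum_{\bmm\text{ not equidistributed}}\binom n\bmm N(\bmm)=\oo(\abs{\mathsf M_{n,d}})$. Here $\binom n\bmm=\exp(nH(\bmm/n)+\oo(n))$ is exponentially smaller than $p^n$, and a cruder use of the character sum --- $\abs{S(\bmt)}\le\abs{\mathsf M_{n,d}}\,\beta(\bmm)^{d\,r(\bmt)}$ for all nontrivial $\bmt$ --- gives $N(\bmm)\le\abs{\mathsf M_{n,d}}\,\rho(\bmm)^{n}\re^{\oo(n)}$, where $\rho(\bmm)=\bP(X_1+\dots+X_d\equiv0\pmod p)$ for i.i.d.\ $X_i$ with law $a\mapsto m_a/n$. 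Because $d\ge3$, the potential $\Phi(\bmq)=H(\bmq)+\log\rho(\bmq)$ on probability vectors over $\bF_p$ is maximised at $\bmq=$ uniform with value $0$, and is $\le-c(\eta)<0$ once $\bmq$ is $\eta$-far from uniform \emph{and} $\eta$-far from every point mass --- an elementary inequality which, with the profile count, bounds the sum by an exponentially small multiple of $\abs{\mathsf M_{n,d}}$. Profiles with $\bmv$ near a point mass are handled separately: if the point is a nonzero value then $A\bmv$ is close to a nonzero multiple of $\bm1$ and $N(\bmm)=0$, while if the point is $0$ (so $\bmv$ is supported on $\oo(n)$ coordinates), $A\bmv=\bm0$ together with $\gcd(p,d)=1$ forces rigid local structure around $\supp\bmv$, and a standard localized-vector count bounds their contribution by $\oo(\abs{\mathsf M_{n,d}})$. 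The undirected bound \eqref{e:smcountu} is proved identically, with perfect matchings replacing permutations, the extra trivial character at $p=2$, and self-loops tracked through the elementary estimates.

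\textbf{Main obstacle.}
The crux is the second step: one needs the cancellation bound for $S(\bmt)$ to be \emph{simultaneously} (i) strong enough in $r(\bmt)$ to beat the $\approx p^n$ nontrivial characters against the main term of size $p^{1-n}\abs{\mathsf M_{n,d}}$, and (ii) uniform over the whole equidistributed range with a genuine $\oo(1)$, not merely $\OO(1)$, error --- it is this uniformity that pins down the admissible growth of $p$ and forces the exponent to degrade with $d$ through the factor $d$ in $d\,r(\bmt)$. Making the ``sampling without replacement'' local limit theorem for the permanent-type sums $S(\bmt)$ precise and uniform in both $\bmt$ and $\bmm$ --- and, in the undirected case, for matchings with self-loops --- is the technical heart; the large deviation step, by contrast, only invokes the soft sign of $\Phi$.
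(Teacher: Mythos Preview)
Your character-sum route over $\bF_p^n$ is a genuine alternative to the paper's argument. The paper never expands $N(\bmm)=p^{-n}\sum_{\bmt}S(\bmt)$; instead it proves an \emph{exact} random-walk identity (Propositions~2.1--2.2): for $\bmv$ with profile $(n_0,\dots,n_{p-1})$, the count $N(\bmm)$ equals $\prod_j(dn_j)!$ times the number of $n$-tuples in $\cU_{d,p}^n$ summing to $(dn_0,\dots,dn_{p-1})$, where $\cU_{d,p}\subset\bZ_{\geq0}^p$ is the multiset of count-vectors of zero-sum $d$-tuples in $\bF_p$. All the Fourier analysis then takes place on the torus $(\bR/\bZ)^p$ for a single i.i.d.\ random walk, and the large-deviation input is a concrete rate-function inequality (Proposition~3.3) proved by AM--GM on $\cU_{d,p}$. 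For undirected graphs the identity carries an extra sum over symmetric edge-profile matrices $M$, and the factor $1+\bm1(p=2)$ emerges in the paper from a lattice parity when that constrained sum is turned into a Riemann integral --- a different (though presumably dual) mechanism from your extra trivial character $\bmt=\bmv$. The payoff of the paper's identity is that it never has to bound the permanent-type sums $S(\bmt)$: the dependence among rows of $A$ is absorbed once and for all into the combinatorial prefactor $\prod_j(dn_j)!$.

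The gap you name is real and, as written, costs you the exponents. First, $|S(\bmt)|\leq|\mathsf M_{n,d}|(C\beta(\bmm))^{d\,r(\bmt)}$ is not a known inequality --- $S(\bmt)$ is a permanent of a block phase matrix, and for $r(\bmt)$ of order $n$ the ``nearly independent fibres'' heuristic is exactly the regularity constraint you cannot ignore. Second, even granting it, the balancing does not give $\min\{1/4,(d-2)/2d\}$: the $r=1$ term already forces $np(Cp\eta)^d=\oo(1)$, while near the uniform profile your potential $\Phi$ is only of order $\eta^2$, and matching these yields a strictly smaller admissible power of $n$. The paper escapes this because its LCLT outputs the correct Gaussian in $(n_0,\dots,n_{p-1})$ directly, so summing over profiles is a Riemann sum and no uniform pointwise asymptotic for $N(\bmm)$ is required. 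Two further points: the factor $(1+\bm1(p=2))$ should not appear in your $\mathsf M_{n,d}$ main term; and the sparse-$\bmv$ case (profile near $\bme_0$) is not disposed of by a generic localized-vector bound --- the paper exploits that every nontrivial element of $\cU_{d,p}$ has at least two nonzero coordinates, which is precisely what drives the exponent $(d-2)/2d$.
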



If an adjacency matrix $A(\cG)$ is singular as a matrix in $\bF_p$, then we have
\begin{align*}
|\{\bmv\in \bF_p^n\setminus{\bm0}: A(\cG)\bmv=\bm0 \}|\geq p-1.
\end{align*} 
Therefore it follows from Theorem \ref{thm:smcount},
\begin{align*}
&(p-1)|\{\cG\in \mathsf{M}_{n,d}: A(\cG) \text{ is singular in } \bF_p\}|
\leq \sum_{\bmv\in \bF_p^n\setminus{\bm0}}|\{\cG\in \mathsf{M}_{n,d}: A(\cG)\bmv=\bm0\}|
= (1+\oo(1))|\mathsf{M}_{n,d}|,\\
&(p-1)|\{\cG\in \mathsf{G}_{n,d}: A(\cG) \text{ is singular in } \bF_p\}|
\leq \sum_{\bmv\in \bF_p^n\setminus{\bm0}}|\{\cG\in \mathsf{G}_{n,d}: A(\cG)\bmv=\bm0\}|
= (1+\oo(1))|\mathsf{G}_{n,d}|,
\end{align*}
and we obtain the next theorem.

\begin{theorem}\label{thm:spFp}
Let $d\geq 3$ be a fixed integer, and an odd prime number $p$ such that $\gcd(p,d)=1$. Let $A$ be the adjacency matrix of a random $d$-regular  graph on $n$ vertices. Then as a random matrix in $\bF_p$, 
\begin{align*}
\bP(\text{$A$ is singular in $\bF_p$})\leq \frac{1+\oo(1)}{p-1},
\end{align*}
for $n$ sufficiently large.
\end{theorem}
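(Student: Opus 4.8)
The plan is to deduce Theorem \ref{thm:spFp} directly from the first-moment estimate in Theorem \ref{thm:smcount}, exploiting the linear-algebraic structure of the kernel over a field. The key observation is that over $\bF_p$ the singularity of $A(\cG)$ is not witnessed by a single bad null vector but forces the existence of an entire line of them: if $\det A(\cG)=0$ in $\bF_p$, then $\ker A(\cG)\subseteq\bF_p^n$ is a subspace of dimension at least one, hence contains at least $p$ vectors and therefore at least $p-1$ nonzero vectors. This is exactly the feature that makes the finite-field approach pay off, since it converts an $O(1/p)$-type count of null vectors into an $O(1/p)$-type bound on the singularity probability.

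First I would record, by double counting the pairs $(\cG,\bmv)$ with $A(\cG)\bmv=\bm0$ and $\bmv\neq\bm0$,
\begin{align*}
(p-1)\,\big|\{\cG\in \mathsf{M}_{n,d}: A(\cG)\text{ singular in }\bF_p\}\big|
\leq \sum_{\bmv\in \bF_p^n\setminus\{\bm0\}}\big|\{\cG\in \mathsf{M}_{n,d}: A(\cG)\bmv=\bm0\}\big|,
\end{align*}
together with the analogous inequality for $\mathsf{G}_{n,d}$; in each case every singular $\cG$ contributes at least $p-1$ pairs to the right-hand side. Next I would apply Theorem \ref{thm:smcount}: in the directed case the right-hand side equals $(1+\oo(1))|\mathsf{M}_{n,d}|$, so dividing through by $(p-1)|\mathsf{M}_{n,d}|$ gives $\bP(A\text{ singular in }\bF_p)\leq(1+\oo(1))/(p-1)$. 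In the undirected case the right-hand side is $(1+\bm1(p=2)+\oo(1))|\mathsf{G}_{n,d}|$; since the theorem assumes $p$ is an odd prime the indicator vanishes and the same conclusion follows. I would also check the admissible range: the bound is valid precisely when $p\ll n^{\min\{1/4,(d-2)/(2d)\}}$ (directed) or $p\ll n^{\min\{1/8,(d-2)/(5d-6)\}}$ (undirected), which in particular allows any fixed $p$ coprime to $d$ once $n$ is large.

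The genuine difficulty does not lie in this deduction, which is essentially a union bound sharpened by the subspace structure over $\bF_p$, but in establishing Theorem \ref{thm:smcount}: controlling $\sum_{\bmv\neq\bm0}|\{\cG:A(\cG)\bmv=\bm0\}|$ requires partitioning $\bF_p^n\setminus\{\bm0\}$ into equidistributed and non-equidistributed null-vector candidates, counting configurations annihilating an equidistributed $\bmv$ via a local central limit theorem, and discarding the non-equidistributed ones via a large deviation estimate. Granting that result, the passage to Theorem \ref{thm:spFp} is immediate, and is exactly the computation displayed above.
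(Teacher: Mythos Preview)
Your proposal is correct and matches the paper's own argument essentially verbatim: the paper also observes that singularity over $\bF_p$ forces at least $p-1$ nonzero null vectors, bounds $(p-1)$ times the count of singular $\cG$ by $\sum_{\bmv\neq\bm0}|\{\cG:A(\cG)\bmv=\bm0\}|$, and then invokes Theorem~\ref{thm:smcount}, with the oddness of $p$ killing the $\bm1(p=2)$ term in the undirected case. Nothing further is needed.
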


The entries of $A(\cG)$ are all integers. Therefore, if $A(\cG)$ is singular in $\bR$, it is also singular in any finite field $\bF_p$. The next theorem follows  by taking $p$ large in Theorem \ref{thm:spFp}.

\begin{theorem}\label{thm:spR}
Let $d\geq 3$ be a fixed integer. Let $A$ be the adjacency matrix of a random $d$-regular directed or undirected graph on $n$ vertices. Then there exist constants $\fd>0$, 
\begin{align*}
\bP(\text{$A$ is singular in $\bR$})\leq n^{-\fd},
\end{align*}
for $n$ sufficiently large. 
\end{theorem}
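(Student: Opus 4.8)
\textbf{Proof proposal for Theorem \ref{thm:spR}.} The plan is to deduce the statement over $\bR$ from the finite-field estimate of Theorem \ref{thm:spFp} by choosing the prime $p$ as large as the hypotheses permit. Since every entry of $A(\cG)$ is a nonnegative integer, if $A$ is singular over $\bR$ then $\det A = 0$ as an integer, hence $\det A \equiv 0 \Mod p$ for every prime $p$, so $A$ is singular over $\bF_p$ as well. Consequently, for any admissible $p$,
\begin{align*}
\bP(\text{$A$ is singular in $\bR$}) \leq \bP(\text{$A$ is singular in $\bF_p$}) \leq \frac{1+\oo(1)}{p-1}.
\end{align*}

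The remaining point is to pick $p$ growing with $n$. Theorem \ref{thm:spFp} requires $p$ to be an odd prime with $\gcd(p,d)=1$ and (through Theorem \ref{thm:smcount}) $p \ll n^{\min\{1/8,(d-2)/(5d-6)\}}$ in the undirected case (and the weaker $p\ll n^{\min\{1/4,(d-2)/2d\}}$ in the directed case). Set $\beta \deq \tfrac12\min\{1/8,(d-2)/(5d-6)\}>0$, which depends only on $d$, and let $p = p(n)$ be the largest prime not exceeding $n^{\beta}$. By Bertrand's postulate (or the prime number theorem) such a prime satisfies $p \geq n^{\beta}/2$ for $n$ large, and $p \to \infty$; moreover $p$ is odd for $n$ large, and since $d$ has only finitely many prime divisors while $p\to\infty$ we have $\gcd(p,d)=1$ for $n$ large. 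Thus all hypotheses of Theorem \ref{thm:spFp} hold with this choice, and
\begin{align*}
\bP(\text{$A$ is singular in $\bR$}) \leq \frac{1+\oo(1)}{p-1} \leq \frac{2(1+\oo(1))}{n^{\beta}} \leq n^{-\fd}
\end{align*}
for any fixed $\fd < \beta$ and $n$ sufficiently large. Taking, say, $\fd = \beta/2$ gives the claimed bound with $\fd = \tfrac14\min\{1/8,(d-2)/(5d-6)\}$, which is a positive constant depending only on $d$.

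There is essentially no obstacle here: the content is entirely in Theorem \ref{thm:smcount}, and this last theorem is a soft consequence. The only mild subtlety is ensuring the chosen prime simultaneously meets the growth bound, the oddness requirement, and the coprimality-to-$d$ requirement; all three are handled for large $n$ by taking $p$ to be a prime of size $\asymp n^{\beta}$ with $\beta$ strictly below the threshold exponent, using a standard density-of-primes argument. One should also note that the $\oo(1)$ in Theorem \ref{thm:spFp} is uniform in $p$ within the allowed range (this is how it is stated via Theorem \ref{thm:smcount}), so letting $p=p(n)\to\infty$ at a polynomial rate is legitimate and does not interfere with the error term.
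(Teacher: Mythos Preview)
Your proposal is correct and follows essentially the same approach as the paper: the paper simply observes that integer entries force $\det A\in\bZ$, so singularity over $\bR$ implies singularity over every $\bF_p$, and then deduces Theorem~\ref{thm:spR} by taking $p$ large in Theorem~\ref{thm:spFp}. Your additional care about Bertrand's postulate, oddness, and $\gcd(p,d)=1$ fills in routine details the paper leaves implicit; the only (harmless) slack is that you use the undirected exponent $\min\{1/8,(d-2)/(5d-6)\}$ in both cases, whereas the paper records the sharper directed exponent $\min\{1/4,(d-2)/2d\}$ separately.
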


\begin{remark}
For random $d$-regular directed graphs we can take $\fd=\min\{1/4, (d-2)/2d\}$, and for random $d$-regular undirected graphs we can take $\fd=\min\{1/8, (d-2)/(5d-6)\}$.The probability that the adjacency matrix of a random $d$-regular graph is singular is at least polynomial in $1/n$. In fact, if a $d$-regular graph contains the subgraph in Figure \ref{f:K2d},
\begin{figure}
\center
\includegraphics[width=0.6\textwidth]{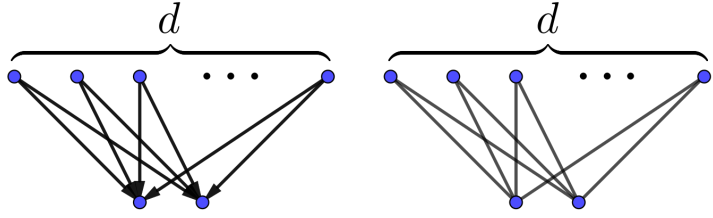}
\caption{If a $d$-regular directed or undirected graph contains the above subgraph, its adjacency matrix is singular.}
\label{f:K2d}
\end{figure}
then its adjacency matrix is singular. As a consequence, it holds that $\bP(\text{$A$ is singular in $\bR$})\geq \OO(1)/n^{d-2}$.
\end{remark}

\begin{remark}
Theorem \ref{thm:spR} for random $d$-regular graphs with even number of vertices was recently proven by M{\'e}sz{\'a}ros \cite{AM}, and by 
Nguyen and Wood \cite{NW}. Their approach studies the distribution of the sandpile group of random $d$-regular graphs.
Although the result in Theorem \ref{thm:spR} is stated for the configuration model, the same statement holds for other models, e.g. the uniform model, the sum of $d$ random permutations and the sum of $d$ random perfect matching matrices,  by a contiguity argument.
\end{remark}

\section{Random Walk Interpretation}
In this section, we enumerate $|\{\cG\in \mathsf{M}_{n,d}: A(\cG)\bmv=\bm0 \text{ in } \bF_p\}|$ and  $|\{\cG\in \mathsf{G}_{n,d}: A(\cG)\bmv=\bm0 \text{ in } \bF_p\}|$ as the number of certain walk paths. Before stating the result, we need to introduce some notations. We define the counting function $\Phi:\cup_{k\geq 1}\bF_p^k\mapsto \bZ^p$, given by 
\begin{align*}
\Phi(a_1,a_2,\cdots, a_k)=\left(\sum_{i=1}^k \bm 1(a_i=0),\sum_{i=1}^k \bm 1(a_i=1), \cdots,\sum_{i=1}^k \bm 1(a_i=p-1)\right).
\end{align*}
We decompose the space $\bF_p^n$ as 
\begin{align*}
\bF_p^n=\bigcup_{n_0,n_1,\cdots, n_{p-1}\in \bZ_{\geq0}\atop n_0+n_1+\cdots+n_{p-1}=n}\cS(n_0,n_1,\cdots, n_{p-1}),
\end{align*}
where 
\begin{align*}
\cS(n_0,n_1,\cdots, n_{p-1})=\{\bmv=(v_1,v_2,\cdots, v_n)\in \bF_p^n: \Phi(\bmv)=(n_0,n_1,\cdots,n_{p-1})\}.
\end{align*}
The cardinality of $\cS(n_0,n_1,\cdots, n_{p-1})$ is 
\begin{align*}
\left|\cS(n_0,n_1,\cdots, n_{p-1})\right|={n\choose n_0,n_1,\cdots,n_{p-1}}.
\end{align*}
%
%
We define the multiset $\cU_{d,p}$
\begin{align}\begin{split}\label{e:defcU}
\cU_{d,p}&=\{\Phi(\bma): \bma=(a_1,a_2,\cdots,a_d)\in \bF_p^d, a_1+a_2+\cdots+a_d=0\}.
\end{split}\end{align}
For any $a_1,a_2,\cdots, a_{d-1}\in \bF_p$, there exists a unique $a_d\in\bF_p$ such that $a_1+a_2+\cdots+a_d=0$. The multiset $\cU_{d,p}$ has cardinality $p^{d-1}$, i.e. $|\cU_{d,p}|=p^{d-1}$. 
We denote $\cM(n_0,n_1,\cdots, n_{p-1})$ the set of $p\times p$ symmetric matrices $M=[m_{ij}]_{0\leq i,j\leq p-1}$, such that
\begin{enumerate}
\item $m_{ij}=m_{ji}\in \bZ_{\geq 0}$ for $0\leq i,j\leq p-1$ and $2|m_{ii}$ for $0\leq i\leq p-1$.
\item $\sum_{j=0}^{p-1}m_{ij}=dn_i$ for $0\leq i\leq p-1$.
\item  $\sum_{j=0}^{p-1}jm_{ij}\equiv 0 \Mod p$ for $0\leq i\leq p-1$.
\end{enumerate}
We denote 
\begin{align*}
\cM=\bigcup_{n_0,n_1,\cdots,n_{p-1}\in \bZ_{\geq 0}, n_0<n\atop n_0+n_1+\cdots+n_{p-1}=n}\cM(n_0, n_1, \cdots, n_{p-1}).
\end{align*}


\begin{proposition}\label{p:walkrepd}
Let $d\geq 3$ be a fixed integer, and $p$ a prime number. Fix $\bmv\in \cS(n_0,n_1,\cdots, n_{p-1})$, we have
\begin{align}\begin{split}\label{e:randomwalkd}
&\phantom{{}={}}|\{\cG\in \mathsf{M}_{n,d}: A(\cG)\bmv=\bm0 \text{ in } \bF_p\}|\\
&=\left(\prod_{j=0}^{p-1} (dn_j)!\right)|\{(\bmu_1,\bmu_2\cdots, \bmu_n)\in \cU_{d,p}^n: \bmu_1+\bmu_2+\cdots+\bmu_n=(dn_0, dn_{1},\cdots, dn_{p-1})\}|\\
&=\left(\prod_{j=0}^{p-1} (dn_j)!\right)p^{n(d-1)}\bP(X_1+X_2+\cdots+X_n=(dn_0, dn_{1},\cdots, dn_{p-1})),
\end{split}\end{align}
where $X_1,X_2,\cdots, X_n$ are independent copies of $X$, which is uniform distributed over $\cU_{d,p}$.
\end{proposition}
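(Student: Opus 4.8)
The plan is to convert the linear-algebraic event $A(\cG)\bmv=\bm0$ over $\bF_p$ into a purely combinatorial constraint on the uniformly random permutation $\cP$ that defines $\cG$, and then to enumerate the admissible permutations by a ``local type'' statistic attached to each vertex. First I would color every point of the fiber $F_\ell$ by $v_\ell\in\bF_p$, so that there are exactly $dn_i$ points of color $i$. Unravelling the definition of the adjacency matrix, the $k$-th coordinate of $A(\cG)\bmv$ is the sum over $k'\in F_k$ of the color of the point $\cP(k')$; hence $A(\cG)\bmv=\bm0$ in $\bF_p$ holds iff for every vertex $k$ the multiset of colors of $\cP(F_k)$ has sum $\equiv 0\pmod p$, equivalently iff its counting vector $\Phi(\text{colors of }\cP(F_k))$ lies in $\cU_{d,p}$. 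Since distinct permutations give distinct elements of $\mathsf{M}_{n,d}$ (recall $|\mathsf{M}_{n,d}|=(nd)!$), counting such graphs is the same as counting such permutations.

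Next I would stratify the admissible permutations according to the tuple of types $(\bmu_1,\dots,\bmu_n)$, where $\bmu_k\deq\Phi(\text{colors of }\cP(F_k))\in\bZ_{\geq 0}^p$. Because each fiber carries $d$ points and each point of color $i$ is the image of exactly one source point, such a tuple is realized by at least one permutation precisely when every $\bmu_k$ is a composition of $d$ obeying the zero-sum condition (that is, $\bmu_k\in\cU_{d,p}$) and $\bmu_1+\cdots+\bmu_n=(dn_0,dn_1,\dots,dn_{p-1})$. For one fixed such tuple, a permutation realizing it is produced by a two-stage choice: (i) for each color $i$, distribute the $dn_i$ points of color $i$ among the fibers so that fiber $k$ receives $u_k^{(i)}$ of them, which can be done in $(dn_i)!/\prod_{k=1}^n u_k^{(i)}!$ ways; (ii) for each fiber $k$, choose the bijection from its $d$ source points onto the $d$ points it has received, in $d!$ ways. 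One checks that this is a bijection onto the set of permutations with the prescribed type tuple, so the number of such permutations is $\big(\prod_{j=0}^{p-1}(dn_j)!\big)\prod_{k=1}^n\big(d!/\prod_{i=0}^{p-1} u_k^{(i)}!\big)$.

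Summing over admissible tuples, and using that $d!/\prod_i u_k^{(i)}!=\binom{d}{\bmu_k}$ is precisely the multiplicity of $\bmu_k$ in the multiset $\cU_{d,p}$ — it is the number of ordered $\bma\in\bF_p^d$ with $\Phi(\bma)=\bmu_k$, and every such $\bma$ satisfies $\sum_i a_i\equiv\sum_i i\,u_k^{(i)}\equiv 0$ — one identifies $\sum_{(\bmu_k)}\prod_k\binom{d}{\bmu_k}$ with the multiset cardinality $|\{(\bmu_1,\dots,\bmu_n)\in\cU_{d,p}^n:\sum_k\bmu_k=(dn_0,\dots,dn_{p-1})\}|$, which is the first claimed equality. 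The second is then immediate: since $|\cU_{d,p}|=p^{d-1}$, dividing and multiplying that cardinality by $|\cU_{d,p}|^n=p^{n(d-1)}$ rewrites it as $p^{n(d-1)}\bP(X_1+\cdots+X_n=(dn_0,\dots,dn_{p-1}))$ with $X_1,\dots,X_n$ i.i.d.\ uniform on $\cU_{d,p}$.

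The only step that calls for genuine care is the bijection underlying (i)–(ii): one must check that reconstructing $\cP$ from the colorwise distribution of images to the fibers together with the within-fiber bijections yields a well-defined permutation whose type tuple is exactly the one prescribed, and that this procedure reaches every admissible permutation exactly once (no omission, no double counting). Once that correspondence is pinned down, the rest — translating the matrix equation into the color-sum condition, and reorganizing the product of factorials first into multinomial coefficients and then into a probability via $|\cU_{d,p}|^n=p^{n(d-1)}$ — is routine bookkeeping.
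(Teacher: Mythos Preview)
Your proposal is correct and follows essentially the same approach as the paper. The paper organizes the count by first passing to the map $f_\cP:\cup_k F_k\to\bF_p$ sending each source point to the color of its image (exactly your ``color of $\cP(k')$''), observes directly that each valid $f$ lifts to $\prod_j(dn_j)!$ permutations, and then identifies the valid $f$'s with the multiset $\cU_{d,p}^n$; you instead stratify by the counting-vector tuple and recover the same factorization via your two-stage construction (i)--(ii), which amounts to splitting the paper's one-step count $\prod_j(dn_j)!$ per $f$ into ``distribute colored targets to fibers'' times ``bijection within each fiber'' and then reabsorbing the multinomials $\binom{d}{\bmu_k}$ as the multiset multiplicities.
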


\begin{proposition}\label{p:walkrepu}
Let $d\geq 3$ be a fixed integer, and $p$ a prime number. Fix $\bmv\in \cS(n_0,n_1,\cdots, n_{p-1})$, we have
\begin{align}\begin{split}\label{e:randomwalku}
&\phantom{{}={}}|\{\cG\in \mathsf{G}_{n,d}: A(\cG)\bmv=\bm0 \text{ in } \bF_p\}|\\
&=\sum_{M\in \cM(n_0, n_1,\cdots, n_{p-1})}\prod_{0\leq i<j\leq p-1}m_{ij}!\prod_{i=0}^{p-1}\frac{m_{ii}!}{2^{m_{ii}/2}(m_{ii}/2)!}\\
&\times\prod_{i=0}^{p-1}|\{(\bmu_1,\bmu_2\cdots, \bmu_{n_i})\in \cU_{d,p}^{n_i}: \bmu_1+\bmu_2+\cdots\bmu_{n_i}=(m_{i0},m_{i1},\cdots, m_{ip-1}) \}|\\
&=\sum_{M\in \cM(n_0, n_1,\cdots, n_{p-1})}\prod_{0\leq i<j\leq p-1}m_{ij}!\prod_{i=0}^{p-1}\frac{m_{ii}!}{2^{m_{ii}/2}(m_{ii}/2)!}p^{n(d-1)}\\
&\times\prod_{i=0}^{p-1}\bP(X_1+X_2+\cdots+X_{n_i}=(m_{i0}, m_{i1},\cdots, m_{ip-1})),
\end{split}\end{align}
where $X_1,X_2,\cdots, X_{n_i}$ are independent copies of $X$, which is uniform distributed over $\cU_{d,p}$.
\end{proposition}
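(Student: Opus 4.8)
The plan is to establish both identities by an explicit stratification of the pairing (resp.\ permutation) underlying a configuration in $\mathsf G_{n,d}$ (resp.\ $\mathsf M_{n,d}$). Colour each of the $d$ points of the fiber $F_k$ by the field element $v_k\in\bF_p$; since $\bmv\in\cS(n_0,\dots,n_{p-1})$, there are exactly $dn_i$ points of colour $i$. For a pairing $\cP$ one has $(A(\cG)\bmv)_k=\sum_{k'\in F_k}v_{\ell(k')}$, where $\ell(k')$ is the vertex whose fiber contains the $\cP$-partner of $k'$ (with the usual convention, so that a loop at $k$ contributes $2v_k$). Hence $A(\cG)\bmv=\bm0$ in $\bF_p$ is \emph{equivalent} to the condition that for every vertex $k$ the partner-colours of the points of $F_k$ sum to $0$ in $\bF_p$, i.e.\ their $\Phi$-vector lies in $\cU_{d,p}$; call such a fiber \emph{balanced}. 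Thus the left-hand sides of \eqref{e:randomwalkd} and \eqref{e:randomwalku} count the pairings/permutations all of whose fibers are balanced.

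For \eqref{e:randomwalku} I would next record, for such a pairing, the matrix $M=[m_{ij}]_{0\le i,j\le p-1}$ with $m_{ij}$ ($i\neq j$) the number of colour-$i$ points matched to colour-$j$ points and $m_{ii}$ the number of colour-$i$ points matched to colour-$i$ points. Then $M$ is symmetric, $m_{ii}$ is even, $\sum_j m_{ij}=dn_i$, and $\sum_j jm_{ij}=\sum_{k:\,v_k=i}\!\big(\sum_{k'\in F_k}\text{partner colour}\big)\equiv 0\Mod p$ by balance, so $M\in\cM(n_0,\dots,n_{p-1})$. Partitioning by $M$, it remains to count balanced pairings with a given $M$, which I would do via a bijection onto triples of data: (1) for each value-$i$ vertex $k$ a vector $\bma^{(k)}\in\bF_p^d$ with $a^{(k)}_1+\cdots+a^{(k)}_d=0$ and $\sum_{k:\,v_k=i}\Phi(\bma^{(k)})=(m_{i0},\dots,m_{i,p-1})$ for each $i$ (an assignment of a target colour to each point; the number of choices is $\prod_i\big|\{(\bmu_1,\dots,\bmu_{n_i})\in\cU_{d,p}^{n_i}:\ \bmu_1+\cdots+\bmu_{n_i}=(m_{i0},\dots,m_{i,p-1})\}\big|$); (2) for each $i<j$ a bijection between the $m_{ij}$ colour-$i$ points with target colour $j$ and the $m_{ij}$ colour-$j$ points with target colour $i$ ($\prod_{i<j}m_{ij}!$ choices); and (3) for each $i$ a perfect matching of the $m_{ii}$ colour-$i$ points with target colour $i$ ($\prod_i m_{ii}!/(2^{m_{ii}/2}(m_{ii}/2)!)$ choices). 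Gluing (2) and (3) yields a pairing whose partner-colours agree with (1) and with matrix $M$, and conversely a balanced pairing with matrix $M$ determines (1)--(3) uniquely; this gives the first equality in \eqref{e:randomwalku}. The second equality then follows from $|\cU_{d,p}|=p^{d-1}$, $n_0+\cdots+n_{p-1}=n$, and the definition of $X$.

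The directed identity \eqref{e:randomwalkd} is the same argument without the symmetry bookkeeping, hence without $M$: choosing a target-colour vector $\bmu_k\in\cU_{d,p}$ for each fiber subject to the forced constraint $\bmu_1+\cdots+\bmu_n=(dn_0,\dots,dn_{p-1})$ (a permutation sends exactly $dn_j$ points into colour-$j$ fibers), the number of permutations realizing a given target assignment is $\prod_j (dn_j)!$, and passing to $X$ uses $|\cU_{d,p}|=p^{d-1}$ as before.

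The genuinely delicate point is verifying that the undirected map is \emph{exactly} a bijection: that the two pools of points matched across a colour pair $\{i,j\}$ have equal size precisely because $M$ is symmetric; that $2^{m_{ii}/2}(m_{ii}/2)!$ is the number of orderings of a perfect matching on $m_{ii}$ labelled points, together with the parity constraint $2\mid m_{ii}$; and that no balanced pairing is assigned to two distinct matrices $M$. Loops and multi-edges require no separate treatment once the diagonal convention fixing $(A\bmv)_k=\sum_{k'\in F_k}v_{\ell(k')}$ is adopted.
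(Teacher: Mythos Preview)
Your proposal is correct and follows essentially the same route as the paper. Both arguments introduce the partner-colour assignment (the paper calls it the map $f_{\cP}:\cup_k F_k\to\bF_p$), stratify the balanced pairings by the data matrix $M=[m_{ij}]$, and then factor the count for a fixed $M$ as (number of balanced colour-assignments with data $M$) $\times$ (number of pairings realizing a given assignment); you phrase this as a single bijection onto triples (1)--(3), whereas the paper phrases it as a two-step count via \eqref{e:numpair} and \eqref{e:nummap}, but the content is identical.
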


\begin{proof}[Proof of Proposition \ref{p:walkrepd}]
We recall the configuration model for random $d$-regular directed graphs from the introduction that each vertex $k \in \{1,2,\cdots,n\}$ is associated with a fiber $F_k$ of $d$ points.  For each permutation $\cP$ of the $nd$ points, we associate it a map $f_{\cP}: \cup_{k\in \{1,2,\cdots, n\}}F_k\mapsto \bF_p$ in the following way. For any point $k'$, if $\cP(k')=\ell'$ and $\ell'\in F_\ell$, then $f_{\cP}(k')=v_\ell$. A given map $f: \cup_{k\in \{1,2,\cdots, n\}}F_k\mapsto \bF_p$ is from a permutation if
\begin{align}\label{e:permcond}
\sum_{k\in \{1,2,\cdots, n\}}\sum_{k'\in F_k}\bm1(f(k')=j)=dn_j, \quad i=0,1,\cdots, p-1.
\end{align}
If this is the case, the number of permutation $\cP$ such that $f_{\cP}=f$ is given by
\begin{align}\label{e:numpaird}
\prod_{j=0}^{p-1}(dn_j)!.
\end{align}
Let $\cG\in \mathsf{M}_{n,d}$ corresponding to a permutation $\cP$. $A(\cG)\bmv=\bm 0$ in $\bF_p$ if and only if for any $k\in \{1,2,\cdots, n\}$
\begin{align}\label{e:sum0d}
\sum_{k'\in F_k}f(k')=0.
\end{align}
Especially, $\Phi(f(k'):k'\in F_k) \in \cU_{d,p}$ for any $k\in \{1,2,\cdots, n\}$. The number of maps $f: \cup_{k\in \{1,2,\cdots, n\}}F_k\mapsto \bF_p$ satisfying \eqref{e:permcond} and \eqref{e:sum0d} is given by
\begin{align}\label{e:nummapd}
|\{(\bmu_1,\bmu_2\cdots, \bmu_n)\in \cU_{d,p}^n: \bmu_1+\bmu_2+\cdots+\bmu_n=(dn_0, dn_{1},\cdots, dn_{p-1})\}|.
\end{align}
The claim \eqref{e:randomwalkd} follows from \eqref{e:numpaird} and \eqref{e:nummapd}.
\end{proof}

\begin{proof}[Proof of Proposition \ref{p:walkrepu}]
We recall the configuration model for random $d$-regular undirected graphs  from the introduction that each vertex $k \in \{1,2,\cdots,n\}$ is associated with a fiber $F_k$ of $d$ points.  For each pairing $\cP$ of the $nd$ points, we associate it a map $f_{\cP}: \cup_{k\in \{1,2,\cdots, n\}}F_k\mapsto \bF_p$ in the following way. For any point $k'$, if $\{k',\ell'\}\in \cP$ and $\ell'\in F_\ell$, then $f_{\cP}(k')=v_\ell$. Given a map $f: \cup_{k\in \{1,2,\cdots, n\}}F_k\mapsto \bF_p$, we denote its data matrix as
\begin{align*}
M_f=[m_{ij}]_{0\leq i,j\leq p-1}, \quad m_{ij}=\sum_{k\in \{1,2,\cdots, n\}}\sum_{k'\in F_k}\bm1(v_k=i)\bm1(f(k')=j).
\end{align*}
The map $f: \cup_{k\in \{1,2,\cdots, n\}}F_k\mapsto \bF_p$ is from a pairing $\cP$ if for any $\{k', \ell'\}\in \cP$ with $k'\in F_k$ and $\ell'\in F_\ell$, it holds $f(k')=v_{\ell}$ and $f(\ell')=v_k$. This is possible, if and only if its data matrix $M_f=[m_{ij}]_{0\leq i,j\leq p-1}$ satisfies
\begin{enumerate}
\item $m_{ij}=m_{ji}\in \bZ_{\geq 0}$ for $0\leq i,j\leq p-1$ and $2|m_{ii}$ for $0\leq i\leq p-1$.
\item $\sum_{j=0}^{p-1}m_{ij}=dn_i$ for $0\leq i\leq p-1$.
\end{enumerate}
If this is the case, the number of pairings $\cP$ such that $f_{\cP}=f$ is given by
\begin{align}\label{e:numpair}
\prod_{0\leq i<j\leq p-1}m_{ij}!\prod_{i=0}^{p-1}\frac{m_{ii}!}{2^{m_{ii}/2}(m_{ii}/2)!}.
\end{align}
Let $\cG\in \mathsf{G}_{n,d}$ corresponding to a pairing $\cP$. $A(\cG)\bmv=\bm 0$ in $\bF_p$ if and only if for any $k\in \{1,2,\cdots, n\}$
\begin{align}\label{e:sum0}
\sum_{k'\in F_k}f(k')=0.
\end{align}
Especially, $\Phi(f(k'):k'\in F_k) \in \cU_{d,p}$ for any $k\in \{1,2,\cdots, n\}$. If this is the case, we have
\begin{align*}
0=\sum_{k\in \{1,2,\cdots, n\}}\bm1(v_k=i)\sum_{k'\in F_k}f(k')\equiv \sum_{j=0}^{p-1}jm_{ij} \Mod p,
\end{align*}
and thus the data matrix $M_f\in \cM(n_0,n_1,\cdots, n_{p-1})$. Given a data matrix $M\in \cM(n_0,n_1,\cdots, n_{p-1})$, the number of maps $f: \cup_{k\in \{1,2,\cdots, n\}}F_k\mapsto \bF_p$ with data matrix $M$ satisfying \eqref{e:sum0} is given by
\begin{align}\label{e:nummap}
\prod_{i=0}^{p-1}|\{(\bmu_1,\bmu_2\cdots, \bmu_{n_i})\in \cU_{d,p}^{n_i}: \bmu_1+\bmu_2+\cdots\bmu_{n_i}=(m_{i0},m_{i1},\cdots, m_{ip-1}) \}|.
\end{align}
The claim \eqref{e:randomwalku} follows from \eqref{e:numpair} and \eqref{e:nummap}.
\end{proof}

Let $X$ be a random vector uniform distributed over $\cU_{d,p}$. The mean of $X$ is given by
\begin{align}\label{e:mean}
\bE[X(j)]
=\frac{1}{p^{d-1}}\sum_{(a_1,a_2,\cdots,a_d)\in \bF_p^d\atop a_1+a_2+\cdots+a_d=0}\sum_{k=1}^d\bm1(a_k=j)=\frac{d}{p},\quad j=0,1,\cdots,p-1.
\end{align}
The covariance of $X$ is given by
\begin{align}\begin{split}\label{e:cov}
&\phantom{{}={}}\bE[(X(j)-d/p)(X(j')-d/p)]
=\frac{1}{p^{d-1}}\sum_{(a_1,a_2,\cdots,a_d)\in \bF_p^d\atop a_1+a_2+\cdots+a_d=0}\sum_{1\leq k,k'\leq d}\bm1(a_k=j)\bm1(a_{k'}=j')-\frac{d^2}{p^2}\\
&=\frac{1}{p^{d-1}}\sum_{(a_1,a_2,\cdots,a_d)\in \bF_p^d\atop a_1+a_2+\cdots+a_d=0}\left(\delta_{jj'}\sum_{1\leq k\leq d}\bm1(a_k=j)+\sum_{1\leq k\neq k'\leq d}\bm1(a_k=j)\bm1(a_{k'}=j')\right)-\frac{d^2}{p^2}=\frac{d}{p}\delta_{jj'}-\frac{d}{p^2},
\end{split}\end{align}
for any $j,j'=0,1,\cdots,p-1$.
We summarize \eqref{e:mean} and \eqref{e:cov} as
\begin{align}\label{e:meanandcov}
\bmmu\deq \bE[X]=(d/p,d/p,\cdots,d/p),\quad \Sigma\deq\bE[(X-\bmmu)(X-\bmmu)^t]=\frac{d}{p}I_p-\frac{d}{p^2}\bm1\bm1^t.
\end{align}
We denote the characteristic function of $X$ as
\begin{align*}
\phi_{X}(\bmt)=\bE[\exp\{\ri\langle\bmt, X\rangle\}],\quad \phi_{X-\bmmu}(\bmt)=\bE[\exp\{\ri\langle\bmt, X-\bmmu\rangle\}]=e^{-\ri \langle \bmt,\bmmu\rangle}\phi_{X}(\bmt).
\end{align*}
The lattice spanned by vectors in $\cU_{d,p}$ is the dual lattice of $\spn \{(0,1/p, 2/p,\cdots, p-1/p), \bme_1,\bme_2,\cdots, \bme_{p}\}$
in $\{(x_1, x_2,\cdots, x_p)\in \bR^p: x_1+x_2+\cdots+x_p=d\}$, where $\bme_1,\bme_2,\cdots, \bme_{p}$ is the standard base of $\bR^p$. Therefore $|\phi_{X-\bmmu}^n(\bmt)|=1$ if any only if 
\begin{align}\label{e:line}
\bmt\in2\pi(0,1/p,2/p,\cdots, p-1/p)\bZ+(1,1,\cdots,1)\bR.
\end{align}
The following proposition gives a quantitative estimate of the absolute value of the characteristic function $|\phi_{X-\mu}(\bmt)|$ when $\bmt$ is away from these lines \eqref{e:line}. 
Fix small $\delta>0$. For $j=0,1,2,\cdots,p-1$, we define domains 
\begin{align}\label{e:domainB}
B_j(\delta)=2\pi j(0,1/p,2/p,\cdots, (p-1)/p)+Q (\{\bmx\in \bR^{p-1}:\|\bmx\|_2^2\leq \delta\}\times [0,2\sqrt{p}\pi]), 
\end{align}
where $Q$ is an orthogonal transform, given by the $p\times p$ orthogonal matrix  $Q=[O, \bm1/\sqrt{p}]$, and $Q (\{\bmx\in \bR^{p-1}:\|\bmx\|_2^2\leq \delta\}\times [0,2\sqrt{p}\pi])$ is the image of $\{\bmx\in \bR^{p-1}:\|\bmx\|_2^2\leq \delta\}\times [0,2\sqrt{p}\pi]$ under the orthogonal transform $Q$.
\begin{proposition}\label{p:cfbound}
For any $\delta>0$ small enough, and $\bmt\in (2\pi \bR^p/\bZ^p)\setminus \cup_{j=0}^{p-1}B_j(\delta)$, there exists a constant $c(\delta)>0$,
\begin{align}\label{e:cfbound}
|\phi_{X-\bmmu}(\bmt)|\leq 1-c(\delta)/p^3.
\end{align}
\end{proposition}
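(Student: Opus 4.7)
The plan is to decompose $\phi_X(\bmt)$ via additive characters on $\bF_p$, apply a Parseval-type identity to reduce to a single auxiliary function, and then convert the geometric separation $\bmt\notin\cup_j B_j(\delta)$ into a quantitative pointwise bound. Note first that $|\phi_{X-\bmmu}(\bmt)|=|\phi_X(\bmt)|$, so it suffices to bound $|\phi_X|$. Enforcing the constraint $a_1+\cdots+a_d=0$ defining $\cU_{d,p}$ via the identity $\bm{1}(a_1+\cdots+a_d=0)=\frac{1}{p}\sum_{s=0}^{p-1} e^{2\pi\ri s(a_1+\cdots+a_d)/p}$ gives
\[
\phi_X(\bmt)=\sum_{s=0}^{p-1}\bigl[f_s(\bmt)\bigr]^d,\qquad f_s(\bmt)\deq\frac{1}{p}\sum_{a=0}^{p-1} e^{\ri(t_a+2\pi sa/p)}.
\]
Character orthogonality also yields the Parseval identity $\sum_{s=0}^{p-1}|f_s(\bmt)|^2=1$. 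Since $|f_s|\leq 1$ and $d\geq 3$,
\[
|\phi_X(\bmt)|\;\leq\;\sum_s|f_s(\bmt)|^d\;\leq\;\Bigl(\max_s|f_s(\bmt)|\Bigr)^{d-2}\sum_s|f_s(\bmt)|^2\;=\;\Bigl(\max_s|f_s(\bmt)|\Bigr)^{d-2},
\]
reducing the task to a uniform pointwise bound on $|f_s(\bmt)|$.

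Fix $s$ and write $\theta_a=t_a+2\pi sa/p$. Expanding $|f_s|^2$ and applying $1-\cos x\geq 2\|x\|_{2\pi\bZ}^2/\pi^2$, where $\|x\|_{2\pi\bZ}$ denotes the distance of $x$ to the nearest multiple of $2\pi$, gives
\[
1-|f_s(\bmt)|^2\;=\;\frac{2}{p^2}\sum_{a<a'}\bigl(1-\cos(\theta_a-\theta_{a'})\bigr)\;\geq\;\frac{4}{\pi^2 p^2}\sum_{a<a'}\|\theta_a-\theta_{a'}\|_{2\pi\bZ}^2.
\]
The equality $|f_s(\bmt)|=1$ holds precisely when all $\theta_a$ agree mod $2\pi$, i.e., when $\bmt\equiv c\bm{1}-2\pi s(0,1/p,\ldots,(p-1)/p)\pmod{2\pi\bZ^p}$ for some $c\in\bR$; for $s\equiv-j\pmod p$ this is exactly the central line of $B_j(\delta)$. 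The hypothesis $\bmt\notin\cup_j B_j(\delta)$ therefore guarantees that for every $s$ the shortest representative $\bmv\in\bm{1}^\perp$ of $\bmt+2\pi s(0,1/p,\ldots,(p-1)/p)$ modulo $\bR\bm{1}+2\pi\bZ^p$ has $\|\bmv\|\geq\delta$, and the shortness property forces $|v_a|\leq\pi(1-1/p)$ (otherwise subtracting the lattice vector $2\pi\bme_a-(2\pi/p)\bm{1}\in\bm{1}^\perp$ would strictly shorten $\bmv$).

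The main obstacle is to establish a uniform-in-$p$ lower bound $\sum_{a<a'}\|v_a-v_{a'}\|_{2\pi\bZ}^2\geq c(\delta)$ under the above constraints. I would split into two regimes. If every pairwise difference lies in $[-\pi,\pi]$, then $\|v_a-v_{a'}\|_{2\pi\bZ}=|v_a-v_{a'}|$ and $\sum_{a<a'}|v_a-v_{a'}|^2=p\|\bmv\|^2\geq p\delta^2$. In the wrap-around regime, the bound $|v_a|\leq\pi(1-1/p)$ forces $\|v_a-v_{a'}\|_{2\pi\bZ}\geq 2\pi/p$ whenever $|v_a-v_{a'}|>\pi$; combined with the zero-mean constraint $\sum_a v_a=0$, which prevents too many coordinates from accumulating near $\pm\pi$, and with $\|\bmv\|\geq\delta$, an elementary combinatorial count shows that the resulting $(2\pi/p)^2$ contributions aggregate to a $p$-independent constant $c(\delta)>0$. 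Substituting back yields $1-|f_s(\bmt)|^2\geq c(\delta)/p^2$, hence $\max_s|f_s(\bmt)|\leq 1-c(\delta)/(2p^2)$, and raising to the $(d-2)$-th power gives $|\phi_{X-\bmmu}(\bmt)|\leq 1-c(\delta)/p^3$, as required.
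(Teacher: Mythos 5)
Your approach is genuinely different from the paper's and, once completed, is both shorter and quantitatively stronger. The paper argues by contradiction: assuming $|\phi_X(\bmt)|\geq 1-c(\delta)/p^3$, it shows the phases $\langle\bmt,\bmw\rangle$ are nearly constant mod $2\pi$ over a large part of $\cU_{d,p}$, extends this to all of $\cU_{d,p}$ via zero-sum $d\times d$ arrays, and then uses carefully chosen test vectors $\bmu_k,\bmv_k\in\cU_{d,p}$ to propagate the constraint and force $\bmt\in B_0(\delta)$. Your decomposition $\phi_X(\bmt)=\sum_{s=0}^{p-1}f_s(\bmt)^d$ via additive characters of $\bF_p$, together with Parseval $\sum_s|f_s|^2=1$ and the trivial inequality $\sum_s|f_s|^d\leq(\max_s|f_s|)^{d-2}$, reduces everything to a one-dimensional pointwise estimate and bypasses the array construction entirely. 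You also correctly identify the geometric content of the $B_j$'s: the central line of $B_j$ for $j\equiv-s$ is precisely where $|f_s|=1$, so avoiding $\cup_jB_j(\delta)$ forces the shortest representative $\bmv\in\bm1^\perp$ of $\bmt+2\pi s(0,1/p,\ldots,(p-1)/p)$ mod $\bR\bm1+2\pi\bZ^p$ to satisfy $\|\bmv\|_2^2>\delta$ (note: the paper's $B_j$ uses $\|\bmx\|_2^2\leq\delta$, so the correct conclusion is $\|\bmv\|^2>\delta$, not $\|\bmv\|\geq\delta$).

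The one genuine gap is the final ``elementary combinatorial count.'' You invoke the minimality of $\bmv$ only to get $|v_a|\leq\pi(1-1/p)$ by testing against the single generators $2\pi\bme_a-(2\pi/p)\bm1$, and then attempt to deduce $\sum_{a<a'}\|v_a-v_{a'}\|_{2\pi\bZ}^2\geq c(\delta)$ by an unspecified count; as stated this is not a proof, and the split into ``no wrap-around'' versus ``wrap-around'' leaves the latter unresolved. The fix is to use minimality against the \emph{entire} lattice rather than just the generators. For fixed $a'$,
\begin{align*}
\sum_{a=0}^{p-1}\|v_a-v_{a'}\|_{2\pi\bZ}^2
=\min_{\bmk\in\bZ^p}\norm{\bmv-v_{a'}\bm1-2\pi\bmk}_2^2
\geq\min_{\bmk\in\bZ^p}\norm{P^\perp(\bmv-2\pi\bmk)}_2^2
\geq\norm{\bmv}_2^2>\delta,
\end{align*}
where $P^\perp$ is orthogonal projection onto $\bm1^\perp$ and the last inequality is exactly the statement that $\bmv$ is a shortest representative modulo $\bR\bm1+2\pi\bZ^p$. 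Summing over $a'$ gives $\sum_{a<a'}\|v_a-v_{a'}\|_{2\pi\bZ}^2\geq p\delta/2$, with no case split at all, hence $1-|f_s|^2\geq 2\delta/(\pi^2p)$ uniformly in $s$, and then $|\phi_{X-\bmmu}(\bmt)|\leq(1-\delta/(\pi^2p))^{d-2}\leq 1-c(\delta)/p$ for $d\geq 3$. This is not only a complete argument but actually yields the sharper bound $1-c(\delta)/p$ in place of the paper's $1-c(\delta)/p^3$; you should state this, since it simplifies the error bookkeeping downstream.
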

\begin{proof}
We prove \eqref{e:cfbound} by contradiction: if for some $\bmt$,
\begin{align}\label{e:cfbound2}
\left|\frac{1}{p^{d-1}}\sum_{j=1}^{p^{d-1}}e^{\ri \langle \bmt, \bmw_j\rangle}\right|\geq 1-c(\delta)/p^3,
\end{align}
then $\bmt\in \cup_{j=0}^{p-1}B_j(\delta)$.
Without loss of generality, we can assume (by shifting $\bmt$) that $t_0=0$ and $|t_1|\leq 1/2p$. We denote $\psi=\arg \phi_X(\bmt)$. There exists a subset $\cU'\in \cU_{d,p}$, with $|\cU'|\leq p^{d-2}/(2d)$, such that for any $\bmw \in \cU_{d,p}\setminus \cU'$, it holds 
\begin{align}\label{e:smalldev}
\langle \bmt, \bmw\rangle=2\pi n_\bmw+\psi+\varepsilon_\bmw, \quad |\varepsilon_\bmw|\leq \frac{\varepsilon}{p},
\end{align}
where $1-\cos(\varepsilon/p)=2dc(\delta)/p^2$. Otherwise, we have
\begin{align*}
\left|\frac{1}{p^{d-1}}\sum_{j=1}^{p^{d-1}}e^{\ri \langle \bmt, \bmw_j\rangle}\right|
&=\Re \frac{1}{p^{d-1}}\sum_{j=1}^{p^{d-1}}e^{\ri \langle \bmt, \bmw_j\rangle-\ri\psi}\\
&< \frac{1}{p^{d-1}}\left(|\cU_{d,p}\setminus \cU'|+\sum_{\bmw\in \cU'} \cos(\varepsilon/p)\right)
=1-\frac{c(\delta)}{p^3},
\end{align*}
which contradicts with \eqref{e:cfbound2}. We show next that \eqref{e:smalldev} holds for all $\bmw\in \cU_{d,p}$ with slightly worse error. We consider $d\times d$ arrays in $\bF_p$ such that the sum of each row and column is zero (zero sum $d\times d$ arrays). Fix any $\bma^1=(a^1_1,a^1_2,\cdots, a^1_d)\in \bF_p^d$ with $a^1_1+a^1_2+\cdots+a^1_d=0$. The total number of zero sum $d\times d$ arrays with the first row given by $\bma^1$ is $p^{(d-1)(d-2)}$. For any $\bmb=(b_1,b_2,\cdots, b_d)\in \bF_p^d$ with $b_1+b_2+\cdots+b_d=0$ and $\bmb\neq \bma^1$, the total number of zero sum $d\times d$ arrays with the first row given by $\bma^1$ and one row or column given by $\bmb$ is at most $(d-1)p^{(d-1)(d-3)}+dp^{(d-2)(d-2)}$. Since $p^{(d-1)(d-2)}> ((d-1)p^{(d-1)(d-3)}+dp^{(d-2)(d-2)})|\cU'|$, there exists a zero sum $d\times d$ arrays with rows given by $\bma^1, \bma^2,\cdots, \bma^d$ and columns given by $\bmb^1, \bmb^2,\cdots,\bmb^d$ such that $\Phi(\bma^2), \Phi(\bma^3),\cdots, \Phi(\bma^d), \Phi(\bmb^1), \Phi(\bmb^2),\cdots, \Phi(\bmb^d)\in \cU_{b,p}\setminus \cU'$. As a consequence, we have
\begin{align*}\begin{split}
\langle \bmt, \Phi(\bma^1)\rangle 
&=
\sum_{i=1}^d\langle \bmt, \Phi(\bmb^i)\rangle 
-\sum_{i=2}^d\langle \bmt, \Phi(\bma^i)\rangle\\
&=2\pi \left(\sum_{i=1}^d n_{\Phi(\bmb^i)}-\sum_{i=2}^d n_{\Phi(\bma^i)}\right)+\psi
+\left(\sum_{i=1}^d \varepsilon_{\Phi(\bmb^i)}-\sum_{i=2}^d \varepsilon_{\Phi(\bma^i)}\right)\\
&\eqd2\pi n_{\Phi(\bma^1)}+\psi
+\varepsilon_{\Phi(\bma^1)},
\end{split}\end{align*}
where $|\varepsilon_{\Phi(\bma^1)}|\leq 2d\varepsilon/p$. Especially if we take $\bma^1=(0,0,\cdots,0)$ then $\langle \bmt, \Phi(\bma^1)\rangle=0$, and we get $|\psi|\leq 2d\varepsilon/p$. We can absorb $\psi$ into the error term. Therefore, uniformly for all $\bmw\in \cU_{d,p}$, 
\begin{align}\label{e:twprod}
\langle \bmt, \bmw\rangle=2\pi n_\bmw +\varepsilon_\bmw,\quad  |\varepsilon_\bmw|\leq \frac{4d\varepsilon}{p}.
\end{align}

We take two family of vectors $\bmu_k=(d-2)\bme_0+\bme_k+\bme_{p-k},\bmv_k=(d-3)\bme_0+\bme_1+\bme_{k-1}+\bme_{p-k}\in \cU_{d,p}$ for $k=2,3,\cdots, p-1$. By taking $\bmw=\bmu_k, \bmv_k$ in \eqref{e:twprod}, we get
\begin{align*}
t_k=t_{k-1}+t_1+2\pi(n_{\bmv_k}-n_{\bmu_k})+(\varepsilon_{\bmv_k}-\varepsilon_{\bmu_k}), \quad k=2,3,\cdots,p-1,
\end{align*}
and therefore,
\begin{align}\label{e:diffbound}
t_k=kt_1+2\pi\sum_{j=2}^k (n_{\bmv_j}-n_{\bmu_j})+\sum_{j=2}^k(\varepsilon_{\bmv_j}-\varepsilon_{\bmu_j}).
\end{align}
We can shift each $t_k$ by $2\pi \bZ$, and assume that 
\begin{align*}
t_k=kt_1+\sum_{j=2}^k(\varepsilon_{\bmv_j}-\varepsilon_{\bmu_j}).
\end{align*}
Therefore, thanks to the bound \eqref{e:twprod}, we get $|t_k|\leq k/2p+8(k-1)d\varepsilon/p\leq1$ for all $1\leq k\leq p-1$, provided we take $\varepsilon$ small enough. Let $k_{\max}=\argmax_k t_k$ and $k_{\min}=\argmin_k t_k$. By taking $\bmw_1=(d-2)\bme_0+\bme_{k_{\max}}+\bme_{p-k_{\max}}$ and $\bmw_2=(d-3)\bme_0+\bme_{k_{\min}}+\bme_{k_{\max}-k_{\min}}+\bme_{p-k_{\max}}$ in \eqref{e:twprod}, we get
\begin{align*}
3\geq |t_{k_{\max}}-t_{k_{\min}}-t_{k_{\max}-k_{\min}}|=|2\pi(n_{\bmw_2}-n_{\bmw_1})+(\varepsilon_{\bmw_2}-\varepsilon_{\bmw_1})|\geq 2\pi|n_{\bmw_2}-n_{\bmw_1}|-|\varepsilon_{\bmw_2}-\varepsilon_{\bmw_1}|.
\end{align*}
Therefore, $n_{\bmw_2}=n_{\bmw_1}$ and 
\begin{align*}
|t_{k_{\min}}|\leq |t_{k_{\max}}-t_{k_{\min}}-t_{k_{\max}-k_{\min}}|=|\varepsilon_{\bmw_2}-\varepsilon_{\bmw_1}|\leq \frac{8d\varepsilon}{p}.
\end{align*}
By symmetry, we also have $|t_{k_{\max}}|\leq 8d\varepsilon/p$. It follows that $|t_0|,|t_1|,\cdots, |t_{p-1}|\leq \max\{|t_{k_{\max}}|, |t_{k_{\min}}|\}\leq 8d\varepsilon/p$. So $\bmt\in B_0(\delta)$ and the claim follows, provided we take $\varepsilon$ small enough.
\end{proof}

\section{Proof of Theorem \ref{thm:smcount} for Random Directed $d$-regular Graphs}
Thanks to Proposition \ref{p:walkrepd}, we can rewrite the lefthand side of \eqref{e:smcountd} as
\begin{align*}
\sum_{\bmv\in \bF_p^n\setminus{\bm0}}|\{\cG\in \mathsf{M}_{n,d}: A(\cG)\bmv=\bm0\}|
&=\sum_{n_0,n_1,\cdots,n_{p-1}\in \bZ_{\geq 0}, n_0<n\atop n_0+n_1+\cdots+n_{p-1}=n}\sum_{\bmv\in \cS(n_0,n_1,\cdots, n_{p-1})} |\{\cG\in \mathsf{M}_{n,d}: A(\cG)\bmv=\bm0\}|\\
&=\sum_{n_0,n_1,\cdots,n_{p-1}\in \bZ_{\geq 0}, n_0<n\atop n_0+n_1+\cdots+n_{p-1}=n}{n\choose n_0,n_1,\cdots, n_{p-1}}\left(\prod_{j=0}^{p-1} (dn_j)!\right)\times\\
&\phantom{{}={}}\times p^{n(d-1)}\bP(X_1+X_2+\cdots+X_n=(dn_0, dn_{1},\cdots, dn_{p-1})).
\end{align*}
Therefore Theorem \ref{thm:smcount} is equivalent to the following estiamte
\begin{align}\begin{split}\label{e:keyestd}
&\sum_{n_0,n_1,\cdots,n_{p-1}\in \bZ_{\geq 0}, n_0<n\atop n_0+n_1+\cdots+n_{p-1}=n}{n\choose n_0,n_1,\cdots, n_{p-1}}{dn\choose dn_0,dn_1,\cdots, dn_{p-1}}^{-1}\times\\
&\times p^{n(d-1)}\bP(X_1+X_2+\cdots+X_n=(dn_0, dn_{1},\cdots, dn_{p-1}))=1+\oo(1).
\end{split}\end{align}

To prove \eqref{e:keyestd}, we fix a large number $\fb>0$, and decompose those $p$-tuples $(n_0,n_1,\cdots,n_{p-1})$ into two classes:
\begin{enumerate}
\item (Equidistributed) $\cE$ is the set of $p$-tuples $(n_0,n_1,\cdots,n_{p-1})\in \bZ_{\geq 0}^n$, such that $\sum_{j=0}^{p-1}(n_j/n-1/p)^2\leq \fb \ln n/n$.
\item (Non-equidistributed) $\cN$ is the set of $p$-tuples $(n_0,n_1,\cdots,n_{p-1})\in \bZ_{\geq 0}^n$, which are not $(n,0,0,\cdots, 0)$ or equidistributed.
\end{enumerate}
In Section \ref{subs:lcltd}, we estimate the sum of terms in \eqref{e:keyestd} corresponding to equidistributed $p$-tuples using a local central limit theorem. In Section \ref{subs:ldpd}, we show that the sum of terms in \eqref{e:keyestd} corresponding to non-equidistributed $p$-tuples is small, via a large deviation estimate. Theorem \ref{thm:smcount} for random directed $d$-regular graphs follows from combining Proposition \ref{p:lcltd} and Proposition \ref{p:ldpd}.

\subsection{Local central limit theorem estimate}
\label{subs:lcltd}

In this section, we estimate the sum of terms in \eqref{e:keyestd} corresponding to equidistributed $p$-tuples $(n_0,n_1,\cdots,n_{p-1})$, using a local central limit theorem.
\begin{proposition}\label{p:lcltd}
Let $d\geq 3$ be a fixed integer, and a prime number $p$ such that $\gcd(p,d)=1$ and $p\ll n^{1/4}$. Then for $n$ sufficiently large
\begin{align}\label{e:equd}
\sum_{(n_0,n_1,\cdots,n_{p-1}) \in \cE}\sum_{\bmv\in \cS(n_0,n_1,\cdots, n_{p-1})} |\{\cG\in \mathsf{M}_{n,d}: A(\cG)\bmv=\bm0\}|=\left(1+\OO\left(\frac{p^2(\ln n)^{3/2}}{\sqrt{n}}\right)\right)|\mathsf{M}_{n,d}|.
\end{align}
\end{proposition}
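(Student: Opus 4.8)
The plan is to reduce the left-hand side of \eqref{e:equd}, via Proposition \ref{p:walkrepd}, to the sum over equidistributed $p$-tuples appearing in \eqref{e:keyestd}, namely
\begin{align*}
\sum_{(n_0,\dots,n_{p-1})\in\cE}{n\choose n_0,\dots,n_{p-1}}{dn\choose dn_0,\dots,dn_{p-1}}^{-1}p^{n(d-1)}\bP\pb{X_1+\cdots+X_n=(dn_0,\dots,dn_{p-1})},
\end{align*}
and show this equals $1+\OO(p^2(\ln n)^{3/2}/\sqrt n)$. The key probabilistic input is a local central limit theorem for the lattice random walk $S_n=X_1+\cdots+X_n$, where $X$ is uniform on $\cU_{d,p}$ with mean $\bmmu=(d/p,\dots,d/p)$ and covariance $\Sigma=\frac dp I_p-\frac d{p^2}\bm1\bm1^t$ from \eqref{e:meanandcov}. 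Since $X$ lives on the hyperplane $\{x_1+\cdots+x_p=d\}$, I would work in the $(p-1)$-dimensional sublattice, diagonalize using the orthogonal matrix $Q=[O,\bm1/\sqrt p]$, and note that $\Sigma$ restricted to $\bm1^\perp$ is $\frac dp I_{p-1}$, so its determinant there is $(d/p)^{p-1}$. The LCLT, proved by Fourier inversion, should give
\begin{align*}
\bP\pb{S_n=\bmm}=\frac{1}{(2\pi n)^{(p-1)/2}}\frac{1}{\sqrt{\det(\Sigma|_{\bm1^\perp})}}\,p\,\exp\pB{-\frac{\langle \bmm-n\bmmu,\Sigma^+(\bmm-n\bmmu)\rangle}{2n}}\pb{1+\text{error}},
\end{align*}
the extra factor $p$ being the covolume of the sublattice of $\cU_{d,p}$ inside $\bm1^\perp$ (one should double-check this constant against the dual-lattice description preceding Proposition \ref{p:cfbound}); for $\bmm=(dn_0,\dots,dn_{p-1})$ the quadratic form in the exponent is $\frac{p}{2dn}\sum_j(dn_j-dn/p)^2=\frac{dp}{2n}\sum_j(n_j-n/p)^2$.

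The Fourier-inversion step splits $(2\pi\bR^p/\bZ^p)$ (modulo the direction $\bm1$) into the region near $\bm0$, where I Taylor-expand $\log\phi_{X-\bmmu}(\bmt)=-\tfrac12\langle\bmt,\Sigma\bmt\rangle+\OO(p\|\bmt\|^3)$ and get a Gaussian integral, the other balls $B_j(\delta)$ with $j\neq 0$, which by the lattice structure \eqref{e:line} contribute identical terms that must be accounted for (they are why the right-hand side of \eqref{e:smcountu} can pick up the $\bm1(p=2)$; for the directed case with $\gcd(p,d)=1$ one checks $\bmm\equiv d\bm n$ hits the correct residue so all $p$ copies combine into the single factor $p$ above), and the complement $\cup_j B_j(\delta)^c$, where Proposition \ref{p:cfbound} gives $|\phi_{X-\bmmu}(\bmt)^n|\leq(1-c(\delta)/p^3)^n$, which is $\oo(n^{-A})$ for any $A$ as long as $p^3\ll n/\log n$ — comfortably within $p\ll n^{1/4}$. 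For the asymptotics of the multinomial ratio I use Stirling: ${n\choose n_0,\dots}{dn\choose dn_0,\dots}^{-1}=d^{(p-1)/2}(2\pi n)^{(p-1)(d-1)/2}\cdot(\text{product of }n_j^{-(d-1)/2})\cdot(1+\text{error})$, and on $\cE$ each $n_j=n/p+\OO(\sqrt{n\log n})$, so $\prod_j n_j^{-(d-1)/2}=(p/n)^{(p-1)(d-1)/2}\exp(\OO(p(\log n)/\sqrt n\cdot\sqrt{p\log n}))$ after a second-order expansion of $\log n_j$ around $n/p$; here the $p^2(\log n)^{3/2}/\sqrt n$ in the error first appears, and it forces $p\ll n^{1/4}$.

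Putting the pieces together, the product of $p^{n(d-1)}$, the multinomial ratio, and $\bP(S_n=\bmm)$ should telescope: the powers of $2\pi n$ and of $d$ and $p$ cancel between the Stirling expansion and the LCLT normalization (this is the place to be most careful — matching $(2\pi n)^{(p-1)/2}\sqrt{\det\Sigma|_{\bm1^\perp}}=(2\pi n)^{(p-1)/2}(d/p)^{(p-1)/2}$ against the $d^{(p-1)/2}(2\pi n)^{(p-1)(d-1)/2}(p/n)^{(p-1)(d-1)/2}$ from Stirling and the surviving $p^{n(d-1)}$), leaving
\begin{align*}
\sum_{(n_0,\dots,n_{p-1})\in\cE}\frac{(2\pi n/p)^{-(p-1)/2}(d/p)^{-(p-1)/2}\cdot p}{1}\exp\pB{-\frac{dp}{2n}\sum_j(n_j-n/p)^2}\pb{1+\OO\pB{\tfrac{p^2(\ln n)^{3/2}}{\sqrt n}}},
\end{align*}
which is a Riemann sum for the Gaussian density on the hyperplane and equals $1+\oo(1)$ by comparing with the total mass (and the tail outside $\cE$, where $\sum_j(n_j-n/p)^2>\fb n\log n/p$ roughly, is $\OO(n^{-\fb d/2})$, negligible for $\fb$ large). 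The main obstacle I anticipate is bookkeeping the lattice covolume and the contributions of the $p-1$ non-principal balls $B_j(\delta)$ so that all the constants match exactly — getting a clean ``$1+\oo(1)$'' rather than some spurious power of $p$ requires the arithmetic condition $\gcd(p,d)=1$ to be used precisely at the point where one checks which residue class $(dn_0,\dots,dn_{p-1})$ lies in relative to the sublattice generated by $\cU_{d,p}$.
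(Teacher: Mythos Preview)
Your overall strategy matches the paper's proof exactly: reduce via Proposition~\ref{p:walkrepd}, compute $\bP(S_n=d\bmn)$ by Fourier inversion, handle the $p$ balls $B_j(\delta)$ by translation invariance, kill the complement with Proposition~\ref{p:cfbound}, and finish with a Riemann sum. The structure is right and the identification of where the error $p^2(\ln n)^{3/2}/\sqrt n$ arises is essentially correct.

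However, your Stirling expansion for the multinomial ratio is wrong, and this is not just a bookkeeping slip. The correct formula is
\[
{n\choose n_0,\dots,n_{p-1}}{dn\choose dn_0,\dots,dn_{p-1}}^{-1}p^{(d-1)n}
=\Bigl(1+\OO\Bigl(\tfrac{p^2}{n}\Bigr)\Bigr)\,d^{(p-1)/2}\prod_{j}(p\fn_j)^{(d-1)n_j}
=\Bigl(1+\OO\Bigl(\tfrac{p(\ln n)^{3/2}}{\sqrt n}\Bigr)\Bigr)\,d^{(p-1)/2}\exp\Bigl\{\tfrac{(d-1)np}{2}\sum_j(\fn_j-1/p)^2\Bigr\},
\]
not a polynomial like $(2\pi n)^{(p-1)(d-1)/2}\prod_j n_j^{-(d-1)/2}$. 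The exponential factor here is essential: it partially cancels the Gaussian from the LCLT, turning the exponent $-\tfrac{dnp}{2}\sum_j(\fn_j-1/p)^2$ into $-\tfrac{np}{2}\sum_j(\fn_j-1/p)^2$. Your displayed final sum still carries the exponent $-\tfrac{dp}{2n}\sum_j(n_j-n/p)^2$, which would make the Riemann sum evaluate to $d^{-(p-1)/2}$ rather than $1$. On $\cE$ this exponential is \emph{not} $1+\oo(1)$ uniformly (its exponent can be as large as $\tfrac{(d-1)\fb p}{2}\ln n$), so it cannot be absorbed into the error.

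A second point you gloss over: the probability $\bP(S_n=d\bmn)$ vanishes unless $\sum_j jn_j\equiv 0\pmod p$, so your sum over $\cE$ is really only over this coset. The paper handles this by an averaging trick (replacing each nonzero term by a local average over the $p$ neighbouring lattice points, at the cost of the factor $1/p$ that exactly compensates the factor $p$ you picked up from the $p$ balls $B_j(\delta)$), and only then treats the result as a genuine Riemann sum over the full $(p-1)$-dimensional lattice with covolume $\sqrt p/n^{p-1}$. Without making this step explicit, your final count would be off by a factor of $p$.
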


\begin{proof}
Thanks to Proposition \ref{p:walkrepd}, we have
\begin{align}\begin{split}\label{e:explcltd}
&\phantom{{}={}}\frac{1}{|\mathsf{M}_{n,d}|}\sum_{(n_0,n_1,\cdots,n_{p-1}) \in \cE}\sum_{\bmv\in \cS(n_0,n_1,\cdots, n_{p-1})} |\{\cG\in \mathsf{M}_{n,d}: A(\cG)\bmv=\bm0\}|\\
&=\sum_{(n_0,n_1,\cdots,n_{p-1})\in \cE}{n\choose n_0,n_1,\cdots, n_{p-1}}{dn\choose dn_0,dn_1,\cdots, dn_{p-1}}^{-1}\\
&\phantom{{}={}}\times p^{n(d-1)}\bP(X_1+X_2+\cdots+X_n=(dn_0, dn_{1},\cdots, dn_{p-1})),
\end{split}\end{align}
where $X_1,X_2,\cdots, X_n$ are independent copies of $X$, which is uniform distributed over $\cU_{d,p}$ as defined in \eqref{e:defcU}.
For an equidistributed $p$-tuple $(n_0,n_1,\cdots,n_{p-1})$, we denote $\fn_j=n_j/n$ for $j=0,1,\cdots, p-1$. Then by our definition of $\cE$, we have 
$\sum_{j=0}^{p-1}(\fn_j-1/p)^2\leq \fb \ln n/n$. We estimate the first factor on the righthand side of \eqref{e:explcltd} using Stirling's formula,
\begin{align}\begin{split}\label{e:firstfactord}
&\phantom{{}={}}{n\choose n_0,n_1,\cdots, n_{p-1}}{dn\choose dn_0,dn_1,\cdots, dn_{p-1}}^{-1}p^{(d-1)n}\\
&=\left(1+\OO\left(\frac{p^2}{n}\right)\right)d^{\frac{p-1}{2}}
\exp\left\{(d-1)n\left(\sum_{j=0}^{p-1}\fn_j\ln \fn_j +\ln p\right)\right\}\\
&=\left(1+\OO\left(\frac{p(\ln n)^{3/2}}{\sqrt{n}}\right)\right)d^{\frac{p-1}{2}}
\exp\left\{\frac{(d-1)pn}{2}\sum_{j=0}^{p-1}(\fn_j-1/p)^2\right\}.
\end{split}\end{align}

In the following, we estimate $\bP(S_n=(dn_0, dn_{1},\cdots, dn_{p-1}))$, where $S_n=X_1+X_2+\cdots+X_n$.
We recall that $X_1, X_2,\cdots, X_n$ are independent copies of $X$, which is uniformly distributed over the multiset $\cU_{d,p}$. 
For a $p$-tuple $(n_0,n_1,\cdots,n_{p-1})$, if $\sum_{j=0}^{p-1}jn_j\not\equiv 0 \Mod p$, then $\bP(S_n=(dn_0,dn_1,\cdots, dn_{p-1}))=0$. We only need to consider $p$-tuples $(n_0,n_1,\cdots,n_{p-1})$ such that $\sum_{j=0}^{p-1}jn_j\equiv 0 \Mod p$. We denote $\bmn=(n_0,n_1,\cdots, n_{p-1})$. By inverse Fourier formula
\begin{align*}\begin{split}
\bP(S_n=d\bmn)
&=\frac{1}{(2\pi)^p}\int_{2\pi \bR^p/\bZ^p}\phi_X^n(\bmt)e^{-\ri \langle \bmt, d\bmn\rangle}\rd \bmt\\
&=\frac{1}{(2\pi)^p}\int_{2\pi \bR^p/\bZ^p}\phi_{X-\bmmu}^n(\bmt)e^{-\ri \langle \bmt, d\bmn-n\bmmu\rangle}\rd \bmt,
\end{split}\end{align*}
where $\phi_X(\bmt)$ and $\phi_{X-\bmmu}(\bmt)$ are the characteristic functions of $X$ and $X-\bmmu$ respectively. 
We recall the domains $B_j(\delta)$ for $j=0,1,2,\cdots, p-1$ from \eqref{e:domainB}. Thanks to Proposition \ref{p:cfbound}, the characteristic function $|\phi^n_{X-\bmmu}(\bmt)|$ is exponentially small outside those sets $B_j(\delta)$.
\begin{align}\begin{split}\label{e:integ1d}
\bP(S_n=d\bmn)
&=\frac{1}{(2\pi)^p}\sum_{j=0}^{p-1}\int_{2\pi B_j(\delta)}\phi_{X-\bmmu}^n(\bmt)e^{-\ri \langle \bmt, d\bmn-n\bmmu\rangle}\rd \bmt + e^{-c(\delta)n/p^3}\\
&=\frac{p}{(2\pi)^p}\int_{2\pi B_0(\delta)}\phi_{X-\bmmu}^n(\bmt)e^{-\ri \langle \bmt, d\bmn-n\bmmu\rangle}\rd \bmt + e^{-c(\delta)n/p^3},
\end{split}\end{align}
where we used the fact that the integrand is translation invariant by vectors $2\pi(0,1/p, 2/p,\cdots, p-1/p)\bZ$.
For any $\bmt\in  B_0(\delta)$, by definition there exists $\bmx\in \bR^{p-1}$ with $\|\bmx\|^2_2\leq \delta$ and $y\in [0,2\sqrt{p}\pi]$, such that $\bmt=Q(\bmx,y)=O\bmx+(y/\sqrt{p})\bm1$. By a change of variable, we can rewrite \eqref{e:integ1d} as
\begin{align}\begin{split}\label{e:integ2d}
&\phantom{{}={}}\frac{p}{(2\pi)^p}\int_{2\pi B_0(\delta)}\phi_{X-\bmmu}^n(\bmt)e^{-\ri \langle \bmt, d\bmn-n\bmmu\rangle}\rd \bmt\\
&=\frac{p}{(2\pi)^{p}}\int_{\{\bmx\in \bR^{p-1}:\|\bmx\|^2_2\leq \delta\}\times [0,2\sqrt{p}\pi]}\phi_{X-\bmmu}^n(Q(\bmx,y))e^{-\ri \langle Q(\bmx,y), d\bmn-n\bmmu\rangle}\rd \bmx\rd y\\
&=\frac{p^{3/2}}{(2\pi)^{p-1}}\int_{\{\bmx\in \bR^{p-1}:\|\bmx\|^2_2\leq \delta\}}\phi_{X-\bmmu}^n(O\bmx)e^{-\ri \langle O\bmx, d\bmn-n\bmmu\rangle}\rd \bmx,
\end{split}\end{align}
where we used that $\langle \bm1, X-\bmmu\rangle=0$ and $\langle\bm1, d\bmn-n\bmmu\rangle=0$. By Taylor expansion, the characteristic function is
\begin{align}\begin{split}\label{e:cf}
\phi_{X-\bmmu}(O\bmx)
&=\bE\left[1+\ri\langle O\bmx, X-\bmmu\rangle-\frac{1}{2}\langle O\bmx, X-\bmmu\rangle^2-\frac{\ri}{6}\langle O\bmx, X-\bmmu\rangle^3+\OO(\langle O\bmx, X-\mu\rangle^4)\right]\\
&=1-\frac{1}{2}\bmx^tO^t\Sigma O\bmx+\OO\left(\frac{\|\bmx\|_2^3}{p}\right)
=1-\frac{d}{2p}\|\bmx\|_2^2+\OO\left(\frac{\|\bmx\|_2^3}{p}\right),
\end{split}\end{align}
where we used $\Sigma=dI_p/p-d\bm1\bm1^t/p^2$ from \eqref{e:meanandcov}, and $O^t\Sigma O=d I_{p-1}/p$. Fix a large constant $\fc$, which will be chosen later. 
For $\fc p^2\ln n/n\leq \|\bmx\|_2^2\leq \delta$, we have
\begin{align}\label{e:integ3d}
|\phi_{X-\bmmu}(O\bmx)|^n\leq \exp\left\{-\left(\frac{\fc d}{2}+\oo(1)\right)p\ln n\right\},
\end{align}
which turns out to be negligible provided $\fc$ is large enough. In the following we will restrict the integral \eqref{e:integ2d} on the domain 
$\{\bmx\in \bR^{p-1}:\|\bmx\|^2_2\leq \fc p^2\ln n/n\}$. From \eqref{e:cf}, on the domain $\{\bmx\in \bR^{p-1}:\|\bmx\|^2_2\leq \fc p^2\ln n/n\}$, we have 
\begin{align*}
\phi_{X-\bmmu}^n(O\bmx)=\left(1+\OO\left(\frac{p^2(\ln n)^{3/2}}{n^{1/2}}\right)\right)e^{-\frac{dn}{2p}\|\bmx\|^2_2},
\end{align*}
and
\begin{align}\begin{split}\label{e:integ4d}
&\phantom{{}={}}\frac{p^{3/2}}{(2\pi)^{p-1}}\int_{\{\bmx\in \bR^{p-1}:\|\bmx\|^2_2\leq \fc p^2\ln n/n\}}\phi_{X-\bmmu}^n(O\bmx)e^{-\ri \langle O\bmx, d\bmn-n\bmmu\rangle}\rd \bmx\\
&=\left(1+\OO\left(\frac{p^2(\ln n)^{3/2}}{n^{1/2}}\right)\right)\frac{p^{3/2}}{(2\pi)^{p-1}}\int_{\{\bmx\in \bR^{p-1}:\|\bmx\|^2_2\leq \fc p^2\ln n/n\}}e^{-\frac{dn}{2p}\|\bmx\|^2_2}e^{-\ri \langle \bmx, O^t(d\bmn-n\bmmu)\rangle}\rd \bmx\\
&=\left(1+\OO\left(\frac{p^2(\ln n)^{3/2}}{n^{1/2}}\right)\right)\frac{p^{3/2}}{(2\pi)^{p-1}}\int_{ \bR^{p-1}}e^{-\frac{dn}{2p}\|\bmx\|^2_2}e^{-\ri \langle \bmx, O^t(d\bmn-n\bmmu)\rangle}\rd \bmx+e^{-\left(\frac{\fc d}{2}+\oo(1)\right)p\ln n}\\
&=\left(1+\OO\left(\frac{p^2(\ln n)^{3/2}}{n^{1/2}}\right)\right)\frac{p^{3/2}}{(2\pi)^{p-1}}\int_{ \bR^{p-1}}e^{-\frac{dn}{2p}\|\bmx\|^2_2}e^{-\ri \langle \bmx, O^t(d\bmn-n\bmmu)\rangle}\rd \bmx+e^{-\left(\frac{\fc d}{2}+\oo(1)\right)p\ln n}\\
&=\left(1+\OO\left(\frac{p^2(\ln n)^{3/2}}{n^{1/2}}\right)\right)p^{3/2}
\left(\frac{p}{2\pi d n}\right)^{\frac{p-1}{2}}e^{-\frac{dpn}{2}\left\|O^t \left(\frac{\bmn}{n}-\frac{\bmmu}{d}\right)\right\|_2^2}
+e^{-\left(\frac{\fc d}{2}+\oo(1)\right)p\ln n}.
\end{split}\end{align}
The exponents in \eqref{e:firstfactord} and \eqref{e:integ4d} cancel
\begin{align*}
-\frac{dpn}{2}\left\|O^t \left(\frac{\bmn}{n}-\frac{\bmmu}{d}\right)\right\|_2^2+\frac{(d-1)pn}{2}\sum_{j=0}^{p-1}(\fn_j-1/p)^2
=-\frac{pn}{2}\left\|O^t \left(\frac{\bmn}{n}-\frac{\bmmu}{d}\right)\right\|_2^2,
\end{align*}
where we used that $O^t$ is an isometry from $\{(x_1,x_2,\cdots, x_{p})\in \bR^p: x_1+x_2+\cdots+x_p=0\}$ to $\bR^{p-1}$.
Therefore, by combining the estimates \eqref{e:firstfactord}, \eqref{e:integ1d}, \eqref{e:integ2d}, \eqref{e:integ3d} and \eqref{e:integ4d}, we conclude that for any $p$-tuple $(n_0,n_1,\cdots,n_{p-1})\in \cE$, with $\sum_{j=0}^{p-1}jn_j\equiv 0 \Mod p$,
\begin{align}\begin{split}\label{e:integ4.5}
&\phantom{{}={}}\frac{1}{|\mathsf{M}_{d,p}|}\sum_{\bmv\in \cS(n_0,n_1,\cdots, n_{p-1})} |\{\cG\in \mathsf{M}_{n,d}: A(\cG)\bmv=\bm0\}|\\
&=
\left(1+\OO\left(\frac{p^2(\ln n)^{3/2}}{n^{1/2}}\right)\right)
p^{3/2}
\left(\frac{p}{2\pi n}\right)^{\frac{p-1}{2}}e^{-\frac{pn}{2}\left\|O^t \left(\frac{\bmn}{n}-\frac{\bmmu}{d}\right)\right\|_2^2}+e^{-\left(\frac{\fc d}{2}-\frac{(d-1)\fb }{2}+\oo(1)\right)p\ln n}.
\end{split}\end{align}
For the second term on the righthand side of \eqref{e:integ4.5}, since the total number of $p$-tuples $(n_0,n_1,\cdots, n_{p-1})\in \cE$ is bounded by $e^{p\ln n}$,
\begin{align}\label{e:secondtermd}
\sum_{(n_0,n_1,\cdots,n_{p-1}) \in \cE}e^{-\left(\frac{\fc d}{2}-\frac{(d-1)\fb }{2}+\oo(1)\right)p\ln n}
=e^{-\left(\frac{\fc d}{2}-\frac{(d-1)\fb}{2}-1+\oo(1)\right)p\ln n},
\end{align}
which is negligible provided $\fc$ is large enough.

For the first term on the righthand side of \eqref{e:integ4.5} corresponding to the $p$-tuple $(n_0,n_1,\cdots,n_{p-1})\in \cE$, with $\sum_{j=0}^{p-1}jn_j\equiv 0 \Mod p$, we can replace it by an average.
\begin{align*}\begin{split}
pe^{-\frac{pn}{2}\left\|O^t \left(\frac{\bmn}{n}-\frac{\bmmu}{d}\right)\right\|_2^2}
&=e^{-\frac{pn}{2}\left\|O^t \left(\frac{\bmn}{n}-\frac{\bmmu}{d}\right)\right\|_2^2}
+\left(1+\OO\left(\frac{p(\ln n)^{1/2}}{n^{1/2}}\right)\right)
\sum_{j=1}^{p-1}e^{-\frac{pn}{2}\left\|O^t \left(\frac{\bmn+\bme_j-\bme_0}{n}-\frac{\bmmu}{d}\right)\right\|_2^2}.
\end{split}\end{align*}
Therefore, we can replace the sum over $p$-tuples $(n_0,n_1,\cdots,n_{p-1})\in \cE$, with $\sum_{j=0}^{p-1}jn_j\equiv 0 \Mod p$ to the sum over all $p$-tuples $(n_0,n_1,\cdots,n_{p-1})\in \cE$ with a factor $1/p$. 
\begin{align}\begin{split}\label{e:integ5d}
&\phantom{{}={}}\sum_{(n_0,n_1,\cdots,n_{p-1})\in \cE\atop \sum_{j=0}^{p-1}jn_j\equiv 0\Mod p}
\left(1+\OO\left(\frac{p^2(\ln n)^{3/2}}{n^{1/2}}\right)\right)
p^{3/2}
\left(\frac{p}{2\pi n}\right)^{\frac{p-1}{2}}e^{-\frac{pn}{2}\left\|O^t \left(\frac{\bmn}{n}-\frac{\bmmu}{d}\right)\right\|_2^2}\\
&=\sum_{(n_0,n_1,\cdots,n_{p-1})\in \cE}
\left(1+\OO\left(\frac{p^2(\ln n)^{3/2}}{n^{1/2}}\right)\right)
p^{1/2}
\left(\frac{p}{2\pi n}\right)^{\frac{p-1}{2}}e^{-\frac{pn}{2}\left\|O^t \left(\frac{\bmn}{n}-\frac{\bmmu}{d}\right)\right\|_2^2}.
\end{split}\end{align}
In the following we estimate the sum in \eqref{e:integ5d}. The set of points $O^t(\bmn/n-\bm\mu/d)$ for $\bmn=(n_0,n_1,\cdots, n_{p-1})\in \cE$ is a subset of a lattice in $\bR^{p-1}$. A set of base for this lattice is given by
\begin{align*}
O^t(e_{j}-e_{0})/n, \quad 0\leq j\leq p-1.
\end{align*}
The volume of the fundamental domain is $p^{1/2}/n^{p-1}$. By viewing \eqref{e:integ5d} as a Riemann sum, we can rewrite it as an integral on $\bR^{p-1}$.
\begin{align}\begin{split}\label{e:integ6d}
&\phantom{{}={}}\sum_{(n_0,n_1,\cdots,n_{p-1})\in \cE}
\left(1+\OO\left(\frac{p^2(\ln n)^{3/2}}{n^{1/2}}\right)\right)
p^{1/2}
\left(\frac{p}{2\pi n}\right)^{\frac{p-1}{2}}e^{-\frac{pn}{2}\left\|O^t \left(\frac{\bmn}{n}-\frac{\bmmu}{d}\right)\right\|_2^2}\\
&=\left(1+\OO\left(\frac{p^2(\ln n)^{3/2}}{n^{1/2}}\right)\right)
\left(\frac{pn}{2\pi}\right)^{\frac{p-1}{2}}\int_{\{\bmx\in\bR^{p-1}:\|\bmx\|_2^2\leq \fb\ln n/n\}} e^{-\frac{pn}{2}\|\bmx\|^2}\rd \bmx\\
&=\left(1+\OO\left(\frac{p^2(\ln n)^{3/2}}{n^{1/2}}\right)\right),
\end{split}\end{align}
provided $\fb$ is large enough. The claim \eqref{e:equd} follows from combining \eqref{e:integ4.5}, \eqref{e:secondtermd} and \eqref{e:integ6d}.
%
%
This finishes the proof of Proposition \ref{p:lcltd}.
\end{proof}

\subsection{Large deviation estimate}
\label{subs:ldpd}

In this section, we show that the sum of terms in \eqref{e:keyestd} corresponding to non-equidistributed $p$-tuples $(n_0,n_1,\cdots,n_{p-1})$ is small. 
\begin{proposition}\label{p:ldpd}
Let $d\geq 3$ be a fixed integer, and a prime number $p$ such that $\gcd(p,d)=1$ and $p\ll n^{(d-2)/2d}$. Then for $n$ sufficiently large,
\begin{align}\label{e:noneqd}
\frac{1}{|\mathsf{M}_{n,d}|}\sum_{(n_0,n_1,\cdots,n_{p-1}) \in \cN}\sum_{\bmv\in \cS(n_0,n_1,\cdots, n_{p-1})} |\{\cG\in \mathsf{M}_{n,d}: A(\cG)\bmv=\bm0\}|\leq \frac{\OO(p^2)}{n^{(d-2)}}.
\end{align}
\end{proposition}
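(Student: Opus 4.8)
The plan is to start from Proposition~\ref{p:walkrepd}, which --- exactly as in the derivation of \eqref{e:keyestd} --- rewrites the left-hand side of \eqref{e:noneqd} as $\sum_{\bmn\in\cN}T(\bmn)$, where $\bmn=(n_0,\dots,n_{p-1})$,
\[
T(\bmn)\deq\binom{n}{n_0,\dots,n_{p-1}}\binom{dn}{dn_0,\dots,dn_{p-1}}^{-1}p^{n(d-1)}\,\bP\pb{X_1+\cdots+X_n=d\bmn}\,,
\]
and $X_1,\dots,X_n$ are i.i.d.\ uniform on $\cU_{d,p}$; it thus suffices to bound $\sum_{\bmn\in\cN}T(\bmn)$. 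I would treat the two factors separately. Writing $\fn_j\deq n_j/n$ and $D(\fn)\deq\sum_j\fn_j\ln(p\fn_j)\ge0$ for the relative entropy of $\fn$ against the uniform law on $\bF_p$, the elementary inequalities $\binom{n}{\bmn}\le\prod_j\fn_j^{-n_j}$ and $\binom{dn}{d\bmn}\ge(dn+1)^{-p}\prod_j\fn_j^{-dn_j}$ yield, for every $\bmn$,
\[
\binom{n}{\bmn}\binom{dn}{d\bmn}^{-1}p^{n(d-1)}\le(dn+1)^p\,e^{(d-1)nD(\fn)}\,;
\]
this factor \emph{grows} as $\fn$ moves away from the barycentre $\bmmu/d=(1/p,\dots,1/p)$, so the probability must decay faster. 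For the probability I would use a Cram\'er bound: optimizing $\bP(X_1+\cdots+X_n=d\bmn)\le\exp\{-n(\langle\bm\lambda,d\fn\rangle-\ln\bE\,e^{\langle\bm\lambda,X\rangle})\}$ over $\bm\lambda\in\bR^p$ gives $\bP(X_1+\cdots+X_n=d\bmn)\le e^{-n\Lambda^\ast(d\fn)}$, $\Lambda^\ast$ being the Legendre transform of the log--moment generating function of $X$; near $\bmmu/d$ one instead invokes the sharp local expansion from the proof of Proposition~\ref{p:lcltd} (via Proposition~\ref{p:cfbound}), which additionally supplies the polynomial prefactor and yields $T(\bmn)=(1+\oo(1))\,p^{3/2}\bke{\tfrac{p}{2\pi n}}^{(p-1)/2}e^{-\frac{pn}{2}\|\fn-\bmmu/d\|_2^2}+(\text{exponentially small})$.

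The analytic heart is the comparison $\Lambda^\ast(d\fn)\ge(d-1)D(\fn)$ on the probability simplex, with a quantitative control of the gap. From the character--sum identity $\bE\,e^{\langle\bm\lambda,X\rangle}=p^{-d}\sum_{t=0}^{p-1}\bke{\sum_{a=0}^{p-1}e^{\lambda_a}\omega^{ta}}^d$ with $\omega=e^{2\pi\ri/p}$, the crude estimate $\bE\,e^{\langle\bm\lambda,X\rangle}\le p^{-(d-1)}\bke{\sum_a e^{\lambda_a}}^d$ only gives $\Lambda^\ast(d\fn)\ge dD(\fn)-\ln p$, which is too weak away from the vertices of the simplex; the required improvement must come from genuinely exploiting the cancellation among the $t\neq0$ terms (equivalently, a quantitative version of Proposition~\ref{p:cfbound}). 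One checks that equality holds exactly at $\fn=\bmmu/d$ (both sides vanish) and at the vertex $\bme_0=(1,0,\dots,0)$ (both sides equal $(d-1)\ln p$, since $X_1+\cdots+X_n=dn\bme_0$ forces every $X_i=\Phi(\bm0)$); near $\bmmu/d$ the gap is $\asymp p\,\|\fn-\bmmu/d\|_2^2$, whereas near $\bme_0$ it only closes like $\OO(s\ln n/n)$ along tuples with $n_0=n-s$, so there the exponential bound throws away polynomial factors of exactly the relevant size $n^{-(d-2)}$; everywhere else the gap is $\ge c(d)>0$.

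With these ingredients I would split $\cN$ into three regimes. (i) \emph{Near the barycentre:} $\fb\ln n/n<\|\fn-\bmmu/d\|_2^2\le\fc p^2\ln n/n$. The local expansion applies, and summing $p^{3/2}\bke{\tfrac{p}{2\pi n}}^{(p-1)/2}e^{-\frac{pn}{2}\|\fn-\bmmu/d\|_2^2}$ over the relevant sublattice exactly as in the passage from \eqref{e:integ5d} to \eqref{e:integ6d}, but restricted to $\|\fn-\bmmu/d\|_2^2\ge\fb\ln n/n$, gives a total that is super-polynomially small once $\fb$ is chosen large. (ii) \emph{Bulk:} $\|\fn-\bmmu/d\|_2^2>\fc p^2\ln n/n$ with $\fn$ bounded away from $\bme_0$. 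Here $\Lambda^\ast(d\fn)-(d-1)D(\fn)\ge\min\{\tfrac p2\|\fn-\bmmu/d\|_2^2,\,c(d)\}\ge\tfrac12\fc p^3(\ln n)/n$ for $n$ large, so $T(\bmn)\le(dn+1)^p n^{-\fc p^3/2}$, and since $|\cN|\le(n+1)^p$ this contribution is negligible. (iii) \emph{Near the vertex $\bme_0$:} $\bmv$ supported on $\oo(n)$ coordinates. Here the Cram\'er bound is far too lossy and $T(\bmn)$ is estimated directly from \eqref{e:randomwalkd}: a tuple with $n_0=n-s$ forces all but $\OO(s)$ of the $X_i$'s to equal $\Phi(\bm0)$, the grand constraint $\sum_j jn_j\equiv0\Mod p$ rules out $s=1$, and the dominant admissible tuples --- $n_j=n_{-j}=1$, $n_0=n-2$ --- satisfy $|\{(\bmu_1,\dots,\bmu_n)\in\cU_{d,p}^n:\sum_i\bmu_i=d\bmn\}|=\binom{n}{d}(1+\oo(1))$, hence $T(\bmn)\asymp C(d)/n^{d-2}$, of exactly the order of the $K_{2,d}$-type configuration of Figure~\ref{f:K2d}. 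Summing over the $\OO(p^2)$ such tuples and over tuples with more (or with larger) small coordinates --- which contribute a geometric-type series in $s$ involving strictly smaller powers of $n$ --- the range $p\ll n^{(d-2)/2d}$ is precisely what makes all terms beyond the leading $\OO(p^2)/n^{d-2}$ negligible. Adding the three regimes gives \eqref{e:noneqd}.

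I expect the main obstacle to be the rate-function comparison of the second paragraph, together with the $p$-uniform description of the gap near $\bmmu/d$ and near $\bme_0$: the crude character bound loses a factor $e^{\ln p}$, and recovering it needs a careful lower bound on $\sup_{\bm\lambda}\bke{\langle\bm\lambda,d\fn\rangle-\ln\bE\,e^{\langle\bm\lambda,X\rangle}}$ exploiting the arithmetic of $\cU_{d,p}$, in the same spirit as --- but quantitatively sharper than --- the contradiction argument in the proof of Proposition~\ref{p:cfbound}. A secondary, combinatorial difficulty is the bookkeeping in regime (iii): checking uniformly in $\bmn$ near $\bme_0$ that $|\{(\bmu_i):\sum_i\bmu_i=d\bmn\}|$ is governed by configurations in which the $\OO(s)$ non-trivial $\bmu_i$'s occupy at most $d$ coordinates of $\bF_p$, and tracking the precise powers of $n$ and $p$.
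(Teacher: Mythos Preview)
Your outline is substantially correct and closely parallels the paper's proof: both start from Proposition~\ref{p:walkrepd}, bound the multinomial ratio by $e^{(d-1)nD(\fn)}$ up to polynomial factors, invoke a Cram\'er-type bound for the walk probability, and partition $\cN$ into a near-barycentre piece, a bulk, and a near-vertex piece handled combinatorially. Your regime (iii) analysis matches the paper's ``third class'' almost exactly, including the identification of the dominant $n_j=n_{p-j}=1$ tuples contributing $\asymp C(d)/n^{d-2}$.

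The one genuine gap is how the rate-function inequality $\Lambda^\ast(d\fn)\ge(d-1)D(\fn)$ is made quantitative. You correctly diagnose that the crude character bound loses a factor $e^{\ln p}$ and propose recovering it via cancellation among the $t\ne0$ terms, but you do not carry this out --- and you flag it yourself as the main obstacle. The paper sidesteps the issue by making the explicit (non-optimal) choice $\bm\lambda=\tfrac{d-1}{d}(\ln\fn_0,\dots,\ln\fn_{p-1})$ in the Legendre transform, which collapses the bound to
\[
I(\fn)\le\ln\sum_{\bma\in\bF_p^d:\,\sum_r a_r=0}\;\prod_{r=1}^d\fn_{a_r}^{(d-1)/d}\,.
\]
This sum is then controlled by the weighted AM--GM inequality $\prod_r\fn_{a_r}^{(d-1)/d}\le\tfrac1d\sum_r\prod_{s\ne r}\fn_{a_s}$, with a quantitative loss whenever $\max_r\fn_{a_r}/\min_r\fn_{a_r}$ is bounded away from $1$; a short case analysis on the simplex (Proposition~\ref{p:ldpbound}) then gives $I\le -c(\delta)/p$ away from the barycentre and from $\bme_0$. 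This explicit dual point plus AM--GM is the missing key lemma in your proposal. Your character-sum route is plausible in principle --- $|F_t|/F_0<1$ for $t\ne0$ does encode non-equidistribution of $e^{\bm\lambda}$ --- but making it deliver a $p$-uniform bound of the right shape appears no easier; note also that the correct bulk gap is only $\Theta(1/p)$, not the $p$-free $c(d)$ you assert.

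Two smaller points. First, the paper uses four classes, not three: it splits your regime (iii) into a ``very close to $\bme_0$'' piece ($n-n_0\le\fb p\ln n$), handled by direct counting, and a ``moderately close'' piece ($\fb p\ln n<n-n_0\le\delta n/p$), handled by a refined rate-function estimate \eqref{e:midbound}. This split is necessary: your combinatorial series has general term of order $(O(1)\,p\,m^{d-1}/n^{d/2-1})^m$, which ceases to decay once $m\gtrsim n^{(d-2)/(2(d-1))}$, so it cannot cover all of $m=o(n)$. Second, for the near-barycentre non-equidistributed tuples the paper does \emph{not} reuse the local CLT as you suggest; it Taylor-expands the rate-function bound above directly to obtain $I\le -(1+O(\delta))\tfrac{(d-1)p}{2d}\|\fn-\bmmu/d\|_2^2$, which is both cleaner and avoids having to extend the CLT error control to the wider range $\|\fn-\bmmu/d\|_2^2\le\fc p^2\ln n/n$.
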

Thanks to Proposition \ref{p:walkrepd}, we have
\begin{align}\begin{split}\label{e:expldpd}
&\phantom{{}={}}\frac{1}{|\mathsf{M}_{n,d}|}\sum_{(n_0,n_1,\cdots,n_{p-1}) \in \cN}\sum_{\bmv\in \cS(n_0,n_1,\cdots, n_{p-1})} |\{\cG\in \mathsf{M}_{n,d}: A(\cG)\bmv=\bm0\}|\\
&=\sum_{(n_0,n_1,\cdots,n_{p-1})\in \cN}{n\choose n_0,n_1,\cdots, n_{p-1}}{dn\choose dn_0,dn_1,\cdots, dn_{p-1}}^{-1}\\
&\phantom{{}={}}\times p^{n(d-1)}\bP(X_1+X_2+\cdots+X_n=(dn_0, dn_{1},\cdots, dn_{p-1})),
\end{split}\end{align}
where $X_1,X_2,\cdots, X_n$ are independent copies of $X$, which is uniform distributed over $\cU_{d,p}$ as defined in \eqref{e:defcU}. We enumerate the elements of $\cU_{d,p}$ as
\begin{align*}
\cU_{d,p}=\{\bmw_1, \bmw_2,\cdots, \bmw_{p^{d-1}}\}, \quad \bmw_1=(d,0,0,\cdots,0).
\end{align*}
For any non-equidistributed $p$-tuple $(n_0,n_1,\cdots,n_{p-1})$, we denote $\fn_j=n_j/n$ for $j=0,1,\cdots, p-1$. We estimate the first factor on the righthand side of \eqref{e:expldpd} using Stirling's formula,
\begin{align}\begin{split}\label{e:firstfactor2d}
{n\choose n_0,n_1,\cdots, n_{p-1}}{dn\choose dn_0,dn_1,\cdots, dn_{p-1}}^{-1}p^{n(d-1)}
\leq e^{\OO(p)}\exp\left\{(d-1)n\left(\ln p+\sum_{j=0}^{p-1} \fn_j\ln \fn_j\right)\right\}.
\end{split}\end{align}
For the random walk term in \eqref{e:expldpd}, we have the following large deviation bound
\begin{align}\label{e:secondfactor2d}
\bP(X_1+X_2+\cdots+X_n=(dn_0, dn_{1},\cdots, dn_{p-1}))
\leq \exp\left\{n\inf_{\bmt\in \bR^p} \log \bE[e^{\langle \bmt, X\rangle}]-d\langle \bmt, \bm\fn\rangle\right\}.
\end{align}
Thus combining \eqref{e:firstfactor2d} and \eqref{e:secondfactor2d}, we get that
\begin{align*}
\begin{split}
&\phantom{{}={}}\frac{1}{|\mathsf{M}_{n,d}|}\sum_{(n_0,n_1,\cdots,n_{p-1}) \in \cN}\sum_{\bmv\in \cS(n_0,n_1,\cdots, n_{p-1})} |\{\cG\in \mathsf{M}_{n,d}: A(\cG)\bmv=\bm0\}|\\
&\leq \sum_{(n_0,n_1,\cdots,n_{p-1}) \in \cN} e^{\OO(p)}\exp\left\{(d-1)n \ln p+(d-1)n\sum_{j=0}^{p-1} \fn_j\ln \fn_j+n\inf_{\bmt\in \bR^p} \log \bE[e^{\langle \bmt, X\rangle}]-d\langle \bmt, \bm\fn\rangle\right\}\\&=\sum_{(n_0,n_1,\cdots,n_{p-1}) \in \cN} e^{\OO(p)} e^{n I(\fn_0,\fn_1, \cdots, \fn_{p-1})},
\end{split}\end{align*}
where the rate function is given by
\begin{align}\label{e:ratefd}
I(\fn_0,\fn_1, \cdots, \fn_{p-1})=(d-1)\ln p+(d-1)\sum_{j=0}^{p-1} \fn_j\ln \fn_j+\inf_{\bmt\in \bR^p} \log \bE[e^{\langle \bmt, X\rangle}]-d\langle \bmt, \bm\fn\rangle.
\end{align}
The rate function function is negative except for two points:  $\fn_0=\fn_1=\cdots=\fn_{p-1}=1/p$,
and  $\fn_0=1$, $\fn_1=\cdots=\fn_{p-1}=0$. In the following proposition we give a quantitative estimate of the rate function.
\begin{proposition}\label{p:ldpbound}
Let $d\geq 3$ be a fixed integer, and a prime number $p$ such that $\gcd(p,d)=1$. The rate function as defined in \eqref{e:ratefd} satisfies: for any small $\delta> 0$, there exists a constant $c(\delta)>0$, such that
\begin{align}\label{e:ratebound}
I(\fn_0, \fn_1,\cdots, \fn_{p-1})\leq -\frac{c(\delta)}{p},
\end{align}
unless
$ \max_{0\leq k\leq p}|\fn_k-1/p|\leq \delta/p$, or
$\fn_0\geq 1-\delta/p$.
\end{proposition}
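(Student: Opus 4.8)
\emph{Proof strategy.} Write $\bm q=(\fn_0,\dots,\fn_{p-1})$, viewed as a probability vector on $\bF_p$; set $H(\bm q)=-\sum_a\fn_a\ln\fn_a$ and $\hat q(\ell)=\sum_a\fn_a\omega^{\ell a}$ with $\omega=e^{2\pi\ri/p}$. The plan is to bound $I(\bm q)$ of \eqref{e:ratefd} from above by evaluating the infimum at well-chosen test vectors $\bmt$, and to assemble these bounds through a case analysis on the shape of $\bm q$. All the estimates rest on the exact identity, obtained by inserting the roots-of-unity filter $\tfrac1p\sum_\ell\omega^{\ell(a_1+\cdots+a_d)}$ for the constraint defining $\cU_{d,p}$,
\begin{align*}
\bE\big[e^{\langle\bmt,X\rangle}\big]\;=\;\frac1{p^d}\sum_{\ell=0}^{p-1}\Big(\sum_{a=0}^{p-1}e^{t_a}\omega^{\ell a}\Big)^d,
\end{align*}
whose $\ell=0$ term is $\big(\sum_a e^{t_a}\big)^d$ and whose other terms obey $\big|\sum_a e^{t_a}\omega^{\ell a}\big|\leq\sum_a e^{t_a}$, strictly unless the $\bmt$-tilted law on $\bF_p$ is a point mass.

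\emph{Spread-out regime.} First I would use the smooth test vector $t_a=\ln(\fn_a+\varepsilon)$, $\varepsilon\downarrow0$, which gives for every $\bm q$
\begin{align*}
I(\bm q)\;\leq\;-\ln p+H(\bm q)+\log\Big(1+\sum_{\ell=1}^{p-1}|\hat q(\ell)|^d\Big).
\end{align*}
Using $|\hat q(\ell)|\leq1$, Parseval $\sum_\ell|\hat q(\ell)|^2=p\|\bm q\|_2^2$, and $\sum_{\ell\neq0}|\hat q(\ell)|^d\leq\big(\max_{\ell\neq0}|\hat q(\ell)|\big)^{d-2}\big(p\|\bm q\|_2^2-1\big)$, this right side is negative whenever $\max_{\ell\neq0}|\hat q(\ell)|$ stays a fixed amount below $1$, since then the $d$-th powers are far smaller than the $2$nd powers while $H(\bm q)\leq\ln p$. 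A second-order expansion identifies the gap near the uniform vector as $\asymp-\tfrac p2\big\|\bm q-\tfrac1p\bm1\big\|_2^2$, which is $\leq-c(\delta)/p$ exactly once $\max_k|\fn_k-1/p|>\delta/p$; and it is a negative constant for genuinely spread-out $\bm q$ (e.g.\ uniform on an arithmetic progression of length comparable to $p$).

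\emph{Peaked regime and the degenerate set.} For $\bm q$ concentrated at $a^\star=\argmax_a\fn_a$, where the above bound degrades, I would instead take the hybrid test vector $t_{a^\star}=s$, $t_a=\ln\fn_a$ for $a\neq a^\star$; optimizing the resulting one-variable expression over $s\in\bR$ (a Laplace computation whose leading cancellation is exactly the one visible in the displayed identity) gives, when the optimum is attained at a finite $s$,
\begin{align*}
I(\bm q)\;\leq\;-\tfrac{d-2}{2}\,(1-\fn_{a^\star})\ln\tfrac1{1-\fn_{a^\star}}+O\big(1-\fn_{a^\star}\big),
\end{align*}
which for $d\geq3$ is negative and, once $1-\fn_0$ leaves the excluded interval $(0,\delta/p]$, is $\leq-c(\delta)/p$; the positive coefficient $\tfrac{d-2}{2}$ — the one place $d\geq3$ is used — reflects that the entropy term $(d-1)\sum_j\fn_j\ln\fn_j$ outweighs the Laplace gain $\tfrac d2(1-\fn_{a^\star})\ln(\cdots)$. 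If instead the infimum over $s$ is $-\infty$ — which, when $a^\star\neq0$, already occurs for $\bmt=s\,\bme_{a^\star}$, $s\to+\infty$, as soon as $\fn_{a^\star}>1-1/d$ — then $I(\bm q)=-\infty$; more generally $I(\bm q)=-\infty$ whenever $d\bm q\notin\mathrm{conv}(\supp X)$, which, since $d$ is a fixed constant and $p$ is large, is the whole ``over-concentrated'' region (for instance $\bm q$ supported on an arithmetic progression of length $<p/d$), exactly where the quantitative bound above is useless.

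\emph{Main obstacle.} What then remains is to check that these three mechanisms tile $\{\bm q:\max_k|\fn_k-1/p|>\delta/p,\ \fn_0<1-\delta/p\}$ with a uniform gap of size $c(\delta)/p$. The dichotomy driving the tiling runs on $\max_{\ell\neq0}|\hat q(\ell)|$: if it is a fixed amount below $1$, the spread-out bound already answers; if it is close to $1$, then — via $\sum_{\ell\neq0}(1-|\hat q(\ell)|^2)=p(1-\|\bm q\|_2^2)$ — $\bm q$ is $\ell^2$-close, hence $\ell^\infty$-close, to a point mass or, more generally, to the uniform measure on a short arithmetic progression, and then one is either inside the excluded corner at $\fn_0$, or in the range where the peaked bound is negative, or in the range where $d\bm q\notin\mathrm{conv}(\supp X)$. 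I expect this last step to be the hard part: the individual estimates in the three regimes are routine saddle-point and Fourier calculations, but making the ``short arithmetic progression'' alternative quantitative — simultaneously controlling $\max_{\ell\neq0}|\hat q(\ell)|$, the $\ell^2$-mass, and the support of $\bm q$ — and patching the bounds along the regime boundaries so that the final gap is of order $1/p$ near the two critical configurations and a constant elsewhere, will require care.
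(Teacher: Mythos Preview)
Your approach is genuinely different from the paper's, and the difference is instructive.

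\textbf{What the paper does.} The paper evaluates the infimum at a \emph{single} test vector, $\bmt=\tfrac{d-1}{d}(\ln\fn_0,\dots,\ln\fn_{p-1})$, not $\bmt=\ln\bm q$. With this choice the entropy term $(d-1)\sum_j\fn_j\ln\fn_j$ cancels \emph{exactly}, leaving
\[
I(\bm q)\;\leq\;\log\sum_{(a_1,\dots,a_d):\,\sum a_i=0}\ \prod_{r=1}^d\fn_{a_r}^{(d-1)/d}.
\]
The problem is now purely to show this sum is $\leq 1-c(\delta)/p$. The paper does this by AM--GM: each summand satisfies $\prod_r\fn_{a_r}^{(d-1)/d}\leq\tfrac1d\sum_r\prod_{s\neq r}\fn_{a_s}$, with a quantitative deficit $(1-c(\varepsilon))$ whenever $\min_r\fn_{a_r}/\max_r\fn_{a_r}\leq 1/(1+\varepsilon)$; summing the right side collapses to $(\sum_a\fn_a)^{d-1}=1$. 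The remaining work is a short case analysis on the sorted sequence $\fn_{k_1}\geq\cdots\geq\fn_{k_p}$, locating a large enough family of zero-sum $d$-tuples with a nontrivial ratio. No Fourier analysis and no second test vector are needed.

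\textbf{What you gain and lose.} Your Fourier identity is correct and your near-uniform expansion is right, but by choosing $\bmt=\ln\bm q$ you keep a residual $-\ln p+H(\bm q)$ that must be played off against $\log(1+\sum_{\ell\neq0}\hat q(\ell)^d)$. For $\bm q$ close to uniform on a proper arithmetic progression these two pieces nearly cancel (indeed $\sum_\ell|\hat q(\ell)|^2=p\|\bm q\|_2^2=p/L$ while $-\ln p+H(\bm q)=-\ln(p/L)$), which is exactly why you are forced into the hybrid test vector, the $I=-\infty$ mechanism, and the patching you flag as hard. The paper's coefficient $\tfrac{d-1}{d}$ removes this balance problem entirely.

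\textbf{A concrete gap.} Your peaked-regime inequality $I(\bm q)\leq-\tfrac{d-2}{2}(1-\fn_{a^\star})\ln\tfrac1{1-\fn_{a^\star}}+O(1-\fn_{a^\star})$ is asserted but not derived, and the implied constant in the $O(\cdot)$ matters: for $1-\fn_{a^\star}$ of order~$1$ the bound could be positive, so this display does not by itself cover, say, $\fn_{a^\star}\in[1/2,\,1-1/d]$ with $a^\star\neq0$. You then appeal to either the spread-out bound or to $d\bm q\notin\mathrm{conv}(\supp X)$, but $\bm q$ concentrated on two atoms of comparable mass is neither over-concentrated nor Fourier-flat, and you have not shown that any of your three mechanisms yields $-c/p$ there. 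The paper's AM--GM argument handles this regime directly (its case ``$1<t_2<p$'' is precisely the few-atoms situation), with no need to identify the convex hull of $\supp X$.

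In short: your Fourier strategy is natural but creates the very boundary-matching problem you identify; the paper sidesteps it with the test vector $\tfrac{d-1}{d}\ln\bm q$ and a one-shot AM--GM inequality, turning the whole proposition into an elementary combinatorial estimate.
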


\begin{proof}
We take $\bmt=(d-1)/d((\ln \fn_0, \ln \fn_1,\cdots, \ln \fn_{p-1})+\ln p)$ in \eqref{e:ratefd}, the rate function $I$ is upper bounded by 
\begin{align}\label{e:sharprate}
I(\fn_0,\fn_1, \cdots, \fn_{p-1})\leq \log \sum_{j=1}^{p^{d-1}}\prod_{k=0}^{p-1}\fn_k^{\frac{d-1}{d}\bmw_j(k)},
\end{align}
In the following, we prove that there exists a constant $c(\delta)>0$
\begin{align}\label{e:ratebound2}
\sum_{j=1}^{p^{d-1}}\prod_{k=0}^{p-1}\fn_k^{\frac{d-1}{d}\bmw_j(k)}\leq 1-\frac{c(\delta)}{p}, 
\end{align}
unless $ \max_{0\leq k\leq p}|\fn_k-1/p|\leq \delta/p$, or
$\fn_0\geq 1-\delta/p$. Then the claim \eqref{e:ratebound} follows.

For any $\varepsilon>0$ and $d$-tuple $(a_1,a_2,\cdots,a_{d})\in \bF_p^d$ such that $a_1+a_2+\cdots+a_d=0$, if
\begin{align*}
\frac{\min_{1\leq r\leq d}\fn_{a_r}}{\max_{1\leq r\leq d}\fn_{a_r}}\leq 
\frac{1}{1+\varepsilon}, 
\end{align*}
then there exists a constant $c(\varepsilon)>0$ such that 
\begin{align*}
\prod_{r=1}^d\fn_{a_r}^{\frac{d-1}{d}}\leq \frac{1-c(\varepsilon)}{d}\sum_{r=1}^d \prod_{1\leq s\leq d\atop s\neq r}\fn_{a_s}.
\end{align*}
Therefore, by the defining relation of the multiset $\cU_{d,p}$ as in \eqref{e:defcU}, we have
\begin{align}\begin{split}\label{e:am-gm}
\sum_{j=1}^{p^{d-1}}\prod_{k=0}^{p-1}\fn_k^{\frac{d-1}{d}\bmw_j(k)}
&=\sum_{(a_1,a_2,\cdots,a_{d})\in \bF_p^d,\atop a_1+a_2+\cdots+a_d=\bm0}\prod_{r=1}^d \fn_{a_r}^{\frac{d-1}{d}}\\
&\leq \sum_{(a_1,a_2,\cdots,a_{d})\in \bF_p^d,\atop a_1+a_2+\cdots+a_d=\bm0}\frac{1}{d}\left({1-1_{\frac{\min_{r} \fn_{a_r}}{\max_{r} \fn_{a_r}}\leq \frac{1}{1+\varepsilon}}c(\varepsilon)}\right)\sum_{r=1}^d\prod_{1\leq s\leq d\atop s\neq r} \fn_{a_s}\\
&=\left(\sum_{j=0}^{p-1} \fn_j\right)^{d-1}-\frac{c(\varepsilon)}{d} 
\sum_{(a_1,a_2,\cdots,a_{d})\in \bF_p^d,\atop a_1+a_2+\cdots+a_d=\bm0}1_{\frac{\min_{r} \fn_{a_r}}{\max_{r} \fn_{a_r}}\leq \frac{1}{1+\varepsilon}}\sum_{r=1}^d\prod_{1\leq s\leq d\atop s\neq r} \fn_{a_s}\\
&\leq 1-\frac{c(\varepsilon)}{d} 
\sum_{(a_1,a_2,\cdots,a_{d})\in \bF_p^d,\atop a_1+a_2+\cdots+a_d=\bm0}1_{\frac{\min_{r} \fn_{a_r}}{\max_{r} \fn_{a_r}}\leq \frac{1}{1+\varepsilon}}\prod_{r=1}^{d-1}\fn_{a_r}.
\end{split}\end{align}
In the following, we take $\varepsilon=\delta/3$, and prove that there exists a constant $c(\delta)$
\begin{align}\label{e:ratebound3}
\sum_{(a_1,a_2,\cdots,a_{d})\in \bF_p^d,\atop a_1+a_2+\cdots+a_d=\bm0}1_{\frac{\min_{r} \fn_{a_r}}{\max_{r} \fn_{a_r}}\leq \frac{1}{1+\varepsilon}}\prod_{r=1}^{d-1}\fn_{a_r}\geq \frac{c(\delta)}{p}, 
\end{align}
unless $ \max_{0\leq k\leq p}|\fn_k-1/p|\leq \delta/p$, or
$\fn_0\geq 1-\delta/p$. Then the claim \eqref{e:ratebound2} follows.

We sort these numbers $\fn_0, \fn_1,\cdots, \fn_{p-1}$ as $\fn_{k_1}\geq \fn_{k_2}\geq \cdots \geq \fn_{k_{p}}$, then $\fn_{k_1}\geq 1/p$. We take the indices $t_1$ and $t_2$ such that $\fn_{k_{t_1}}>\fn_{k_1}/(1+\varepsilon)\geq \fn_{k_{t_1+1}}$, and $\fn_{k_{t_2}}>\fn_{k_1}/(1+\varepsilon)^2\geq \fn_{k_{t_2+1}}$. If $\fn_{k_{t_1+1}}+\fn_{k_{t_1+2}}+\cdots +\fn_{k_p}\geq \varepsilon$, by restricting the sum in \eqref{e:ratebound3} over $d$-tuples with $a_1=k_1$ and $a_2\in \{k_{t_1+1}, k_{t_1+2},\cdots, k_{p}\}$, we get
\begin{align*}
\sum_{(a_1,a_2,\cdots,a_{d})\in \bF_p^d,\atop a_1+a_2+\cdots+a_d=\bm0}1_{\frac{\min_{r} \fn_{a_r}}{\max_{r} \fn_{a_r}}\leq \frac{1}{1+\varepsilon}}\sum_{r=1}^{d-1}\fn_{a_r}
\geq 
\fn_{k_1}(\fn_{k_{t_1+1}}+\fn_{k_{t_1+2}}+\cdots+\fn_{k_{p}})\sum_{a_3, a_4,\cdots, a_{d-1}\in \bF_p}\prod_{r=3}^{d-1}\fn_{a_r}\geq \frac{\varepsilon}{p}.
\end{align*}
The claim \eqref{e:ratebound3} follows. So we can assume that $\fn_{k_{t_1+1}}+\fn_{k_{t_1+2}}+\cdots +\fn_{k_p}\leq  \varepsilon$ and thus $\fn_{k_{1}}+\fn_{k_{2}}+\cdots +\fn_{k_{t_1}}\geq  1-\varepsilon$.
If $\fn_{k_{t_2+1}}+\fn_{k_{t_2+2}}+\cdots +\fn_{k_p}\geq \varepsilon/p$, by restricting the sum in \eqref{e:ratebound3} over $d$-tuples with $a_1\in \{k_1, k_2,\cdots, k_{t_1}\}$ and $a_2\in \{k_{t_2+1}, k_{t_2+2},\cdots, k_{p}\}$, we get
\begin{align*}\begin{split}
&\phantom{{}={}}\sum_{(a_1,a_2,\cdots,a_{d})\in \bF_p^d,\atop a_1+a_2+\cdots+a_d=\bm0}1_{\frac{\min_{r} \fn_{a_r}}{\max_{r} \fn_{a_r}}\leq \frac{1}{1+\varepsilon}}\sum_{r=1}^{d-1}\fn_{a_r}\\
&\geq 
(\fn_{k_{1}}+\fn_{k_{2}}+\cdots+\fn_{k_{t_1}})(\fn_{k_{t_2+1}}+\fn_{k_{t_2+2}}+\cdots+\fn_{k_{p}})\sum_{a_3, a_4,\cdots, a_{d-1}\in \bF_p}\prod_{r=3}^{d-1}\fn_{a_r}\geq \frac{(1-\varepsilon)\varepsilon}{p}.
\end{split}\end{align*}
The claim \eqref{e:ratebound3} follows. So we can assume that $\fn_{k_{t_2+1}}+\fn_{k_{t_2+2}}+\cdots +\fn_{k_p}\leq  \varepsilon/p$. There are three cases:
\begin{enumerate}
\item If $t_2=p$, then we have $\max_{0\leq k\leq p-1} \fn_k \leq (1+\varepsilon)^2\min_{0\leq k\leq p-1} \fn_k$. And it follows that $(1+\varepsilon)^{-2}/p\leq \fn_k \leq(1+\varepsilon)^{2}/p$ for $0\leq k\leq p-1$. The claim follows by taking $2\varepsilon+\varepsilon^2\leq \delta$.
\item If $t_2=1$, then we have $\fn_{k_1}\geq 1-\varepsilon/p$. If $k_1=0$, then the claim follows by taking $\varepsilon\leq \delta$. Otherwise, $k_1\neq 0$. By restricting the sum in \eqref{e:ratebound3} over $d$-tuples with $a_1=a_2=\cdots=a_{d-1}=k_1$, we get
\begin{align*}
\sum_{(a_1,a_2,\cdots,a_{d})\in \bF_p^d,\atop a_1+a_2+\cdots+a_d=\bm0}1_{\frac{\min_{r} \fn_{a_r}}{\max_{r} \fn_{a_r}}\leq \frac{1}{1+\varepsilon}}\sum_{r=1}^{d-1}\fn_{a_r}
\geq 
\fn_{k_1}^{d-1}\geq \left(1-\frac{\varepsilon}{p}\right)^{d-1}.
\end{align*}
The claim \eqref{e:ratebound3} follows. 
\item If $1<t_2<p$, then we have $\fn_{k_1}\geq \fn_{k_2}\geq \cdots\geq \fn_{k_{t_2}}\geq \fn_{k_1}/(1+\varepsilon)^2$. And it follows $(1+\varepsilon)^{2}/t_2\geq \fn_{k_1}\geq \fn_{k_2}\geq \cdots\geq \fn_{k_{t_2}} \geq(1+\varepsilon)^{-2}/t_2\geq (1+\varepsilon)\fn_{k_{t_2}+1}$. We will restrict the sum in \eqref{e:ratebound3} over $d$-tuples with $a_1,a_2,\cdots, a_{d-1}\in \{k_1, k_2,\cdots, k_{t_2}\}$, and $a_d\in \{k_{t_2+1}, k_{t_2+2}, \cdots, k_p\}$.
We take integer $q$ such that $t_2q\equiv -2(k_1+k_2+\cdots+k_{t_2})\Mod p$. The number of $(d-2)$-tuples $a_1,a_2,\cdots, a_{d-2}\in \{k_1, k_2,\cdots, k_{t_2}\}$ such that $a_1+a_2+\cdots+a_{d-2}\not\equiv q\Mod p$
is at least $(t_2-1)t_2^{d-3}$. For any $(d-2)$-tuple $a_1,a_2,\cdots, a_{d-2}\in \{k_1, k_2,\cdots, k_{t_2}\}$ such that $a_1+a_2+\cdots+a_{d-2}\not\equiv q\Mod p$, there exists at least one $a_{d-1}\in \{k_1, k_2,\cdots, k_{t_2}\}$ such that $a_1+a_2+\cdots+a_{d-1}\not\equiv -k_1,-k_2,\cdots,-k_{t_2}\Mod p$. Otherwise 
\begin{align*}
t_2(a_1+a_2+\cdots+a_{d-2})+(k_1+k_2+\cdots+k_{t_2})\equiv -(k_1+k_2+\cdots+k_{t_2})\Mod p,
\end{align*}
and $a_1+a_2+\cdots+a_{d-2}\equiv q\Mod p$. This leads to a contradiction. Therefore, the number of $d$-tuples with $a_1,a_2,\cdots, a_{d-1}\in \{k_1, k_2,\cdots, k_{t_2}\}$, and $a_d\in \{k_{t_2+1}, k_{t_2+2}, \cdots, k_p\}$ is at least $(t_2-1)t_2^{d-3}$. By restricting the sum in \eqref{e:ratebound3} over $d$-tuples with $a_1,a_2,\cdots, a_{d-1}\in \{k_1, k_2,\cdots, k_{t_2}\}$, and $a_d\in \{k_{t_2+1}, k_{t_2+2}, \cdots, k_p\}$, we get
\begin{align*}
\sum_{(a_1,a_2,\cdots,a_{d})\in \bF_p^d,\atop a_1+a_2+\cdots+a_d=\bm0}1_{\frac{\min_{r} \fn_{a_r}}{\max_{r} \fn_{a_r}}\leq \frac{1}{1+\varepsilon}}\sum_{r=1}^{d-1}\fn_{a_r}
\geq 
(t_2-1)t_2^{d-3} \left(\frac{1}{(1+\varepsilon)^2 t_2}\right)^{d-1}
\geq \frac{1}{2(1+\varepsilon)^{2d-2}p} .
\end{align*}
The claim \eqref{e:ratebound3} follows. 
\end{enumerate}
This finishes the proof of Proposition \ref{p:ldpbound}. 
\end{proof}

\begin{proof}[Proof of Proposition \ref{p:ldp}]
We further decompose the set of non-equidistributed $p$-tuples $(n_0,n_1,\cdots, n_{p-1})$ into four classes:
\begin{enumerate}
\item $p$-tuples $(n_0,n_1,\cdots, n_{p-1})\in \cN$ with $\max_{0\leq j\leq p-1}|n_j/n-1/p|\leq \delta/p$.
\item $p$-tuples $(n_0,n_1,\cdots, n_{p-1})\in \cN$  with $\fb\ln n/n<|n_0/n-1|\leq \delta/p$.
\item $p$-tuples $(n_0,n_1,\cdots, n_{p-1})\in \cN$  with $|n_0/n-1|\leq \fb\ln n/n$.
\item The remaining non-equidistributed $p$-tuples.
\end{enumerate}

For the first class, $\max_{0\leq j\leq p-1}|n_j/n-1/p|\leq \delta/p$. The total number of such $p$-tuples is $e^{\OO(p\ln n)}$. 
Given a $p$-tuple $(n_0,n_1,\cdots,n_{p-1})$ in the first class,
we will derive a more precise estimate of \eqref{e:ratebound2}, by a perturbation argument. Let 
\begin{align*}\begin{split}
&\fn_j=(1+\delta_j)/p,\quad j=0,1,\cdots, p-1.
\end{split}\end{align*}
where $\sum_{0\leq j\leq p-1}\delta_j=0$, $\max_{0\leq j\leq p-1}|\delta_j|\leq \delta$, and $\sum_j \delta_j^2\geq \fb p^2\ln n/n$.
We denote $\bm\delta=(\delta_0, \delta_1,\cdots, \delta_{p-1})$.
We use Taylor expansion, and rewrite the righthand side of \eqref{e:sharprate} as
\begin{align}\begin{split}\label{e:sumexp}
&\phantom{{}={}}\sum_{j=1}^{p^{d-1}}\prod_{k=0}^{p-1}\fn_k^{\frac{d-1}{d}\bmw_j(k)}
=\frac{1}{p^{d-1}}\sum_{j=1}^{p^{d-1}}\prod_{k=0}^{p-1}(1+\delta_k)^{\frac{d-1}{d}\bmw_j(k)}
=\frac{1}{p^{d-1}}\sum_{j=1}^{p^{d-1}}e^{\frac{d-1}{d}\sum_{k=0}^{p-1}\left(\delta_k-(1+\OO(\delta))\frac{\delta_k^2}{2}\right)\bmw_j(k)}\\
&=\frac{1}{p^{d-1}}\sum_{j=1}^{p^{d-1}}1+\frac{d-1}{d}\sum_{k=0}^{p-1}\left(\delta_k-(1+\OO(\delta))\frac{\delta_k^2}{2}\right)\bmw_j(k)
+(1+\OO(\delta))\frac{(d-1)^2}{2d^2}
\left(\sum_{k=0}^{p-1}\delta_k \bmw_j(k)\right)^2
\\
&=1+\frac{d-1}{2d p^{d-1}}\left(-(1+\OO(\delta))dp^{d-2}\|\bm\delta\|_2^2+(1+\OO(\delta))\frac{d-1}{d}\sum_{j=1}^{p^{d-1}}\langle \bm\delta, \bmw_j\rangle^2\right)
\end{split}\end{align}
We recall from \eqref{e:cov} that 
\begin{align*}
\sum_{j=1}^{p^{d-1}}\bmw_j\bmw_j^t=dp^{d-2}I_{p}+d(d-1)p^{d-3}\bm1\bm1^t.
\end{align*}
Therefore, 
\begin{align}\begin{split}\label{e:finalbound}
\sum_{j=1}^{p^{d-1}}\prod_{k=0}^{p-1}\fn_k^{\frac{d-1}{d}\bmw_j(k)}
&=1+\frac{d-1}{2d p^{d-1}}\left(-(1+\OO(\delta))dp^{d-2}\|\bm\delta\|_2^2+(1+\OO(\delta))(d-1)p^{d-2}\|\bm\delta\|_2^2\right)\\
&=1-(1+\OO(\delta))\frac{d-1}{2dp}\|\bm\delta\|_2^2\leq 1-(1+\OO(\delta))\frac{d-1}{2d}\frac{\fb p\ln n}{n}.
\end{split}\end{align}
Thanks to  \eqref{e:sharprate}, we obtain an upper bound for the rate function from \eqref{e:finalbound}
\begin{align*}
I(\fn_0,\fn_1,\cdots, \fn_{p-1})\leq -\left(\frac{(d-1)\fb}{2d}+\oo(1)\right)\frac{p\ln n}{n}.
\end{align*}
The total contribution of terms in \eqref{e:noneqd} satisfying $\max_{0\leq j\leq p-1}|n_j/n-1/p|\leq \delta$ is bounded by
\begin{align}\label{e:case1d}
\exp\left\{-\left(\frac{(d-1)\fb}{2d}+\oo(1)\right)p\ln n+\OO(p\ln n)\right\}=\frac{\oo(1)}{n^{(d-2)}},
\end{align}
provided that we take $\fb$ sufficiently large.

For the second class, $\fb p\ln n/n<|n_0/n-1|\leq \delta/p$. The total number of such $p$-tuples is $e^{\OO(p\ln n)}$. Given a $p$-tuple $(n_0,n_1,\cdots,n_{p-1})$ in the second class, we will derive a more precise estimate of \eqref{e:ratebound2}, by a perturbation argument. Let $\fn_0=1-\delta_0$, where $\delta_0\leq \delta/p$.

We decompose the $d$-tuples $(a_1,a_2,\cdots,a_{d})\in \bF_p^d$ such that $a_1+a_2+\cdots+a_d=0$ into three sets: $\{(0,0,\cdots,0)\}$ and 
\begin{align*}\begin{split}
&\cA_1=\{(a_1,a_2,\cdots,a_{d})\in \bF_p^d: a_1+a_2+\cdots+a_d=0, \sum_{r=1}^d \bm1(a_r=0)=0\},\\
&\cA_2=\{(a_1,a_2,\cdots,a_{d})\in \bF_p^d: a_1+a_2+\cdots+a_d=0, 0<\sum_{r=1}^d \bm1(a_r=0)<d\}.
\end{split}\end{align*}
For any $d$-tuple $(a_1, a_2,\cdots, a_d)\in \cA_2$, we have $\min_{1\leq r\leq d}\fn_{a_r}/ \max_{1\leq r\leq d}\fn_{a_r}\leq \delta_0/(1-\delta_0)$, and 
\begin{align*}
\prod_{r=1}^d\fn_{a_r}^{\frac{d-1}{d}}\leq \left(4\delta_0\right)^{1/d}\frac{1}{d}\sum_{r=1}^d \prod_{1\leq s\leq d\atop s\neq r}\fn_{a_s}.
\end{align*}
Therefore, by the defining relation of the multiset $\cU_{d,p}$ as in \eqref{e:defcU}, we have
\begin{align}\begin{split}\label{e:midbound}
&\phantom{{}={}}\sum_{j=1}^{p^{d-1}}\prod_{k=0}^{p-1}\fn_k^{\frac{d-1}{d}\bmw_j(k)}
=\sum_{(a_1,a_2,\cdots,a_{d})\in \bF_p^d,\atop a_1+a_2+\cdots+a_d=\bm0}\prod_{r=1}^d \fn_{a_r}^{\frac{d-1}{d}}\\
&\leq\fn_0^{d-1}+\sum_{(a_1,a_2,\cdots,a_d)\in \cA_1}\frac{1}{d}\sum_{r=1}^d \prod_{1\leq s\leq d\atop s\neq r}\fn_{a_s}+\left(4\delta_0\right)^{1/d}\sum_{(a_1,a_2,\cdots,a_d)\in \cA_2}\frac{1}{d}\sum_{r=1}^d \prod_{1\leq s\leq d\atop s\neq r}\fn_{a_s}\\
&\leq\fn_0^{d-1}+(\fn_1+\fn_2+\cdots+\fn_{p-1})^{d-1}+\left(4\delta_0\right)^{1/d}(1-\fn_0^{d-1})\\
&=1+\delta_0^{d-1}-\left(1-\left(4\delta_0\right)^{1/d}\right)\left(1-\left(1-\delta_0\right)^{d-1}\right).
\end{split}\end{align}
Therefore, thanks to \eqref{e:sharprate}, we get
\begin{align*}
I(\fn_0,\fn_1,\cdots, \fn_{p-1})\leq -(1+\oo(1))(d-1)\delta_0
\end{align*} 
provided that $\delta_0$ is small enough. Thus, the total contribution of terms in \eqref{e:noneqd} satisfying $\fb p\ln n/n<|n_0/n-1|\leq \delta/p$ is bounded by
\begin{align}\label{e:case2d}
\exp\left\{-(1+\oo(1))\fb (d-1)p\ln n+\OO(p\ln n)\right\}=\frac{\oo(1)}{n^{(d-2)}},
\end{align}
provided that we take $\fb$ sufficiently large.

For the third class, $|n_0/n-1|\leq \fb p\ln n/n$. 
We rewrite \eqref{e:expldpd} in terms of number of walk paths,
\begin{align}\begin{split}\label{e:expldpd2}
&\sum_{(n_0,n_1,\cdots,n_{p-1})\in \cN}{n\choose n_0,n_1,\cdots, n_{p-1}}{dn\choose dn_0,dn_1,\cdots, dn_{p-1}}^{-1}\\
&\times|\{(\bmu_1,\bmu_2\cdots, \bmu_n)\in \cU_{d,p}^n: \bmu_1+\bmu_2+\cdots+\bmu_n=(dn_0, dn_{1},\cdots, dn_{p-1})\}|.
\end{split}\end{align}
Given a $p$-tuple $(n_0,n_1,\cdots,n_{p-1})$, in the third class with $n_0=n-m$ and $2\leq m\leq \fb p\ln n$, we reestimate the first factor on the righthand side of \eqref{e:expldpd2},
\begin{align}\begin{split}\label{e:entr}
{n\choose n_0,n_1,\cdots, n_{p-1}}{dn\choose dn_0,dn_1,\cdots, dn_{p-1}}^{-1}
\leq \frac{e^{\OO(m)}}{n^{(d-1)m}}\frac{(dn_1)! (dn_2)!\cdots (dn_{p-1})!}{n_1!n_2!\cdots n_{p-1}!}.
\end{split}\end{align}
For the number of walk paths in \eqref{e:expldpd2}, we recall that
\begin{align*}
\cU_{d,p}=\{\bmw_1, \bmw_2,\cdots, \bmw_{p^{d-1}}\}, \quad \bmw_1=(d,0,0,\cdots,0),
\end{align*}
and notice that $\bmw_j(1)+\bmw_j(2)+\cdots \bmw_j(p-1)\geq 2$ for $2\leq j\leq p^{d-1}$. Moreover, since $n_0=n-m$, we have $\bmu_1+\bmu_2+\cdots+\bmu_n=(dn_0, dn_{1},\cdots, dn_{p-1})$, with $dn_1+dn_2+\cdots+dn_{p-1}=dm$.
Therefore,  $\bmu_i=\bmw_1$ for all $1\leq i\leq n$, except for at most $dm/2$ of them, and we have
\begin{align}\begin{split}\label{e:pathest}
&\phantom{{}={}}|\{(\bmu_1,\bmu_2\cdots, \bmu_n)\in \cU_{d,p}^n: \bmu_1+\bmu_2+\cdots+\bmu_n=(dn_0, dn_{1},\cdots, dn_{p-1})\}|\\
&\leq \frac{(dm)!}{(dn_1)! (dn_2)!\cdots (dn_{p-1})!} \sum_{k=1}^{dm/2}n^k{dm-k-1\choose k-1}\leq\OO(1)\frac{(dm)!}{(dn_1)! (dn_2)!\cdots (dn_{p-1})!} n^{dm/2}.
\end{split}\end{align}
Putting \eqref{e:entr} and \eqref{e:pathest} together, the total contribution of terms in \eqref{e:noneqd} satisfying $|n_0/n-1|\leq \fb p\ln n/n$ is bounded by
\begin{align}\begin{split}\label{e:case3d}
&\phantom{{}={}}\sum_{m=2}^{\fb p\ln n}\sum_{n_1+n_2+\cdots+n_{p-1}=m}\frac{e^{\OO(m)}(dm)!}{n_1! n_2!\cdots n_{p-1}!}\frac{1}{n^{(d/2-1)m}}\\
&\leq
\sum_{m=2}^{\fb p\ln n}
\frac{e^{\OO(m)}(dm)!p^m}{m!n^{(d/2-1)m}}
\leq \sum_{m=2}^{\fb p\ln n}\left(\frac{\OO(1)p m^d}{m n^{d/2-1}}\right)^m
\leq \frac{\OO(1)p^2}{n^{d-2}}.
\end{split}\end{align}
provided that $p\ll n^{(d-2)/2d}$.

For the last class, the total number of such $p$-tuples is $e^{\OO(p\ln n)}$, and thanks to Proposition \ref{p:ldpbound}, each term is exponentially small, i.e. $e^{-c(\delta)n/p}$. Therefore the total contribution is
\begin{align}\label{e:case4d}
\exp\{-c(\delta) n/p+\OO(p\ln n)\}=\frac{\oo(1)}{n^{(d-2)}}.
\end{align}

The claim \eqref{e:noneqd} follows from combining the discussion of all four cases, \eqref{e:case1d}, \eqref{e:case2d}, \eqref{e:case3d} and \eqref{e:case4d}.
\end{proof}

\section{Proof of Theorem \ref{thm:smcount} for random undirected $d$-regular graphs}
Thanks to Proposition \ref{p:walkrepu}, we can rewrite the lefthand side of \eqref{e:smcountu} as
\begin{align*}
&\phantom{{}={}}\sum_{\bmv\in \bF_p^n\setminus{\bm0}}|\{\cG\in \mathsf{G}_{n,d}: A(\cG)\bmv=\bm0\}|
=\sum_{n_0,n_1,\cdots,n_{p-1}\in \bZ_{\geq 0}, n_0<n\atop n_0+n_1+\cdots+n_{p-1}=n}\sum_{\bmv\in \cS(n_0,n_1,\cdots, n_{p-1})} |\{\cG\in \mathsf{G}_{n,d}: A(\cG)\bmv=\bm0\}|\\
&=\sum_{n_0,n_1,\cdots,n_{p-1}\in \bZ_{\geq 0}, n_0<n\atop n_0+n_1+\cdots+n_{p-1}=n}{n\choose n_0,n_1,\cdots, n_{p-1}}\sum_{M\in \cM(n_0, n_1,\cdots, n_{p-1})}\prod_{0\leq i<j\leq p-1}m_{ij}!\prod_{i=0}^{p-1}\frac{m_{ii}!}{2^{m_{ii}/2}(m_{ii}/2)!}\\
& \times p^{(d-1)n}\prod_{i=0}^{p-1}\bP(X_1+X_2+\cdots+X_{n_i}=(m_{i0},m_{i1},\cdots, m_{ip-1}))\\
&=\sum_{M\in \cM}{n\choose n_0(M),n_1(M),\cdots, n_{p-1}(M)}\prod_{0\leq i<j\leq p-1}m_{ij}!\prod_{i=0}^{p-1}\frac{m_{ii}!}{2^{m_{ii}/2}(m_{ii}/2)!}\\
 &\times p^{(d-1)n}\prod_{i=0}^{p-1}\bP(X_1+X_2+\cdots+X_{n_i}=(m_{i0},m_{i1},\cdots, m_{ip-1})),
\end{align*}
where for any $M\in\cM$, $n_i(M)\deq\sum_{j=0}^{p-1}m_{ij}/d$ for $i=0,1,\cdots, p-1$.
Therefore Theorem \ref{thm:smcount} is equivalent to the following estiamte
\begin{align}\begin{split}\label{e:keyest}
&\frac{2^{nd/2}(nd/2)!}{(nd)!}\sum_{M\in \cM}{n\choose n_0(M),n_1(M),\cdots, n_{p-1}(M)}\prod_{0\leq i<j\leq p-1}m_{ij}!\prod_{i=0}^{p-1}\frac{m_{ii}!}{2^{m_{ii}/2}(m_{ii}/2)!}\\
& \times p^{(d-1)n}\prod_{i=0}^{p-1}\bP(X_1+X_2+\cdots+X_{n_i}=(m_{i0},m_{i1},\cdots, m_{ip-1}))=1+\oo(1).
\end{split}\end{align}

To prove \eqref{e:keyest}, we fix a large number $\fb>0$, and decompose those $p\times p$ symmetric matrices $M=[m_{ij}]_{0\leq i,j\leq p-1}\in \cM$ into two classes:
\begin{enumerate}
\item (Equidistributed) $\cE$ is the set of $p\times p$ symmetric matrices $M=[m_{ij}]_{0\leq i,j\leq p-1}\in \cM$, such that $\sum_{i,j=0}^{p-1}(m_{ij}/(dn)-1/p^2)^2\leq \fb\ln n/n$.
\item (Non-equidistributed) $\cN$ is the set of $p\times p$ symmetric matrices $M=[m_{ij}]_{0\leq i,j\leq p-1}\in \cM$ which are not equidistributed.
\end{enumerate}
In Section \ref{subs:lclt}, we estimate the sum of terms in \eqref{e:keyest} corresponding to equidistributed $p\times p$ symmetric matrices using a local central limit theorem. In Section \ref{subs:ldp}, we show that the sum of terms in \eqref{e:keyest} corresponding to non-equidistributed $p\times p$ symmtric matrices is small, via a large deviation estimate. Theorem \ref{thm:smcount} for random directed $d$-regular graphs follows from combining Proposition \ref{p:lclt} and Proposition \ref{p:ldp}.

\subsection{Local central limit theorem estimate}
\label{subs:lclt}

In this section, we estimate the sum of terms in \eqref{e:keyest} corresponding to equidistributed $p\times p$ symmetric matrices $M=[m_{ij}]_{0\leq i,j\leq p-1}\in \cM$ using a local central limit theorem.
\begin{proposition}\label{p:lclt}
Let $d\geq 3$ be a fixed integer, and a prime number $p$ such that $\gcd(p,d)=1$ and $p\ll n^{1/8}$. Then
for $n$ sufficiently large
\begin{align}\label{e:equ}
\sum_{M \in \cE}\sum_{\bmv\in \cS(n_0(M),n_1(M),\cdots, n_{p-1}(M))} |\{\cG\in \mathsf{G}_{n,d}: A(\cG)\bmv=\bm0\}|=\left(1+\bm1(p=2)+\OO\left(\frac{p^4(\ln n)^{3/2}}{\sqrt{n}}\right)\right)|\mathsf{G}_{n,d}|.
\end{align}
\end{proposition}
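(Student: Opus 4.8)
The plan is to run the Fourier-analytic argument of Proposition~\ref{p:lcltd}, now carried out row by row on the symmetric matrix $M$. Starting from \eqref{e:keyest} restricted to $M\in\cE$ and using Proposition~\ref{p:walkrepu}, the first step is to expand, by Stirling's formula, the combinatorial prefactor
\[
\frac{2^{nd/2}(nd/2)!}{(nd)!}\,{n\choose n_0(M),\dots,n_{p-1}(M)}\prod_{0\leq i<j\leq p-1}m_{ij}!\;\prod_{i=0}^{p-1}\frac{m_{ii}!}{2^{m_{ii}/2}(m_{ii}/2)!}\,p^{(d-1)n}.
\]
Writing $\fm_{ij}=m_{ij}/(dn)$ and $\fn_i=n_i(M)/n$, and using $\sum_{i,j}(\fm_{ij}-1/p^2)^2\leq\fb\ln n/n$ on $\cE$, this equals $1+\OO(p^4(\ln n)^{3/2}/\sqrt n)$ times an explicit prefactor times the exponential of a quantity whose leading (quadratic) part is a definite combination of $\sum_{i,j}(\fm_{ij}-1/p^2)^2$ and $\sum_i(\fn_i-1/p)^2$; the constants will be tracked so that this exponent cancels against the Gaussian integrals produced below, just as the exponents of \eqref{e:firstfactord} and \eqref{e:integ4d} cancelled in the directed case.

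Second, for each $i$ one estimates $\bP(X_1+\dots+X_{n_i}=(m_{i0},\dots,m_{i,p-1}))$ by the inverse Fourier formula, with $n_i\approx n/p$ playing the role of $n$. By Proposition~\ref{p:cfbound} the integral localizes to $\cup_j B_j(\delta)$; Taylor-expanding $\phi_{X-\bmmu}(O\bmx)$ as in \eqref{e:cf} and carrying out the Gaussian integral, each surviving $B_j(\delta)$ contributes
\[
p^{3/2}\Big(\tfrac{p}{2\pi d\,n_i}\Big)^{(p-1)/2}\exp\Big\{-\tfrac{d\,p\,n_i}{2}\big\|O^t\big(\tfrac{(m_{i0},\dots,m_{i,p-1})}{d\,n_i}-\tfrac{\bmmu}{d}\big)\big\|_2^2\Big\}
\]
up to a factor $1+\OO(p^2(\ln n)^{3/2}/\sqrt{n_i})$, where the surviving $j$'s are selected by the congruence $\sum_j j\,m_{ij}\equiv0\Mod p$ and the hypothesis $\gcd(p,d)=1$. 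Multiplying the $p$ row estimates and recombining with the Stirling exponent — using that $O^t$ is an isometry on $\{x\in\bR^p:\sum_k x_k=0\}$ — all exponentials collapse into a single nonnegative quadratic form $\exp\{-c\,n\,\|M/(dn)-J/p^2\|_2^2\}$ ($J$ the all-ones $p\times p$ matrix, $c$ explicit), leaving a polynomial prefactor in $p$ and $n$.

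Third, one sums over $M\in\cE$. The admissible matrices (symmetric, $2\mid m_{ii}$, satisfying the congruences $\sum_j j\,m_{ij}\equiv0$) form a coset of an explicit full-rank lattice $\Lambda$ inside the hyperplane $\{M=M^t:\sum_{i,j}m_{ij}=dn\}$ of real symmetric matrices; computing the covolume of $\Lambda$ and viewing the restricted sum in \eqref{e:keyest} as a Riemann sum, it converges to the Gaussian integral of the above quadratic form over that hyperplane, which equals $1$ for $p$ odd. The part with $\|M/(dn)-J/p^2\|_2^2\in(\fb\ln n/n,\delta]$ is discarded exactly as in \eqref{e:secondtermd}--\eqref{e:integ3d}, once $\fb$ and the localization constant are taken large. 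The constraint $p\ll n^{1/8}$ enters here: the per-row relative error $\OO(p^2(\ln n)^{3/2}/\sqrt{n_i})=\OO(p^{5/2}(\ln n)^{3/2}/\sqrt n)$, compounded over the $p$ rows and combined with the Stirling error of a $p\times p$ array, accumulates to the stated $\OO(p^4(\ln n)^{3/2}/\sqrt n)$, which must be $\oo(1)$.

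The step I expect to be the main obstacle is identifying the lattice $\Lambda$ and its covolume precisely, and — closely related — accounting for the extra term $\bm1(p=2)$. For odd $p$ the parity and mod-$p$ constraints defining $\cM$ cut the summation down to a single coset of $\Lambda$ that reassembles the full Gaussian integral of value $1$; when $p=2$ the same constraints are compatible with a second coset, whose Gaussian sum is asymptotically of the same size and supplies the additional $1$. Making this dichotomy precise, and checking that no analogous second contribution survives for odd $p$, is the delicate point; the remainder is a faithful, if lengthy, transcription of the directed-graph computation in Section~\ref{subs:lcltd}.
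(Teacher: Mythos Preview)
Your outline is essentially the paper's proof, but one structural claim is wrong and would derail the constant-tracking. After multiplying the $p$ row Gaussians by the Stirling prefactor, the combined exponent is \emph{not} $-c\,n\,\|M/(dn)-J/p^2\|_2^2$. From the paper's computation (equations \eqref{e:firstfactor}, \eqref{e:integ6}, \eqref{e:integ7}) one gets, with $A=[\fm_{ij}-1/p^2]\in{\rm Sym}_p^0$,
\[
-\frac{dnp^2}{4}\Tr A^2+\frac{(d-1)np}{2}\,\bm1^tA^2\bm1\;=\;\langle A,\cL(A)\rangle,
\qquad \cL(A)=-\tfrac{dnp^2}{4}A+\tfrac{(d-1)np}{4}\big(A\bm1\bm1^t+\bm1\bm1^tA\big).
\]
This operator has two distinct eigenvalues on ${\rm Sym}_p^0$: $-dnp^2/4$ on the $p(p-1)/2$-dimensional subspace with zero row sums, and $-np^2/4$ on the $(p-1)$-dimensional subspace $\{[a_i+a_j]:\sum a_i=0\}$. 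The Gaussian integral over ${\rm Sym}_p^0$ is therefore $\big(4\pi/(dnp^2)\big)^{(p^2-p)/4}\big(4\pi/(np^2)\big)^{(p-1)/2}$, and it is precisely this split that makes all powers of $d$, $p$, $n$ in the prefactors cancel to $1$. If you replace $\cL$ by a scalar multiple of the identity you will not get the right constant.

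Your account of the extra $\bm1(p=2)$ is also slightly off. The paper does not find a ``second coset''; rather, it replaces the constrained sum over $\cE$ by the unconstrained sum over $\tilde\cE$ (dropping $d\mid\sum_j m_{ij}$ and $p\mid\sum_j jm_{ij}$) and computes the ratio. For odd $p$ these $2p$ congruences are independent of the parity conditions $2\mid m_{ii}$ and of symmetry, so the ratio is $d^{p-1}p^{p}$. For $p=2$ the condition $\sum_j jm_{1j}\equiv0\Mod 2$ is $m_{10}\equiv0$, but by symmetry $m_{10}=m_{01}$ and the $i=0$ constraint already forces $m_{01}\equiv0$; one of the $p$ congruences is redundant and the ratio is $d^{p-1}p^{p-1}$. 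That missing factor of $p=2$ is exactly the $+\bm1(p=2)$ in \eqref{e:equ}.
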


\begin{proof}
Thanks to Proposition \ref{p:walkrepu}, we have
\begin{align}\begin{split}\label{e:explclt}
&\phantom{{}={}}\frac{1}{|\mathsf{G}_{n,d}|}\sum_{M \in \cE}\sum_{\bmv\in \cS(n_0(M),n_1(M),\cdots, n_{p-1}(M))} |\{\cG\in \mathsf{G}_{n,d}: A(\cG)\bmv=\bm0\}|\\
&=\frac{2^{nd/2}(nd/2)!}{(nd)!}\sum_{M\in \cE}{n\choose n_0(M),n_1(M),\cdots, n_{p-1}(M)}\prod_{0\leq i<j\leq p-1}m_{ij}!\prod_{i=0}^{p-1}\frac{m_{ii}!}{2^{m_{ii}/2}(m_{ii}/2)!}\\
& \phantom{{}={}}\times p^{(d-1)n}\prod_{i=0}^{p-1}\bP(X_1+X_2+\cdots+X_{n_i}=(m_{i0},m_{i1},\cdots, m_{ip-1})),
\end{split}\end{align}
where $X_1,X_2,\cdots, X_{n_i}$ are independent copies of $X$, which is uniform distributed over $\cU_{d,p}$ as defined in \eqref{e:defcU}.
In the rest of the proof, we simply write $n_i(M)$ as $n_i$ for $0\leq i\leq p-1$. 

For an equidistributed $p\times p$ symmetric matrix $M=[m_{ij}]_{0\leq i,j\leq p-1}$, we denote $\fm_{ij}=m_{ij}/(dn)$ for $i,j=0,1,\cdots, p-1$, and $\fn_i=n_i/n$ for $i=0,1,\cdots, p-1$. Then we have $\fn_i=\sum_{j=0}^{p-1}\fm_{ij}$ for $i=0,1,\cdots, p-1$. Moreover, by our definition of equidistributed, 
$\sum_{i,j=0}^{p-1}(\fm_{ij}-1/p^2)^2\leq \fb\ln n/n$, and by the AM-GM inequality
$\sum_{j=0}^{p-1}(\fn_j-1/p)^2\leq \fb p\ln n/n$. We estimate the first factor on the righthand side of \eqref{e:explclt} using Stirling's formula,
\begin{align}\begin{split}\label{e:firstfactor}
&\phantom{{}={}}\frac{2^{nd/2}(nd/2)!}{(nd)!}\sum_{M\in \cE}{n\choose n_0,n_1,\cdots, n_{p-1}}\prod_{0\leq i<j\leq p-1}m_{ij}!\prod_{i=0}^{p-1}\frac{m_{ii}!}{2^{m_{ii}/2}(m_{ii}/2)!}
p^{(d-1)n}\\
&=\left(1+\OO\left(\frac{p^4}{n}\right)\right)
2^{p/2}\sqrt{\pi n}\frac{\prod_{0\leq i<j\leq p-1}\sqrt{2\pi m_{ij}}}{\prod_{i=0}^{p-1}\sqrt{2\pi n_i}}
e^{\frac{dn}{2}\sum_{i,j=0}^{p-1}{\fm_{ij}}\ln \fm_{ij}-n\sum_{j=0}^{p-1}\fn_j\ln \fn_j }p^{(d-1)n}\\
&=
\left(1+\OO\left(\frac{p^3(\ln n)^{3/2}}{\sqrt{n}}\right)\right)
2^{p/2}\sqrt{\pi n}\frac{\prod_{0\leq i<j\leq p-1}\sqrt{2\pi dn/p^2}}{\prod_{i=0}^{p-1}\sqrt{2\pi n/p}}
e^{\frac{dnp^2}{4}\sum_{i,j=0}^{p-1}\left(\fm_{ij}-\frac{1}{p^2}\right)^2-\frac{np}{2}\sum_{j=0}^{p-1}\left(\fn_j-\frac{1}{p}\right)^2}.
\end{split}\end{align}

In the following, we estimate the second factor on the righthand side of \eqref{e:explclt}, i.e. $\bP(S_{n_i}=(m_{i0},m_{i1},\cdots, m_{ip-1}))$, where $S_{n_i}=X_1+X_2+\cdots+X_{n_i}$.
We recall that $X_1, X_2,\cdots, X_{n_i}$ are independent copies of $X$, which is uniformly distributed over the multiset $\cU_{d,p}$ as defined in \eqref{e:defcU}. 
We use the notation $\bmm_i=(m_{i0}, m_{i1}, m_{i2}, \cdots, m_{ip-1})$. By inverse Fourier formula
\begin{align*}\begin{split}
\bP(S_{n_i}=\bmm_i)
&=\frac{1}{(2\pi)^p}\int_{2\pi \bR^p/\bZ^p}\phi_X^{n_i}(\bmt)e^{-\ri \langle \bmt, \bmm_i\rangle}\rd \bmt\\
&=\frac{1}{(2\pi)^p}\int_{2\pi \bR^p/\bZ^p}\phi_{X-\bmmu}^{n_i}(\bmt)e^{-\ri \langle \bmt, \bmm_i-n_i\bmmu\rangle}\rd \bmt,
\end{split}\end{align*}
where $\phi_X(\bmt)$ and $\phi_{X-\bmmu}(\bmt)$ are the characteristic functions of $X$ and $X-\bmmu$ respectively. 
We recall the domains $B_j(\delta)$ for $j=0,1,2,\cdots, p-1$ from \eqref{e:domainB}. Thanks to Proposition \ref{p:cfbound}, the characteristic function $|\phi^n_{X-\bmmu}(\bmt)|$ is exponentially small outside those sets $B_j(\delta)$.
\begin{align}\begin{split}\label{e:integ1}
\bP(S_{n_i}=\bmm_i)
&=\frac{1}{(2\pi)^p}\sum_{j=0}^{p-1}\int_{2\pi B_j(\delta)}\phi_{X-\bmmu}^{n_i}(\bmt)e^{-\ri \langle \bmt, \bmm_i-n_i\bmmu\rangle}\rd \bmt + e^{-c(\delta)n_i/p^3}\\
&=\frac{p}{(2\pi)^p}\int_{2\pi B_0(\delta)}\phi_{X-\bmmu}^{n_i}(\bmt)e^{-\ri \langle \bmt, \bmm_i-n_i\bmmu\rangle}\rd \bmt + e^{-c(\delta)n_i/p^3},
\end{split}\end{align}
where we used the fact that $\sum_{j=0}^{p-1}jm_{ij}\equiv 0\Mod p$, and the integrand is translation invariant by vectors $2\pi(0,1/p, 2/p,\cdots, p-1/p)\bZ$.
For any $\bmt\in  B_0(\delta)$, by definition there exists $\bmx\in \bR^{p-1}$ with $\|\bmx\|^2_2\leq \delta$ and $y\in [0,2\sqrt{p}\pi]$, such that $\bmt=Q(\bmx,y)=O\bmx+(y/\sqrt{p})\bm1$. By a change of variable, we can rewrite \eqref{e:integ1} as
\begin{align}\begin{split}\label{e:integ2}
&\phantom{{}={}}\frac{p}{(2\pi)^p}\int_{2\pi B_0(\delta)}\phi_{X-\bmmu}^{n_i}(\bmt)e^{-\ri \langle \bmt, \bmm_i-n_i\bmmu\rangle}\rd \bmt\\
&=\frac{p}{(2\pi)^{p}}\int_{\{\bmx\in \bR^{p-1}:\|\bmx\|^2_2\leq \delta\}\times [0,2\sqrt{p}\pi]}\phi_{X-\bmmu}^{n_i}(Q(\bmx,y))e^{-\ri \langle Q(\bmx,y), \bmm_i-n_i\bmmu\rangle}\rd \bmx\rd y\\
&=\frac{p^{3/2}}{(2\pi)^{p-1}}\int_{\{\bmx\in \bR^{p-1}:\|\bmx\|^2_2\leq \delta\}}\phi_{X-\bmmu}^{n_i}(O\bmx)e^{-\ri \langle O\bmx, \bmm_i-n_i\bmmu\rangle}\rd \bmx,
\end{split}\end{align}
where we used that $\langle \bm1, X-\bmmu\rangle=0$ and $\langle\bm1, \bmm_i-n_i\bmmu\rangle=0$. We recall the estimate of the characteristic function from \eqref{e:cf},
\begin{align}\begin{split}\label{e:cfu}
\phi_{X-\bmmu}(O\bmx)
=1-\frac{d}{2p}\|\bmx\|_2^2+\OO\left(\frac{\|\bmx\|_2^3}{p}\right).
\end{split}\end{align}
Fix a large constant $\fc$, which will be chosen later. For $\fc p^4\ln n/n\leq \|\bmx\|_2^2\leq \delta$, we have
\begin{align}\label{e:integ3}
|\phi_{X-\bmmu}(O\bmx)|^{n_i}\leq \exp\left\{-\left(\frac{\fc d}{2}+\oo(1)\right)p^2\ln n\right\},
\end{align}
which turns out to be negligible provided $\fc$ is large enough. In the following we will restrict the integral \eqref{e:integ2} on the domain 
$\{\bmx\in \bR^{p-1}:\|\bmx\|^2_2\leq \fc p^4\ln n/n\}$. From \eqref{e:cfu}, on the domain $\{\bmx\in \bR^{p-1}:\|\bmx\|^2_2\leq \fc p^4 \ln n/n\}$, we have 
\begin{align*}
\phi_{X-\bmmu}^{n_i}(O\bmx)=\left(1+\OO\left(\frac{p^4(\ln n)^{3/2}}{n^{1/2}}\right)\right)e^{-\frac{dn_i}{2p}\|\bmx\|^2_2},
\end{align*}
and
\begin{align}\begin{split}\label{e:integ4}
&\phantom{{}={}}\frac{p^{3/2}}{(2\pi)^{p-1}}\int_{\{\bmx\in \bR^{p-1}:\|\bmx\|^2_2\leq \fc p^4\ln n/n\}}\phi_{X-\bmmu}^{n_i}(O\bmx)e^{-\ri \langle O\bmx, \bmm_i-n_i\bmmu\rangle}\rd \bmx\\
&=\left(1+\OO\left(\frac{p^4(\ln n)^{3/2}}{n^{1/2}}\right)\right)\frac{p^{3/2}}{(2\pi)^{p-1}}\int_{\{\bmx\in \bR^{p-1}:\|\bmx\|^2_2\leq \fc p^4\ln n/n\}}e^{-\frac{dn_i}{2p}\|\bmx\|^2_2}e^{-\ri \langle \bmx, O^t(\bmm_i-n_i\bmmu)\rangle}\rd \bmx\\
&=\left(1+\OO\left(\frac{p^4(\ln n)^{3/2}}{n^{1/2}}\right)\right)\frac{p^{3/2}}{(2\pi)^{p-1}}\int_{ \bR^{p-1}}e^{-\frac{dn_i}{2p}\|\bmx\|^2_2}e^{-\ri \langle \bmx, O^t(\bmm_i-n_i\bmmu)\rangle}\rd \bmx+\OO\left(e^{-\left(\frac{\fc d}{2}+\oo(1)\right)p^2\ln n}\right)\\
&=\left(1+\OO\left(\frac{p^4(\ln n)^{3/2}}{n^{1/2}}\right)\right)\frac{p^{3/2}}{(2\pi)^{p-1}}\int_{ \bR^{p-1}}e^{-\frac{dn_i}{2p}\|\bmx\|^2_2}e^{-\ri \langle \bmx, O^t(\bmm_i-n_i\bmmu)\rangle}\rd \bmx+\OO\left(e^{-\left(\frac{\fc d}{2}+\oo(1)\right)p^2\ln n}\right)\\
&=\left(1+\OO\left(\frac{p^4(\ln n)^{3/2}}{n^{1/2}}\right)\right)p^{3/2}
\left(\frac{p}{2\pi d n_i}\right)^{(p-1)/2}e^{-\frac{n_i pd}{2}\left\|O^t \left(\frac{\bmm_i}{d n_i}-\frac{\bmmu}{d}\right)\right\|_2^2}
+\OO\left(e^{-\left(\frac{\fc d}{2}+\oo(1)\right)p^2\ln n}\right)\\
&=\left(1+\OO\left(\frac{p^4(\ln n)^{3/2}}{n^{1/2}}\right)\right)p^{3/2}
\left(\frac{p^2}{2\pi d n}\right)^{(p-1)/2}e^{-\frac{nd}{2}\sum_{j=0}^{p-1}\left(\frac{\fm_{ij}}{\fn_i}-\frac{1}{p}\right)^2}
+\OO\left(e^{-\left(\frac{\fc d}{2}+\oo(1)\right)p^2\ln n}\right).
\end{split}\end{align}
By our definition that $M$ is equidistributed, 
$\sum_{i,j=0}^{p-1}(\fm_{ij}-1/p^2)^2\leq \fb\ln n/n$, and by the AM-GM inequlity
$\sum_{j=0}^{p-1}(\fn_j-1/p)^2\leq \fb p\ln n/n$.  We can rewrite the exponent in \eqref{e:integ4} as
\begin{align}\label{e:integ5}
-\frac{nd}{2}\sum_{j=0}^{p-1}\left(\frac{\fm_{ij}}{\fn_i}-\frac{1}{p}\right)^2
=-\frac{dnp}{2}\left(p\sum_{j=0}^{p-1}\left(\fm_{ij}-\frac{1}{p^2}\right)^2-\left(\fn_i-\frac{1}{p}\right)^2\right)+\OO\left(\frac{p^3(\ln n)^{3/2}}{n^{1/2}}\right).
\end{align}
It follows by combining \eqref{e:integ4} and \eqref{e:integ5},  
\begin{align}\begin{split}\label{e:integ6}
&\prod_{i=0}^{p-1}\bP(X_1+X_2+\cdots+X_{n_i}=(m_{i0},m_{i1},\cdots, m_{ip-1}))=\OO\left(e^{-\left(\frac{\fc d}{2}+\oo(1)\right)p^2\ln n}\right)\\
&+\left(1+\OO\left(\frac{p^4(\ln n)^{3/2}}{n^{1/2}}\right)\right)p^{3p/2}
\left(\frac{p^2}{2\pi d n}\right)^{(p^2-p)/2}e^{-\frac{dnp}{2}\left(p\sum_{i,j=0}^{p-1}\left(\fm_{ij}-\frac{1}{p^2}\right)^2-\sum_{i=0}^{p-1}\left(\fn_i-\frac{1}{p}\right)^2\right)}.
\end{split}\end{align}
The exponents in \eqref{e:firstfactor} and \eqref{e:integ6} cancel
\begin{align}
\begin{split}
\label{e:integ7}
&\phantom{{}={}}\frac{dnp^2}{4}\sum_{i,j=0}^{p-1}\left(\fm_{ij}-\frac{1}{p^2}\right)^2-\frac{np}{2}\sum_{j=0}^{p-1}\left(\fn_j-\frac{1}{p}\right)^2-\frac{dnp}{2}\left(p\sum_{i,j=0}^{p-1}\left(\fm_{ij}-\frac{1}{p^2}\right)^2-\sum_{i=0}^{p-1}\left(\fn_i-\frac{1}{p}\right)^2\right)\\
&=-\frac{dnp^2}{4}\sum_{i,j=0}^{p-1}\left(\fm_{ij}-\frac{1}{p^2}\right)^2+\frac{(d-1)np}{2}\sum_{j=0}^{p-1}\left(\fn_j-\frac{1}{p}\right)^2.
\end{split}\end{align}
Therefore, by combining the estimates \eqref{e:firstfactor}, \eqref{e:integ6} and \eqref{e:integ7}, we conclude that for any equidistributed $p\times p$ symmetric matrices $M=[m_{ij}]_{0\leq i,j\leq p-1}\in \cM$,
\begin{align}\begin{split}\label{e:integ8}
&\frac{1}{|\mathsf{G}_{n,d}|}\sum_{\bmv\in \cS(n_0(M),n_1(M),\cdots, n_{p-1}(M))} |\{\cG\in \mathsf{G}_{n,d}: A(\cG)\bmv=\bm0\}|=\OO\left(e^{-\left(\frac{\fc d}{2}-\frac{\fb d}{4}+\oo(1)\right)p^2\ln n}\right)\\
&+\left(1+\OO\left(\frac{p^4(\ln n)^{3/2}}{n^{1/2}}\right)\right)\frac{2^{(p-1)/2}p^{(p^2+3p)/2}}{d^{(p^2-p)/4}(2\pi n)^{(p^2+p-2)/4}}
e^{-\frac{dnp^2}{4}\sum_{i,j=0}^{p-1}\left(\fm_{ij}-\frac{1}{p^2}\right)^2+\frac{(d-1)np}{2}\sum_{j=0}^{p-1}\left(\fn_j-\frac{1}{p}\right)^2}.
\end{split}\end{align}
For the first term on the righthand side of \eqref{e:integ8}, we notice that the total number of $p\times p$ symmetric matrices in $\cE$ is bounded by $e^{p^2\ln n}$, 
\begin{align}\label{e:firstterm}
\sum_{M\in \cE}\OO\left(e^{-\left(\frac{\fc d}{2}-\frac{\fb d}{4}+\oo(1)\right)p^2\ln n}\right)
\leq \OO\left(e^{-\left(\frac{\fc d}{2}-\frac{\fb d}{4}-1+\oo(1)\right)p^2\ln n}\right),
\end{align}
which is negligible provided $\fc$ is large enough.

For the second term on the righthand side of \eqref{e:integ8}, we denote $\tilde \cE$ the set of $p\times p$ symmetric matrices $M=[m_{ij}]_{0\leq i,j\leq p}$ such that 
\begin{enumerate}
\item$m_{ij}=m_{ji}\in \bZ_{\geq 0}$ for $0\leq i,j\leq p-1$ and $2|m_{ii}$ for $0\leq i\leq p-1$.
\item $\sum_{i,j=0}^{p-1}m_{ij}=dn$, and $\sum_{i,j=0}^{p-1}(m_{ij}/(dn)-1/p^2)^2\leq \fb\ln n/n$.
\end{enumerate}
The set $\cE$ is a subset of $\tilde \cE$ with the extra constraints: $\sum_{j=0}^{p-1}m_{ij}\equiv 0\Mod d$, and $\sum_{j=0}^{p-1}jm_{ij}\equiv 0 \Mod p$ for $i=0,1,\cdots, p-1$. In the following we prove that the sum over the set $\cE$ in \eqref{e:explclt} can be replaced by the sum over the set $\tilde \cE$ with a negligible error. We concentrate on the case $p$ is odd, and remark the modification for $p=2$ case later.

By our assumption $\gcd (p,d)=1$, for any vectors $\bmr=(r_0,r_1,\cdots, r_{p-1})\in \{0,1,\cdots,d-1\}^{p}$ with $r_0+r_1+\cdots+r_{p-1}\equiv 0 \Mod d$, and $\bms=(s_0,s_1,\cdots, s_{p-1})\in \{0,1,\cdots, p-1\}^p$, there exists a $p\times p$ matrix $[\Delta m^{(\bmr, \bms)}_{ij}]_{0\leq i,j\leq p-1}$ with entries size $\OO(p)$ (not unique), such that 
\begin{enumerate}
\item$\Delta m^{(\bmr, \bms)}_{ij}=\Delta m^{(\bmr,\bms)}_{ji}\in \bZ_{\geq 0}$ for $0\leq i,j\leq p-1$ and $2|\Delta m^{(\bmr,\bms)}_{ii}$ for $0\leq i\leq p-1$.
\item $\sum_{i,j=0}^{p-1}\Delta m^{(\bmr,\bms)}_{ij}=0$.
\item 
$\sum_{j=0}^{p-1}\Delta m^{(\bmr,\bms)}_{ij}\equiv r_i\Mod d$, $\sum_{j=0}^{p-1}j\Delta m_{ij}^{(\bmr,\bms)}\equiv s_i \Mod p$, for $i=0,1,\cdots, p-1$.
\end{enumerate}
For the term on the righthand side of \eqref{e:integ8} corresponding to $M=[m_{ij}]_{0\leq i,j\leq p-1}\in \cE$, we can rewrite it as an average of terms corresponding to $M^{(\bmr, \bms)}=[m^{(\bmr,\bms)}_{ij}\deq m_{ij}+\Delta m_{ij}^{(\bmr, \bms)}]_{0\leq i,j\leq p-1}\in \tilde \cE$, for $\bmr=(r_0,r_1,\cdots, r_{p-1})\in \{0,1,\cdots,d-1\}^{p}$ with $r_0+r_1+\cdots+r_{p-1}\equiv 0 \Mod d$, and $\bms=(s_0,s_1,\cdots, s_{p-1})\in \{0,1,\cdots, p-1\}^p$, 
\begin{align}\begin{split}\label{e:sumtE0}
&\phantom{{}={}}d^{p-1}p^p\left(1+\OO\left(\frac{p^4(\ln n)^{1/2}}{n^{1/2}}\right)\right)e^{-\frac{dnp^2}{4}\sum_{i,j=0}^{p-1}\left(\frac{m_{ij}}{dn}-\frac{1}{p^2}\right)^2+\frac{(d-1)np}{2}\sum_{j=0}^{p-1}\left(\frac{n_i}{n}-\frac{1}{p}\right)^2}\\
&=\sum_{\bmr\in \{0,1,\cdots, d-1\}^p\atop \sum_{i=0}^{p-1}r_i\equiv 0\Mod d}
\sum_{\bms\in \{0,1,\cdots, p-1\}^p}
e^{-\frac{dnp^2}{4}\sum_{i,j=0}^{p-1}\left(\frac{m_{ij}^{(\bmr,\bms)}}{dn}-\frac{1}{p^2}\right)^2+\frac{(d-1)np}{2}\sum_{j=0}^{p-1}\left(\frac{n_i^{(\bmr,\bms)}}{n}-\frac{1}{p}\right)^2},
\end{split}\end{align}
where $n_i^{(\bmr,\bms)}=\sum_{j=0}^{p-1}m_{ij}^{(\bmr,\bms)}$, for $i=0,1,\cdots, p-1$. We sum \eqref{e:sumtE0} over all the $p\times p$ symmetric matrices $M=[m_{ij}]_{0\leq i,j\leq p-1}\in\cE$, and get
\begin{align}\begin{split}\label{e:sumtE}
&\phantom{{}={}}d^{p-1}p^p\sum_{M\in \cE}e^{-\frac{dnp^2}{4}\sum_{i,j=0}^{p-1}\left(\fm_{ij}-\frac{1}{p^2}\right)^2+\frac{(d-1)np}{2}\sum_{j=0}^{p-1}\left(\fn_j-\frac{1}{p}\right)^2}\\
&=\left(1+\OO\left(\frac{p^4(\ln n)^{1/2}}{n^{1/2}}\right)\right)\sum_{M\in \tilde \cE}e^{-\frac{dnp^2}{4}\sum_{i,j=0}^{p-1}\left(\fm_{ij}-\frac{1}{p^2}\right)^2+\frac{(d-1)np}{2}\sum_{j=0}^{p-1}\left(\fn_j-\frac{1}{p}\right)^2}.
\end{split}\end{align}

We remark that if $p=2$, we have instead that 
\begin{align*}\begin{split}
&\phantom{{}={}}d^{p-1}p^{p-1}\sum_{M\in \cE}e^{-\frac{dnp^2}{4}\sum_{i,j=0}^{p-1}\left(\fm_{ij}-\frac{1}{p^2}\right)^2+\frac{(d-1)np}{2}\sum_{j=0}^{p-1}\left(\fn_j-\frac{1}{p}\right)^2}\\
&=\left(1+\OO\left(\frac{p^4(\ln n)^{1/2}}{n^{1/2}}\right)\right)\sum_{M\in \tilde \cE}e^{-\frac{dnp^2}{4}\sum_{i,j=0}^{p-1}\left(\fm_{ij}-\frac{1}{p^2}\right)^2+\frac{(d-1)np}{2}\sum_{j=0}^{p-1}\left(\fn_j-\frac{1}{p}\right)^2},
\end{split}\end{align*}
which differs from \eqref{e:sumtE} by a factor of $2$. As a consequence, this leads to $
\sum_{\bmv\in \bF_p^n\setminus{\bm0}}|\{\cG\in \mathsf{G}_{n,d}: A(\cG)\bmv=\bm0\}|= (2+\oo(1))|\mathsf{G}_{n,d}|
$.

In the following we estimate the sum in \eqref{e:sumtE}. The set of points $[m_{ij}/dn-1/p^2]_{0\leq i,j\leq p-1}$ for $M=[m_{ij}]_{0\leq i,j\leq p-1}\in \tilde \cE$ is a subset of a lattice in ${\rm Sym}_p^0$, the Hilbert space of $p\times p$ real symmetric matrices with total sum zero, and inner product $\langle A, B\rangle=\Tr AB$. A set of base for this lattice is given by
\begin{align*}
(e_{ij}+e_{ji}-2e_{00})/dn, \quad 0\leq i<j\leq p-1, \quad (2e_{ii}-2e_{00})/dn, \quad 1\leq i\leq p-1.
\end{align*}
The volume of the fundamental domain is $2^{(p^2+3p-4)/4}p(dn)^{-(p^2+p-2)/2}$. By viewing \eqref{e:sumtE} as a Riemann sum, we can rewrite it as an integral on the space ${\rm Sym}_p^0$.
\begin{align}
\begin{split}\label{e:gaussianI}
&\phantom{{}={}}2^{(p^2+3p-4)/4}p(dn)^{-(p^2+p-2)/2}\sum_{M\in \tilde \cE}e^{-\frac{dnp^2}{4}\sum_{i,j=0}^{p-1}\left(\fm_{ij}-\frac{1}{p^2}\right)^2+\frac{(d-1)np}{2}\sum_{j=0}^{p-1}\left(\fn_j-\frac{1}{p}\right)^2}\\
&=\left(1+\OO\left(\frac{p^4(\ln n)^{1/2}}{n^{1/2}}\right)\right)
\int_{A\in {\rm Sym}_p^0:\|A\|_2^2\leq \fb\ln n/n}e^{-\frac{dnp^2}{4}\Tr A^2+\frac{(d-1)np}{2}\bm1^t A^2 \bm1}\rd \vol(A).
\end{split}
\end{align}
We can rewrite the exponent as a quadratic form on the space ${\rm Sym}_p^{0}$,
\begin{align*}
-dnp^2\Tr A^2/4+(d-1)np\bm1^t A^2 \bm1/2
=\langle A, \cL(A)\rangle,
\end{align*}
where the self-adjoint operator $\cL: {\rm Sym}_p^0\mapsto {\rm Sym}_p^0$ is given by $\cL(A)=-dnp^2A/4+(d-1)npA \bm1\bm1^t/4+(d-1)np\bm 1\bm1^tA /4$. The self-adjoint operator $\cL$ is diagonalized by 
\begin{enumerate}
\item If $A\in{\rm Sym}_p^0$, with row sums and column sums zero, then $\cL(A)=(-dnp^2/4)A$. The total dimension of such matrices is $p(p-1)/2$.
\item If $A=[a_i+a_j]_{0\leq i,j\leq p-1}$ for some vector $\bma=(a_0,a_1,\cdots, a_{p-1})\in \bR^{p}$ with $a_0+a_1+\cdots+a_{p-1}=0$, then $\cL(A)=(-np^2/4)A$. The total dimension of such matrices is $p-1$.
\end{enumerate}
From the discussion above, using the eigenvectors of the self-adjoint operator $\cL$ as a base, the integral \eqref{e:gaussianI} decomposes into a product of Gaussian integrals, which can be estimated explicitly.
\begin{align}\begin{split}\label{e:integral}
&\phantom{{}={}}\int_{A\in {\rm Sym}_p^0:\|A\|_2^2\leq \fb\ln n/n}e^{-\frac{dnp^2}{4}\Tr A^2+\frac{(d-1)np}{2}\bm1^t A^2 \bm1}\rd \vol(A)\\
&=\int_{A\in {\rm Sym}_p^0}e^{-\frac{dnp^2}{4}\Tr A^2+\frac{(d-1)np}{2}\bm1^t A^2 \bm1}\rd \vol(A)+\OO\left(e^{-\left(\frac{\fb}{4}+\oo(1)\right)p^2\ln n}\right)\\
&=\left(\frac{4\pi}{dnp^2}\right)^{(p^2-p)/4}\left(\frac{4\pi}{np^2}\right)^{(p-1)/2}+\OO\left(e^{-\left(\frac{\fb}{4}+\oo(1)\right)p^2\ln n}\right).
\end{split}\end{align}
We can estimate the total contribution in \eqref{e:equ} from the second term on the righthand side of \eqref{e:integ8}, by combining the estimates \eqref{e:sumtE}, \eqref{e:gaussianI} and \eqref{e:integral}
\begin{align}\begin{split}\label{e:secondterm}
&\phantom{{}={}}\left(1+\OO\left(\frac{p^4(\ln n)^{3/2}}{n^{1/2}}\right)\right)\frac{2^{(p-1)/2}p^{(p^2+3p)/2}}{d^{(p^2-p)/4}(2\pi n)^{(p^2+p-2)/4}}
\sum_{M\in \cE}e^{-\frac{dnp^2}{4}\sum_{i,j=0}^{p-1}\left(\fm_{ij}-\frac{1}{p^2}\right)^2+\frac{(d-1)np}{2}\sum_{j=0}^{p-1}\left(\fn_j-\frac{1}{p}\right)^2}\\
&=\left(1+\OO\left(\frac{p^4(\ln n)^{3/2}}{n^{1/2}}\right)\right)
+\OO\left(e^{-\left(\frac{\fb}{4}+\oo(1)\right)p^2\ln n+\frac{p^2+p-2}{4}\ln (np^2)}\right)
=1+\OO\left(\frac{p^4(\ln n)^{3/2}}{n^{1/2}}\right),
\end{split}\end{align}
provided $\fb$ is large enough. Proposition \ref{p:lclt} follows from combining \eqref{e:firstterm} and \eqref{e:secondterm}.
\end{proof}

\subsection{Large deviation estimate}
\label{subs:ldp}

In this section, we show that the sum of terms in \eqref{e:keyest} corresponding to non-equidistributed $p\times p$ symmetric matrices $M=[m_{ij}]_{0\leq i,j\leq p-1}$ is small. 
\begin{proposition}\label{p:ldp}
Let $d\geq 3$ be a fixed integer, and a prime number $p$ such that $\gcd(p,d)=1$ and $p\ll n^{(d-2)/(5d-6)}$. Then for $n$ sufficiently large,
\begin{align}\label{e:noneq}
\frac{1}{|\mathsf{G}_{n,d}|}\sum_{M \in \cN}\sum_{\bmv\in \cS(n_0(M),n_1(M),\cdots, n_{p-1}(M))} |\{\cG\in \mathsf{G}_{n,d}: A(\cG)\bmv=\bm0\}|\leq \frac{\OO(p^{2d})}{n^{(d-2)}}.
\end{align}
\end{proposition}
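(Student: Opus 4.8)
The plan is to reproduce, with $p\times p$ symmetric matrices $M$ playing the role of the $p$-tuples $(n_0,\ldots,n_{p-1})$, the four-step argument used to prove Proposition~\ref{p:ldpd}. By Proposition~\ref{p:walkrepu} the left-hand side of \eqref{e:noneq} equals
\[
\frac{2^{nd/2}(nd/2)!}{(nd)!}\sum_{M\in\cN}\binom{n}{n_0(M),\ldots,n_{p-1}(M)}\prod_{0\leq i<j\leq p-1}m_{ij}!\prod_{i=0}^{p-1}\frac{m_{ii}!}{2^{m_{ii}/2}(m_{ii}/2)!}\,p^{(d-1)n}\prod_{i=0}^{p-1}\bP(S_{n_i}=\bmm_i),
\]
where $S_{n_i}=X_1+\cdots+X_{n_i}$ and $\bmm_i=(m_{i0},\ldots,m_{i,p-1})$. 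I would bound the combinatorial prefactor by Stirling's formula (as in \eqref{e:firstfactor}, but only to exponential precision) and each $\bP(S_{n_i}=\bmm_i)$ by the exponential Markov inequality with the row-wise tilt analogous to that of \eqref{e:sharprate}; this reduces the expression to $\sum_{M\in\cN}e^{\OO(p^2)}e^{nJ(\fm)}$ for an explicit rate function $J\leq 0$ of the normalised matrix $\fm=[\fm_{ij}]$, $\fm_{ij}=m_{ij}/(dn)$, which is zero only at the equidistributed point $\fm_{ij}\equiv 1/p^2$ and tends to $0$ as $\fm$ approaches the trivial matrix $\fm_{00}=1$. The key structural ingredient would be a matrix analogue of Proposition~\ref{p:ldpbound}: for small $\delta>0$ there is $c(\delta)>0$ with $J(\fm)\leq-c(\delta)/p$ unless $\max_{i,j}|\fm_{ij}-1/p^2|\leq\delta/p^2$ or $\fm_{00}\geq 1-\delta/p$. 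Since the row sums $\fn_i=\sum_j\fm_{ij}$ are exactly the symbol densities of the underlying null vector, whenever $\bm\fn$ lies outside the $\delta/p$-balls around $(1/p,\ldots,1/p)$ and $(1,0,\ldots,0)$ this would follow from Proposition~\ref{p:ldpbound} applied to $\bm\fn$ (together with the AM--GM step in \eqref{e:am-gm}); when $\bm\fn$ is near-equidistributed but some conditional row distribution $(\fm_{ij}/\fn_i)_{0\leq j\leq p-1}$ is far from uniform, repeating the argument \eqref{e:am-gm}--\eqref{e:ratebound3} one row at a time supplies the gain; and when $\fn_0$ is near $1$ while $M$ is not near the trivial matrix, the off-$(0,0)$ mass is spread out and a bound of the type \eqref{e:midbound} applies.

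With the rate-function estimate in hand I would split $\cN$ into four classes, as in the directed case. For matrices with $\max_{i,j}|\fm_{ij}-1/p^2|\leq\delta/p^2$, a second-order Taylor expansion of $J$ around the equidistributed point — the matrix counterpart of \eqref{e:sumexp}--\eqref{e:finalbound}, using $\sum_j\bmw_j\bmw_j^t=dp^{d-2}I_p+d(d-1)p^{d-3}\bm1\bm1^t$ — gives $J(\fm)\leq -c\fb\,p^2\ln n/n$ with $c=c(d)>0$ for the non-equidistributed $M$ in this class, so that summing over the $e^{\OO(p^2\ln n)}$ such matrices (and the $e^{\OO(p^2)}$ prefactor) is $\oo(1)/n^{d-2}$ once $\fb$ is large. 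For the matrices outside both neighbourhoods of the previous paragraph, the bound $J(\fm)\leq -c(\delta)/p$ gives a contribution $e^{-c(\delta)n/p+\OO(p^2\ln n)}=\oo(1)/n^{d-2}$. The remaining matrices — the near-trivial ones with $|\fn_0-1|\leq\delta/p$, i.e. $n_0=n-m$ with $m\leq\delta n/p$ — I would treat directly from the walk-path form of \eqref{e:keyest}: estimate the dominant entry $m_{00}$ together with the Stirling prefactor so that $\frac{2^{nd/2}(nd/2)!}{(nd)!}\cdot\frac{m_{00}!}{2^{m_{00}/2}(m_{00}/2)!}=(1+\oo(1))(dn)^{-(dn-m_{00})/2}$, use that all but $\OO(m)$ of the walk steps feeding the $0$-coordinate equal $\bmw_1=(d,0,\ldots,0)$ as in \eqref{e:pathest}, and sum over the choices of $(n_1,\ldots,n_{p-1})$ with $\sum_{i\geq1}n_i=m$ and of the symmetric matrices with these row sums; the dominant term is $m=2$ (null vectors supported on two coordinates, i.e. graphs containing the subgraph of Figure~\ref{f:K2d}), and the total comes out $\OO(p^{2d})/n^{d-2}$. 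Adding the four contributions yields \eqref{e:noneq}.

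The main obstacle is the near-trivial regime. For directed graphs the rate-function bound controlled the whole ``medium'' range $\fb p\ln n/n<|n_0/n-1|\leq\delta/p$ by itself, because only $e^{\OO(p\ln n)}$ tuples had to be accounted for; here the corresponding bound for a near-trivial matrix is only of order $-(d-1)(1-\fn_0)$, which is too weak to overcome the number of symmetric matrices sharing a given near-trivial row-sum vector. One is therefore forced to control the entire range $|\fn_0-1|\leq\delta/p$ by a careful direct enumeration, simultaneously handling the pairing factors $\prod_{i<j}m_{ij}!\prod_i m_{ii}!/(2^{m_{ii}/2}(m_{ii}/2)!)$ and the enlarged matrix count while keeping the $i=0$ walk essentially concentrated on $\bmw_1$. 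Making these combined estimates sharp enough that the sum over $m$ still converges — and remains $\OO(p^{2d})/n^{d-2}$ — is exactly what forces the hypothesis $p\ll n^{(d-2)/(5d-6)}$, which is more restrictive than the $p\ll n^{(d-2)/(2d)}$ sufficient for Proposition~\ref{p:ldpd}.
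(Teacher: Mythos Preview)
Your overall plan mirrors the paper's four-class decomposition and gets most of the ingredients right, but there is a genuine gap in the treatment of the near-trivial regime $\fm_{00}\geq 1-\delta/p$.

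You propose to handle this entire range by direct enumeration. That cannot work. The walk-path/pairing estimate you outline gives, for matrices with $r=n-n_0$ and $\ell=dn-m_{00}$, a term of size roughly $\bigl(\OO(1)\,p^d r^{d/2-1}/n^{d/2-1}\bigr)^r$ after the combinatorial sums are carried out (this is exactly how the paper's Class~3 bound \eqref{e:case3} is obtained). For this geometric series to converge one needs the base to be $<1$, i.e.\ $p^d r^{d/2-1}\ll n^{d/2-1}$; under the hypothesis $p\ll n^{(d-2)/(5d-6)}$ this holds only up to $r\leq \fb p^3\ln n$. If you let $r$ run all the way to $\delta n/p$ the base becomes $\OO(\delta^{d/2-1})\,p^{d/2+1}\gg 1$ and the sum diverges. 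So direct enumeration can only cover the \emph{very} near-trivial window $\delta_{00}\leq \fb p^3\ln n/n$, and something different is needed for the intermediate range $\fb p^3\ln n/n<\delta_{00}\leq\delta/p$. In the paper this ``Class~2'' range is handled by a \emph{refined} rate-function bound: one shows $I(\fm)\leq -c(\varepsilon)\delta_{00}/p$ (not merely $-(d-1)(1-\fn_0)$), by combining the row-wise estimate \eqref{e:i2i31} with the inequality \eqref{e:i2i32} relating $\delta_{00}$ to $\sum_{i\in I_2,k\geq 1}\fm_{ik}$ and $\sum_{i\in I_3}\fn_i$. With that bound the number $e^{\OO(p^2\ln n)}$ of matrices in this range is beaten once $\fb$ is large. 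Your final paragraph correctly diagnoses that $-(d-1)(1-\fn_0)$ is too weak, but the fix is a sharper rate-function estimate on the medium range, not a direct enumeration over the whole range.

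A secondary issue: your claimed global bound $J(\fm)\leq -c(\delta)/p$ is stronger than what the paper proves, namely $-c(\delta)/p^2$ (Proposition~\ref{p:ldpboundu}), and your sketch for it does not go through. Applying Proposition~\ref{p:ldpbound} to the row-sum vector $\bm\fn$ bounds the \emph{directed} rate function $I_{\mathrm{dir}}(\bm\fn)$, but the undirected rate function differs from it by $\tfrac{d}{2}\sum_{i,j}\fm_{ij}\ln(\fm_{ij}/(\fn_i\fn_j))\geq 0$, a Kullback--Leibler term that can be large enough to cancel the gain. The correct route is the row-by-row decomposition via the sets $I_1,I_2,I_3$ in \eqref{e:decompose}, which yields $-c(\delta)/p^2$; this weaker bound is still ample for the ``remaining'' class since $e^{-c(\delta)n/p^2+\OO(p^2\ln n)}=\oo(1)/n^{d-2}$ under the stated hypothesis.
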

Thanks to Proposition \ref{p:walkrepu}, we have
\begin{align}\begin{split}\label{e:expldp}
&\phantom{{}={}}\frac{1}{|\mathsf{G}_{n,d}|}\sum_{M \in \cN}\sum_{\bmv\in \cS(n_0(M),n_1(M),\cdots, n_{p-1}(M))} |\{\cG\in \mathsf{G}_{n,d}: A(\cG)\bmv=\bm0\}|\\
&=\frac{2^{nd/2}(nd/2)!}{(nd)!}\sum_{M\in \cN}{n\choose n_0(M),n_1(M),\cdots, n_{p-1}(M)}\prod_{0\leq i<j\leq p-1}m_{ij}!\prod_{i=0}^{p-1}\frac{m_{ii}!}{2^{m_{ii}/2}(m_{ii}/2)!}\\
& \phantom{{}={}}\times p^{(d-1)n}\prod_{i=0}^{p-1}\bP(X_1+X_2+\cdots+X_{n_i}=(m_{i0},m_{i1},\cdots, m_{ip-1}))
\end{split}\end{align}
where $X_1,X_2,\cdots, X_{n_i}$ are independent copies of $X$, which is uniform distributed over $\cU_{d,p}$ as defined in \eqref{e:defcU}.
In the rest of the proof, we simply write $n_i(M)$ as $n_i$ for $0\leq i\leq p-1$. For an non-equidistributed $p\times p$ symmetric matrix $M=[m_{ij}]_{0\leq i,j\leq p-1}$, we denote $\fm_{ij}=m_{ij}/(dn)$ for $i,j=0,1,\cdots, p-1$, and $\fn_i=n_i/n$ for $i=0,1,\cdots, p-1$. Then we have $\fn_i=\sum_{j=0}^{p-1}\fm_{ij}$ for $i=0,1,\cdots, p-1$. Moreover, by our definition of non-equidistributed, 
$\sum_{i,j=0}^{p-1}(\fm_{ij}-1/p^2)^2\geq \fb\ln n/n$. We estimate the first factor on the righthand side of \eqref{e:expldp} using Stirling's formula,
\begin{align}\begin{split}\label{e:firstfactor2}
&\phantom{{}={}}\frac{2^{nd/2}(nd/2)!}{(nd)!}{n\choose n_0,n_1,\cdots, n_{p-1}}\prod_{0\leq i<j\leq p-1}m_{ij}!\prod_{i=0}^{p-1}\frac{m_{ii}!}{2^{m_{ii}/2}(m_{ii}/2)!}\\
&\leq e^{\OO(p^2\ln n)}
\exp\left\{\frac{dn}{2}\sum_{i,j=0}^{p-1}{\fm_{ij}}\ln \fm_{ij}-n\sum_{j=0}^{p-1}\fn_j\ln \fn_j \right\}.
\end{split}\end{align}
For the random walk term in \eqref{e:expldp}, we have the following large deviation bound
\begin{align*}
\bP(X_1+X_2+\cdots+X_{n_i}=(m_{i0},m_{i1},\cdots, m_{ip-1}))
\leq \exp\left\{n\inf_{\bmt_i\in \bR^p} \fn_i\log \bE[e^{\langle \bmt_i, X\rangle}]-d\langle \bmt_i, \bm\fm_i\rangle\right\},
\end{align*}
where $\fm_i=(\fm_{i0}, \fm_{i1},\cdots, \fm_{ip-1})$.
Thus we get that
\begin{align*}
\begin{split}
&\phantom{{}={}}\frac{1}{|\mathsf{G}_{n,d}|}\sum_{M \in \cN}\sum_{\bmv\in \cS(n_0(M),n_1(M),\cdots, n_{p-1}(M))} |\{\cG\in \mathsf{G}_{n,d}: A(\cG)\bmv=\bm0\}|\\
&\leq 
\sum_{M \in \cN}\sum_{\bmv\in \cS(n_0(M),n_1(M),\cdots, n_{p-1}(M))} 
e^{\OO(p^2\ln n)}e^{nI(\fm_0,\fm_1, \cdots, \fm_{p-1})}
\end{split}\end{align*}
where the rate function is given by
\begin{align}
\begin{split}\label{e:ratefu}
I(\fm_0,\fm_1, \cdots, \fm_{p-1})
&=
(d-1)\ln p+\frac{d}{2}\sum_{i,j=0}^{p-1}{\fm_{ij}}\ln \fm_{ij}-\sum_{j=0}^{p-1}\fn_j\ln \fn_j\\
&+\sum_{j=0}^{p-1}\inf_{\bmt_j\in \bR^p} \fn_j\log \bE[e^{\langle \bmt_j, X\rangle}]-d\langle \bmt_j, \bm\fm_j\rangle.
\end{split}\end{align}

\begin{proposition}\label{p:ldpboundu}
Let $d\geq 3$ be a fixed integer, and a prime number $p$ such that $\gcd(p,d)=1$. The rate function as defined in \eqref{e:ratefu} satisfies: for any small $\delta> 0$, there exists a constant $c(\delta)>0$, such that
\begin{align}\label{e:rateboundu}
I(\fm_0, \fm_1,\cdots, \fm_{p-1})\leq -\frac{c(\delta)}{p^2},
\end{align}
unless
$ \max_{0\leq i,j\leq p-1}|\fm_{ij}-1/p^2|\leq \delta/p^2$, or
$\fm_{00}\geq 1-\delta/p$.
\end{proposition}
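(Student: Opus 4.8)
The plan is to run the strategy of the proof of Proposition~\ref{p:ldpbound}, after first separating the two nested levels of entropy present in \eqref{e:ratefu}. For each $j$ with $\fn_j>0$ put $q_{jk}=\fm_{jk}/\fn_j$, so that $(q_{j0},\dots,q_{j,p-1})$ is a probability vector; rows with $\fn_j=0$ drop out. In the $j$-th infimum in \eqref{e:ratefu} I would use the test vector $\bmt_j$ given by $\bmt_j(k)=\frac{d-1}{d}(\ln q_{jk}+\ln p)$ (with the usual conventions when a coordinate vanishes). Using $\sum_k\bmw_l(k)=d$, $\sum_k q_{jk}=1$, $\sum_j\fn_j=1$, and --- this is the one genuinely new ingredient compared with the directed case --- the symmetry $\fm_{jk}=\fm_{kj}$, which lets $\sum_{j,k}\fm_{jk}\ln(\fn_j^2/\fm_{jk})$ be rewritten in symmetrized form as $\sum_{j,k}\fm_{jk}\ln(\fn_j\fn_k/\fm_{jk})$, all the $\ln\fm_{jk}$ and $\ln\fn_j$ terms collapse; the surviving coefficient is $\tfrac d2-1>0$ (this is where $d\geq 3$ enters), and one reaches
\[
I(\fm_0,\dots,\fm_{p-1})\ \leq\ -\Big(\tfrac d2-1\Big)D\ +\ \sum_{j=0}^{p-1}\fn_j\log S_j,
\]
where $D:=\sum_{i,j=0}^{p-1}\fm_{ij}\ln\big(\fm_{ij}/(\fn_i\fn_j)\big)$ is the relative entropy of the joint distribution $(\fm_{ij})$ with respect to the product of its marginals, and $S_j:=\sum_{l=1}^{p^{d-1}}\prod_{k=0}^{p-1}q_{jk}^{\frac{d-1}{d}\bmw_l(k)}$. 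Here $D\geq 0$ by Jensen, while $S_j\leq 1$ by the arithmetic--geometric mean computation of \eqref{e:am-gm}, so both terms on the right are $\leq 0$, and one may also use $I\leq-(\tfrac d2-1)D$ and $I\leq\sum_j\fn_j\log S_j$ separately.

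The next step is a case analysis. Applying \eqref{e:ratebound2} (which is really a statement about arbitrary probability vectors) with $q_j$ in place of $\fn$ gives $\log S_j\leq-c(\delta)/p$ for every $j$ in the family $\cB:=\{ j : \max_k|q_{jk}-1/p|>\delta/p \text{ and } q_{j0}<1-\delta/p \}$, whence $I\leq\sum_j\fn_j\log S_j\leq-\tfrac{c(\delta)}{p}\sum_{j\in\cB}\fn_j$; so the claim holds (with the better rate $c(\delta)/p$) as soon as $\sum_{j\in\cB}\fn_j$ is bounded below by a constant. Similarly, if $D\geq c_1/p^2$ for a suitable $c_1(\delta)>0$, the first term already yields $I\leq-c(\delta)/p^2$. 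It remains to handle the regime in which $D<c_1/p^2$ and all but a constant fraction of the total mass $\sum_j\fn_j=1$ sits on rows $j$ with $q_j$ either within $\delta/p$ of uniform (family $\cV$) or with $q_{j0}\geq 1-\delta/p$ (family $\cW$).

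In that regime I would argue directly on the symmetric matrix $\fm$, using the Pinsker bound $\sum_{i,j}|\fm_{ij}-\fn_i\fn_j|\leq\sqrt{2D}$ coming from $D<c_1/p^2$, together with $\fn_k=\sum_j\fm_{jk}$ and the symmetry. First, $\cV\cup\cW$ must contain rows $j$ with $\fn_j\gtrsim 1/p$ carrying a constant share of the mass, since there are only $p$ rows in all. If those rows lie mostly in $\cV$, then for each of them $q_j\approx\fn$ (the $\ell^1$ bound divided by $\fn_j$ is small), which forces $\fn$ within $O(\delta)/p$ of uniform; then either $\max_{i,j}|\fm_{ij}-1/p^2|\leq\delta/p^2$, or the residual discrepancy contributes $D\gtrsim\delta^2/p^2$. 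If instead they lie mostly in $\cW$, then, as each $\cW$-row places $\geq(1-\delta/p)\fn_j$ of its mass into column $0$, the marginal $\fn_0$ is pushed close to $1$; using symmetry, $\fm_{00}=\fn_0-\sum_{k\neq 0}\fm_{k0}\geq 2\fn_0-1$, and one shows either $\fm_{00}\geq 1-\delta/p$ or the off-column-$0$ part of $\fm$ is $\ell^1$-far from the corresponding part of $(\fn_i\fn_j)$, again forcing $D\gtrsim 1/p^2$, contrary to assumption. With all the intermediate thresholds chosen as suitably small multiples of $\delta$, every branch terminates in one of the three permitted outcomes: $\max_{i,j}|\fm_{ij}-1/p^2|\leq\delta/p^2$, or $\fm_{00}\geq 1-\delta/p$, or $I\leq-c(\delta)/p^2$.

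The main obstacle is precisely this last regime, where neither term of the bound is on its own large, and one must combine the near-product structure of $\fm$ (forced by $D$ being small) with the rigidity supplied by the symmetry $\fm_{jk}=\fm_{kj}$. It is also the source of the weakening from the $1/p$ of Proposition~\ref{p:ldpbound} to $1/p^2$ here: when the marginals $\fn$ are essentially uniform, a single entry of $\fm$ can be off by $\delta/p^2$ while generating only $\sim\delta^2/p^2$ worth of relative entropy $D$, so $I\leq-(\tfrac d2-1)D$ cannot do better than $1/p^2$. The delicate part of the whole argument is keeping all the auxiliary constants ($c_0$, $c_1$, the thresholds defining $\cV$ and $\cW$, and the final $\delta$) mutually consistent through the nested cases.
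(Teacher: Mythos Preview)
Your opening is exactly the paper's: the same test vectors $\bmt_j$, and the same upper bound $I\leq -\tfrac{d-2}{2}D+\sum_j\fn_j\log S_j$ (the paper's \eqref{e:sharprateu}), with the symmetry of $\fm$ used to symmetrize $\ln(\fn_j^2/\fm_{jk})$. Your row classification $\cV,\cW,\cB$ is the paper's $I_1,I_2,I_3$ via Proposition~\ref{p:ldpbound}. From there, however, the two arguments diverge.

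The paper does \emph{not} use Pinsker or the $D$-threshold. Instead it (i) thresholds the $I_3$-mass at $\varepsilon/p$ (not a constant), which already yields $I\le -c(\varepsilon)\varepsilon/p^2$; (ii) for rows $i\in I_2$ invokes the sharper estimate \eqref{e:midbound}, giving $\fn_i\log S_i\le -(1+o(1))(d-1)\sum_{k\ge 1}\fm_{ik}$, and thresholds $\sum_{i\in I_2}\sum_{k\ge1}\fm_{ik}$ at $\varepsilon/p^2$; and (iii) finishes by a sub-case analysis on whether the index $0$ lies in $I_1$, $I_2$, or $I_3$, using the symmetry $\fm_{ij}=\fm_{ji}$ directly (e.g.\ $\fm_{ij}\le\frac{1+\varepsilon}{1-\varepsilon}\fm_{ii'}=\frac{1+\varepsilon}{1-\varepsilon}\fm_{i'i}\le\frac{(1+\varepsilon)^2}{(1-\varepsilon)^2}\fm_{i'j'}$ when $I_1=\{0,\dots,p-1\}$). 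The $D$-term is used only in the single sub-case $0\in I_1$, $I_2\neq\emptyset$, where $D$ turns out to be a positive constant $\approx\ln 2$.

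Your Pinsker route has a real gap as written. The bound $\|\fm-\fn\otimes\fn\|_1<\sqrt{2c_1}/p$ gives, for a row $j$, the entrywise estimate $|q_{jk}-\fn_k|\le \sqrt{2c_1}/(p\fn_j)$. With only $\fn_j\gtrsim 1/p$ (which is all pigeonhole yields), this is $O(\sqrt{c_1})$, a small \emph{constant}, not $O(\delta)/p$; so ``$\fn$ within $O(\delta)/p$ of uniform'' does not follow, and neither does the subsequent ``single bad entry forces $D\gtrsim\delta^2/p^2$'' step. Likewise in the $\cW$ branch: with $\sum_{j\in\cB}\fn_j$ only below a constant $c_0$, you get $\fn_0\ge 1-O(c_0)$ and $\fm_{00}\ge 1-O(c_0)$, far from the target $1-\delta/p$. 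To close either branch you need a row with $\fn_j$ bounded below by a \emph{constant}; obtaining such a row forces you back into a sub-case analysis on the index $0$ (if $\cW$-mass is $\gtrsim$ constant then $\fn_0\gtrsim$ constant, hence by Pinsker on row $0$ one gets $\fn_0\approx q_{00}$, forcing $0\in\cW$, etc.), and in the $\cV$ branch you end up using the symmetry chain $\fm_{jk}=\fm_{kj}$ directly rather than Pinsker. The ingredient you are missing outright is the refined bound \eqref{e:midbound} for $I_2$-rows, which is what lets the paper threshold at $\varepsilon/p^2$ and avoid the $\ell^1$-to-$\ell^\infty$ loss altogether.
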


\begin{proof}
We take $\bmt_j=(d-1)/d((\ln (\fm_{j0}/\fn_j), \ln (\fm_{j1}/\fn_j),\cdots, \ln (\fm_{j p-1}/\fn_j))+\ln p)$ in \eqref{e:ratefu}, the rate function $I$ is upper bounded by 
\begin{align}\begin{split}\label{e:sharprateu}
I(\fm_0,\fm_1, \cdots, \fm_{p-1})
&\leq \frac{d-2}{2}\sum_{i,j=0}^{p-1}\fm_{ij}\ln \frac{\fn_i\fn_j}{\fm_{ij}}+\sum_{i=0}^{p-1}\fn_i\log \sum_{j=1}^{p^{d-1}}\prod_{k=0}^{p-1}\left(\frac{\fm_{ik}}{\fn_i}\right)^{\frac{d-1}{d}\bmw_j(k)}.
\end{split}\end{align}
In the following, we prove that there exists a constant $c(\delta)$
\begin{align}\label{e:ratebound2u}
\frac{d-2}{2}\sum_{i,j=0}^{p-1}\fm_{ij}\ln \frac{\fn_i\fn_j}{\fm_{ij}}+\sum_{i=0}^{p-1}\fn_i\log \sum_{j=1}^{p^{d-1}}\prod_{k=0}^{p-1}\left(\frac{\fm_{ik}}{\fn_i}\right)^{\frac{d-1}{d}\bmw_j(k)}\leq -\frac{c(\delta)}{p^2}, 
\end{align}
unless $ \max_{0\leq i,j\leq p-1}|\fm_{ij}-1/p^2|\leq \delta/p^2$, or
$\fm_{00}\geq 1-\delta/p$. Then the claim \eqref{e:rateboundu} follows.

Thanks to Proposition \ref{p:ldpbound}, for any $\fn_i>0$ and $\varepsilon>0$ sufficiently small, 
\begin{align}\label{e:ldpboundcopy}
\log \sum_{j=1}^{p^{d-1}}\prod_{k=0}^{p-1}\left(\frac{\fm_{ik}}{\fn_i}\right)^{\frac{d-1}{d}\bmw_j(k)}\leq -\frac{c(\varepsilon)}{p},
\end{align}
unless $ \max_{0\leq k\leq p-1}|\fm_{ik}/\fn_i-1/p|\leq \varepsilon/p$, or
$\fm_{i0}/\fn_i\geq 1-\varepsilon/p$. We decompose $\{0,1,\cdots, p-1\}=I_1\cup I_2\cup I_3$, where
\begin{align}\begin{split}\label{e:decompose}
&I_1=\{0\leq i\leq p-1: \max_{0\leq k\leq p-1}|\fm_{ik}/\fn_i-1/p|\leq \varepsilon/p\},\\
&I_2=\{0\leq i\leq p-1: \fm_{i0}/\fn_i\geq 1-\varepsilon/p\}, \quad I_3=\{0,1,\cdots, p-1\}\setminus (I_1\cup I_2).
\end{split}\end{align}
Thanks to \eqref{e:ldpboundcopy}, we have
\begin{align*}
\sum_{i\in I_3}\fn_i \log \sum_{j=1}^{p^{d-1}}\prod_{k=0}^{p-1}\left(\frac{\fm_{ik}}{\fn_i}\right)^{\frac{d-1}{d}\bmw_j(k)}\leq -\frac{c(\varepsilon)}{p}\sum_{i\in I_3}\fn_i.
\end{align*}
Therefore, if $\sum_{i\in I_3}\fn_i\geq \varepsilon/p$ then \eqref{e:ratebound2u} holds. In the following we assume that $\sum_{i\in I_3}\fn_i\leq \varepsilon/p$. There are several cases:
\begin{enumerate}
\item $I_2=\emptyset$: From the discussion above, we have $\sum_{i\in I_3}\fn_i\leq \varepsilon/p$ and $\sum_{i\in I_1}\fn_i\geq 1-\varepsilon/p$. If $I_3\neq \emptyset$, we fix any $k\in I_3$. By the definition of $I_1$, we have $\sum_{i\in I_1}\fn_i\leq \sum_{i\in I_1}p\fm_{ik}/(1-\varepsilon)\leq p\fn_k/(1-\varepsilon)\leq \varepsilon/(1-\varepsilon)$, which leads to a contradiction. Therefore $I_3=\emptyset$ and $I_1=\{0,1,2,\cdots,p-1\}$. Then for any $\fm_{ij}$ and $\fm_{i'j'}$, we have
\begin{align*}
\fm_{ij}\leq \frac{1+\varepsilon}{1-\varepsilon}\fm_{ii'}=\frac{1+\varepsilon}{1-\varepsilon}\fm_{i'i}\leq \frac{(1+\varepsilon)^2}{(1-\varepsilon)^2}\fm_{i'j'}.
\end{align*}
It follows that $|\fm_{ij}-1/p^2|\leq \delta/p^2$ by taking $4\varepsilon/(1-\varepsilon)^2
\leq \delta$.

\item $I_2\neq \emptyset$:
Thanks to \eqref{e:midbound}, if $i\in I_2$,  
\begin{align*}\begin{split}
&\phantom{{}={}}\fn_i\log \sum_{j=1}^{p^{d-1}}\prod_{k=0}^{p-1}\left(\frac{\fm_{ik}}{\fn_i}\right)^{\frac{d-1}{d}\bmw_j(k)}
\leq -(1+\oo(1))(d-1)\fn_i\left(1-\frac{\fm_{i0}}{\fn_i}\right)\\
&=-(1+\oo(1))(d-1)\left(\fn_i-\fm_{i0}\right)=-(1+\oo(1))(d-1)\sum_{1\leq k\leq p-1}\fm_{ik}.
\end{split}\end{align*}
Therefore, if $\sum_{i\in I_2}\sum_{1\leq k\leq p-1}\fm_{ik}\geq \varepsilon/p^2$ then we have that \eqref{e:ratebound2u} holds. In the following, we assume $\sum_{i\in I_2}\sum_{1\leq k\leq p-1}\fm_{ik}\leq \varepsilon/p^2$. There are several cases.
\begin{enumerate}
\item $0\in I_1$: By the definition of $I_1$, for any $0\leq k\leq p-1$, we have
\begin{align*}
(1-\varepsilon)\fn_0/p\leq\fm_{k0}=\fm_{0k}\leq (1+\varepsilon)\fn_0/p.
\end{align*}
Fix any $j\in I_2$. Again by the definition of $I_1$, $\sum_{i\in I_1\setminus\{0\}}\fn_i\leq \sum_{i\in I_1\setminus\{0\}}p\fm_{ij}/(1-\varepsilon/p)=\sum_{i\in I_1\setminus\{0\}}p\fm_{ji}/(1-\varepsilon/p)\leq \varepsilon/(p-\varepsilon)$. Therefore, $\sum_{1\leq i,k\leq p-1}\fm_{ik}\leq \sum_{i\in I_3}\fn_i+\sum_{i\in I_1\setminus\{0\}}\fn_i+\sum_{i\in I_2}\sum_{1\leq k\leq p-1}\fm_{ik}\leq \varepsilon/p+\varepsilon/(p-\varepsilon)+\varepsilon/p^2=\OO(\varepsilon)/p$. As a consequence, we get $\fm_{0k}=\fm_{k0}=(1+\OO(\varepsilon))/(2p-1)$, for any $0\leq k\leq p-1$, and $\fn_0=(1+\OO(\varepsilon))p/(2p-1)$ and $\fn_k=(1+\OO(\varepsilon))/(2p-1)$ for $1\leq k\leq p-1$. In this case, the first term on the lefthand side of \eqref{e:ratebound2u} is small
\begin{align*}
\frac{d-2}{2}\sum_{i,j=0}^{p-1}\fm_{ij}\ln \frac{\fn_i\fn_j}{\fm_{ij}}\leq (1+\OO(\varepsilon))\frac{d-2}{2}\left(\frac{\ln p}{2p-1}+\ln \frac{p}{2p-1}\right)+\OO\left(\frac{\varepsilon}{p}\ln \frac{\varepsilon}{p}\right),
\end{align*}
and \eqref{e:ratebound2u} holds.

\item $0\in I_2$: By the definition of $I_2$, we have $\sum_{i\in I_2\setminus\{0\}}\fn_i\leq \sum_{i\in I_2\setminus\{0\}}\fm_{i0}/(1-\varepsilon/p)\leq \sum_{1\leq k\leq p-1}\fm_{0k}/(1-\varepsilon/p)\leq \varepsilon/p(p-\varepsilon)$. By the definition of $I_1$, we have $\sum_{i\in I_1}\fn_i\leq \sum_{i\in I_1}p\fm_{i0}/(1-\varepsilon)\leq \sum_{1\leq k\leq p-1}p\fm_{0k}/(1-\varepsilon)\leq \varepsilon/p(1-\varepsilon)$. Combining the discussion above, we get
\begin{align*}\begin{split}
\fm_{00}
&\geq \fn_{0}-\varepsilon/p^2=1-\sum_{k=1}^{p-1}\fn_k-\varepsilon/p^2\\
&\geq 1-\varepsilon/p(p-\varepsilon)-\varepsilon/p(1-\varepsilon)-\varepsilon/p-\varepsilon/p^2.
\end{split}\end{align*}
It follows that $\fm_{00}\geq 1-\delta/p$ by taking $4\varepsilon/(1-\varepsilon)\leq \delta$.
\item $0\in I_3$: By the definition of $I_1$, we have $\sum_{i\in I_1}\fn_i\leq \sum_{i\in I_1}p\fm_{i0}/(1-\varepsilon)\leq \sum_{1\leq k\leq p-1}p\fm_{0k}/(1-\varepsilon)\leq p\fn_0/(1-\varepsilon)\leq \varepsilon/(1-\varepsilon)$. However, we also know that $\sum_{i\in I_3}\fn_i\leq \varepsilon/p$ and $\sum_{i\in I_2}\sum_{1\leq k\leq p-1}\fm_{ik}\leq \varepsilon/p^2$. This contradicts to the fact $\sum_{0\leq i\leq p-1}\fn_i=1$.

\end{enumerate}

\end{enumerate}


\end{proof}

\begin{proof}[Proof of Proposition \ref{p:ldp}]
We further decompose the set of non-equidistributed $p\times p$ symmetric matrices $M=[m_{ij}]_{0\leq i,j\leq p-1}$ into four classes:
\begin{enumerate}
\item $p\times p$ symmetric matrices $M=[m_{ij}]_{0\leq i,j\leq p-1}\in \cN$ with $\max_{0\leq i,j\leq p-1}|m_{ij}/(dn)-1/p^2|\leq \delta/p^2$.
\item  $p\times p$ symmetric matrices $M=[m_{ij}]_{0\leq i,j\leq p-1}\in \cN$ with $\fb p^3\ln n/n<|m_{00}/(dn)-1|\leq \delta/p$.
\item  $p\times p$ symmetric matrices $M=[m_{ij}]_{0\leq i,j\leq p-1}\in \cN$ with $|m_{00}/(dn)-1|\leq \fb p^3\ln n/n$.
\item The remaining non-equidistributed  $p\times p$ symmetric matrices.
\end{enumerate}

For the first class, $\max_{0\leq i,j\leq p-1}|m_{ij}/(dn)-1/p^2|\leq \delta/p^2$. The total number of such $p\times p$ symmetric matrices is $e^{\OO(p^2\ln n)}$. 
Given a $p\times p$ symmetric matrix $M=[m_{ij}]_{0\leq i,j\leq p-1}$ in the first class,
we will derive a more precise estimate of \eqref{e:ratebound2u}, by a perturbation argument. Let 
\begin{align*}\begin{split}
&\fm_{ij}=(1+\delta_{ij})/p^2,\quad i=0,1,\cdots, p-1, \quad j=0,1,\cdots, p-1.
\end{split}\end{align*}
where $\delta_{ij}=\delta_{ji}$ for $0\leq i<j\leq p-1$, $\sum_{i,j=0}^{p-1}\delta_{ij}=0$, $\max_{0\leq i,j\leq p-1}|\delta_{ij}|\leq \delta$, and $\sum_{ij}\delta_{ij}^2\geq \fb p^4 \ln n/n$. We denote,
\begin{align}\label{e:constraint2}
\fn_i=\sum_{j=0}^{p-1}\fm_{ij}=(1+\delta_i)/p,\quad \delta_i=\sum_{j=0}^{p-1}\delta_{ij}/p.
\end{align}
We use Taylor expansion, and rewrite the first term in \eqref{e:sharprateu} as
\begin{align}\begin{split}\label{e:t1}
\frac{d-2}{2}\sum_{i,j=0}^{p-1}\fm_{ij}\ln \frac{\fn_i\fn_j}{\fm_{ij}}
&=(d-2)\sum_{i=0}^{p-1}\fn_{i}\ln \fn_i
-\frac{d-2}{2}\sum_{i,j=0}^{p-1}\fm_{ij}\ln \fm_{ij}\\
&=(d-2)\sum_{i=0}^{p-1}\frac{1+\delta_i}{p}\ln\frac{1+\delta_i}{p}
-\frac{d-2}{2}\sum_{i,j=0}^{p-1}\frac{1+\delta_{ij}}{p^2}\ln \frac{1+\delta_{ij}}
{p^2}\\
&=(1+\OO(\delta))\left(\frac{d-2}{2p}\sum_{i=0}^{p-1}\delta_i^2-\frac{d-2}{4p^2}\sum_{i,j=0}^{p-1}\delta_{ij}^2\right).
\end{split}\end{align}
For the second term in \eqref{e:sharprateu}, similar to \eqref{e:sumexp} we have
\begin{align*}\begin{split}
&\phantom{{}={}}\log \frac{1}{p^{d-1}}\sum_{j=1}^{p^{d-1}}\prod_{k=0}^{p-1}\fm_{ik}^{\frac{d-1}{d}\bmw_j(k)}=\log \frac{1}{p^{d-1}}\sum_{j=1}^{p^{d-1}}\prod_{k=0}^{p-1}e^{\frac{d-1}{d}\bmw_j(k)\ln(1+\delta_{ik})}\\
&=\log \frac{1}{p^{d-1}}\sum_{j=1}^{p^{d-1}}e^{\frac{d-1}{d}\sum_{k=0}^{p-1}\bmw_j(k)\left(\delta_{ik}-(1+\OO(\delta)\frac{\delta_{ik}^2}{2}\right)}\\
&=\log \frac{1}{p^{d-1}}\sum_{j=1}^{p^{d-1}}1+\frac{d-1}{d}\sum_{k=0}^{p-1}\bmw_j(k)\left(\delta_{ik}-(1+\OO(\delta)\frac{\delta_{ik}^2}{2}\right)+(1+\OO(\delta))\frac{(d-1)^2}{2d^2}\left(\sum_{k=0}^{p-1}\bmw_j(k)\delta_{ik}\right)^2
\\
&=\log 1+(d-1)\delta_i-(1+\OO(\delta))\frac{d-1}{2p}\sum_{k=0}^{p-1}\delta_{ik}^2+\frac{1+\OO(\delta)}{p^{d-1}}\frac{(d-1)^2}{2d^2}\sum_{j=1}^{p^{d-1}}\left(\sum_{k=0}^{p-1}\bmw_j(k)\delta_{ik}\right)^2\\
&=(d-1)\delta_i-(1+\OO(\delta))\frac{d-1}{2p}\sum_{k=0}^{p-1}\delta_{ik}^2+\frac{1+\OO(\delta)}{p^{d-1}}\frac{(d-1)^2}{2d^2}\sum_{j=1}^{p^{d-1}}\left(\sum_{k=0}^{p-1}\bmw_j(k)\delta_{ik}\right)^2-\frac{(d-1)^2}{2}\delta_i^2.
\end{split}\end{align*}
We can rewrite the second term in \eqref{e:sharprateu} as
\begin{align}\begin{split}\label{e:t2}
&\phantom{{}={}}\sum_{i=0}^{p-1}\fn_i\log \sum_{j=1}^{p^{d-1}}\prod_{k=0}^{p-1}\left(\frac{\fm_{ik}}{\fn_i}\right)^{\frac{d-1}{d}\bmw_j(k)}
=-(d-1)\sum_{i=0}^{p-1}\fn_i\log \fn_i+\sum_{i=0}^{p-1}\fn_i\log \sum_{j=1}^{p^{d-1}}\prod_{k=0}^{p-1}\fm_{ik}^{\frac{d-1}{d}\bmw_j(k)}\\
&=-(d-1)\sum_{i=0}^{p-1}\frac{1+\delta_i}{p}\log \frac{1+\delta_i}{p}+\sum_{i=0}^{p-1}\frac{1+\delta_i}{p}\log \frac{1}{p^{2(d-1)}}\sum_{j=1}^{p^{d-1}}\prod_{k=0}^{p-1}e^{\frac{d-1}{d}\bmw_j(k)\ln(1+\delta_{ik})}\\
&=(1+\OO(\delta))\left(-\frac{d^2-3d+2}{2p}\sum_{i=0}^{p-1}\delta_i^2-\frac{d-1}{2p^2}\sum_{i,k=0}^{p-1}\delta_{ik}^2+\frac{1}{p^d}\frac{(d-1)^2}{2d^2}\sum_{i=0}^{p-1}\sum_{j=1}^{p^{d-1}}\left(\sum_{k=0}^{p-1}\bmw_j(k)\delta_{ik}\right)^2\right).
\end{split}\end{align}
We get the following estimate of \eqref{e:sharprateu} by combining \eqref{e:t1} and \eqref{e:t2},
\begin{align}\begin{split}\label{e:simple}
&\phantom{{}={}}\frac{d-2}{2}\sum_{i,j=0}^{p-1}\fm_{ij}\ln \frac{\fn_i\fn_j}{\fm_{ij}}+\sum_{i=0}^{p-1}\fn_i\log \sum_{j=1}^{p^{d-1}}\prod_{k=0}^{p-1}\left(\frac{\fm_{ik}}{\fn_i}\right)^{\frac{d-1}{d}\bmw_j(k)}\\
&=(1+\OO(\delta))\left(-\frac{(d-2)^2}{2p}\sum_{i=0}^{p-1}\delta_i^2-\frac{3d-4}{4p^2}\sum_{i,k=0}^{p-1}\delta_{ik}^2+\frac{1}{p^d}\frac{(d-1)^2}{2d^2}\sum_{i=0}^{p-1}\sum_{j=1}^{p^{d-1}}\left(\sum_{k=0}^{p-1}\bmw_j(k)\delta_{ik}\right)^2\right).
\end{split}\end{align}
We denote the $p\times p^{d-1}$ matrix $W=[\bmw_1,\bmw_2,\cdots, \bmw_{p^{d-1}}]$ and $p\times p$ matrix $\bm\delta=[\delta_{ij}]_{0\leq i,j\leq p-1}$. Then $\bm \delta \in {\rm Sym}_p^0$, the Hilbert space of $p\times p$ real symmetric matrices with total sum zero, and inner product $\langle A, B\rangle=\Tr AB$. We can rewrite \eqref{e:simple} as a quadratic form on the space ${\rm Sym}_p^{0}$
\begin{align}\begin{split}\label{e:constraint3u}
&\phantom{{}={}}\frac{d-2}{2}\sum_{i,j=0}^{p-1}\fm_{ij}\ln \frac{\fn_i\fn_j}{\fm_{ij}}+\sum_{i=0}^{p-1}\fn_i\log \sum_{j=1}^{p^{d-1}}\prod_{k=0}^{p-1}\left(\frac{\fm_{ik}}{\fn_i}\right)^{\frac{d-1}{d}\bmw_j(k)}\\
&=(1+\OO(\delta))\left(-\frac{(d-2)^2}{2p^3}\langle \bm\delta, \bm1 \bm1^t \bm\delta\rangle-\frac{3d-4}{4p^2}\langle \bm\delta, \bm\delta\rangle+\frac{1}{p^d}\frac{(d-1)^2}{2d^2}\langle \bm\delta, WW^t \bm\delta\rangle\right)\\
&=(1+\OO(\delta))\left(\frac{d^2-d-1}{2dp^3}\langle \bm\delta, \bm1 \bm1^t \bm\delta\rangle-\frac{d^2-2}{4dp^2}\langle \bm\delta, \bm\delta\rangle\right),
\end{split}\end{align}
where we used \eqref{e:cov}, 
\begin{align*}
WW^t=dp^{d-2}I_{p}+d(d-1)p^{d-3}\bm1\bm1^t.
\end{align*}
We can rewrite the first term in \eqref{e:constraint3u} as $\langle \bm\delta, \bm1 \bm1^t\bm\delta\rangle=\langle \bm\delta, \cL(\bm\delta)\rangle$ where  the self-adjoint operator $\cL: {\rm Sym}_p^0\mapsto {\rm Sym}_p^0$ is given by  $\cL(\bm\delta)=\bm1\bm1^t \bm \delta/2+\bm\delta \bm1\bm1^t /2$.
The self-adjoint operator $\cL$ is diagonalized by 
\begin{enumerate}
\item If $\bm\delta\in{\rm Sym}_p^0$, with row sums and column sums zero, then $\cL(\bm\delta)=\bm0$. The total dimension of such matrices is $p(p-1)/2$.
\item If $\bm\delta=[a_i+a_j]_{0\leq i,j\leq p-1}$ for some vector $\bma=(a_0,a_1,\cdots, a_{p-1})\in \bR^{p}$ with $a_0+a_1+\cdots+a_{p-1}=0$, then $\cL(\bm\delta)=p\bm\delta/2$. The total dimension of such matrices is $p-1$.
\end{enumerate}
It follows from the spectral decomposition of the self-adjoint operator $\cL$, we get
\begin{align*}
\langle \bm\delta, \bm1\bm1^t\bm\delta\rangle=\langle \bm\delta, \cL(\bm\delta)\rangle\leq p\langle \bm\delta,\bm\delta\rangle/2,
\end{align*}
and 
\begin{align*}
&\phantom{{}={}}\frac{d-2}{2}\sum_{i,j=0}^{p-1}\fm_{ij}\ln \frac{\fn_i\fn_j}{\fm_{ij}}+\sum_{i=0}^{p-1}\fn_i\log \sum_{j=1}^{p^{d-1}}\prod_{k=0}^{p-1}\left(\frac{\fm_{ik}}{\fn_i}\right)^{\frac{d-1}{d}\bmw_j(k)}
\leq -(1+\OO(\delta))\frac{d-1}{4dp^2}\langle \bm\delta,\bm\delta\rangle.
\end{align*}
The total contribution of terms in \eqref{e:noneq} satisfying $\max_{0\leq i,j\leq p-1}|m_{ij}/(dn)-1/p^2|\leq \delta/p^2$ is bounded by
\begin{align}\label{e:case1}
\exp\left\{-\left(\frac{\fb (d-1)}{4d}+\oo(1)\right)p^2\ln n+\OO(p^2\ln n)\right\}=\frac{\oo(1)}{n^{(d-2)}},
\end{align}
provided that we take $\fb$ sufficiently large.

For the second class, $\fb p^3\ln n/n<|m_{00}/(dn)-1|\leq \delta/p$. The total number of such $p\times p$ symmetric matrices is $e^{\OO(p^2\ln n)}$. Given a $p\times p$ symmetric matrix $M=[m_{ij}]_{0\leq i,j\leq p-1}$ in the second class, we will derive a more precise estimate of \eqref{e:ratebound2u}, by a perturbative argument. Let $\fm_{00}=1-\delta_{00}$, where $\delta_{00}\leq \delta/p$.
We recall the decomposition $\{0,1,2,\cdots, p-1\}=I_1\cup I_2\cup I_3$ from \eqref{e:decompose}. If $0\in I_3$, then 
\begin{align*}
\fn_0\log \sum_{j=1}^{p^{d-1}}\prod_{k=0}^{p-1}\left(\frac{\fm_{0k}}{\fn_0}\right)^{\frac{d-1}{d}\bmw_j(k)}
\leq -\frac{(1-\delta_{00})c(\varepsilon)}{p}.
\end{align*}
Otherwise, $0\in I_2$. Similar argument as in the proof of Proposition \ref{p:ldpboundu}, we have
\begin{align}\label{e:i2i31}
\sum_{i\in I_2\cup I_3}
\fn_i\log \sum_{j=1}^{p^{d-1}}\prod_{k=0}^{p-1}\left(\frac{\fm_{ik}}{\fn_i}\right)^{\frac{d-1}{d}\bmw_j(k)}
\leq -(1+\oo(1))(d-1)\sum_{i\in I_2}\sum_{1\leq k\leq p-1}\fm_{ik}-\frac{c(\varepsilon)}{p}\sum_{i\in I_3}\fn_i.
\end{align}
Moreover, by the definition of the set $I_1$, we have $\sum_{i\in I_1}\fn_i\leq p/(1-\varepsilon)\sum_{i\in I_1}\fm_{i0}=p/(1-\varepsilon)\sum_{i\in I_1}\fm_{0i}$. Therefore,
\begin{align}\begin{split}\label{e:i2i32}
\delta_{00}=\sum_{0\leq i,j\leq p-1\atop (i,j)\neq (0,0)}\fm_{ij}
&\leq\sum_{i\in I_2\atop 1\leq k\leq p-1}\fm_{ik}+\sum_{i\in I_2\setminus\{0\}}\fm_{i0}+\frac{p}{1-\varepsilon}\sum_{i\in I_1}\fm_{0i}+\sum_{i\in I_3}\fn_i\\
&\leq \left(1+\frac{p}{1-\varepsilon}\right)\sum_{i\in I_2 \atop 1\leq k\leq p-1}\fm_{ik}+\sum_{i\in I_3}\fn_i.
\end{split}\end{align}
It follows from combining \eqref{e:i2i31} and \eqref{e:i2i32} we get
\begin{align*}
\sum_{i\in I_2\cup I_3}
\fn_i\log \sum_{j=1}^{p^{d-1}}\prod_{k=0}^{p-1}\left(\frac{\fm_{ik}}{\fn_i}\right)^{\frac{d-1}{d}\bmw_j(k)}
\leq -\frac{c(\varepsilon)\delta_{00}}{p}.
\end{align*}
Thus, the total contribution of terms in \eqref{e:noneq} satisfying $\fb p^3\ln n/n<|m_{00}/(dn)-1|\leq \delta/p$ is bounded by
\begin{align}\label{e:case2}
\exp\left\{-\fb c(\varepsilon)p^2\ln n+\OO(p^2\ln n)\right\}=\frac{\oo(1)}{n^{(d-2)}},
\end{align}
provided that we take $\fb$ sufficiently large.

For the third class, $|m_{00}/(dn)-1|\leq \fb p^3\ln n/n$. We denote 
$\cM(r,\ell)\subset \cM$ the set of $p\times p$ symmetric matrices $M=[m_{ij}]_{0\leq i,j\leq p-1}$ such that 
$\ell=nd-m_{00}$, and $r=n-n_0(M)$. Then $\sum_{i=1}^{p-1}m_{0i}=\sum_{i=1}^{p-1} m_{i0}=\ell-dr$, and $\sum_{1\leq i,j\leq p-1}m_{ij}=2dr-\ell$. Especially, we have $\cM(r,\ell)$ is nonempty only if $dr\leq \ell\leq 2dr$.
The total contribution of terms in \eqref{e:noneq} satisfying $|m_{00}/(dn)-1|\leq \fb p^3 \ln n/n$ is bounded by
\begin{align}\begin{split}\label{e:expldp3}
&\frac{2^{nd/2}(nd/2)!}{(nd)!}\sum_{2\leq r\leq \fb p^3\ln n/d\atop dr\leq\ell\leq 2dr}\sum_{M\in \cM(r,\ell)}{n\choose n_0(M),n_1(M),\cdots, n_{p-1}(M)}\prod_{0\leq i<j\leq p-1}m_{ij}!\prod_{i=0}^{p-1}\frac{m_{ii}!}{2^{m_{ii}/2}(m_{ii}/2)!}\\
&\times\prod_{i=0}^{p-1}|\{(\bmu_1,\bmu_2\cdots, \bmu_{n_i(M)})\in \cU_{d,p}^{n_i(M)}: \bmu_1+\bmu_2+\cdots+\bmu_{n_i(M)}=(m_{i0}, m_{i1},\cdots, m_{ip-1})\}|.
\end{split}\end{align}
We reestimate the first factor on the righthand side of \eqref{e:expldp3},
\begin{align}\begin{split}\label{e:expldp5}
&\phantom{{}={}}\frac{2^{nd/2}(nd/2)!}{(nd)!}{n\choose n_0(M),n_1(M),\cdots, n_{p-1}(M)}\prod_{0\leq i<j\leq p-1}m_{ij}!\prod_{i=0}^{p-1}\frac{m_{ii}!}{2^{m_{ii}/2}(m_{ii}/2)!}\\
&\leq\frac{e^{\OO(r)}}{n^{\ell/2-r}}
\frac{1}{n_1(M)!n_2(M)!\cdots n_{p-1}(M)!}\prod_{0\leq i<j\leq p-1}m_{ij}!\prod_{i=1}^{p-1}\frac{m_{ii}!}{2^{m_{ii}/2}(m_{ii}/2)!}.
\end{split}\end{align}
For the number of walk paths in \eqref{e:expldp3}, we notice that $\bmw_j(1)+\bmw_j(2)+\cdots \bmw_j(p-1)\geq 2$ for $2\leq j\leq p^{d-1}$. Moreover, we have $\bmu_1+\bmu_2+\cdots+\bmu_{n_0(M)}=(m_{00}, m_{01},\cdots, m_{0p-1})$, with $m_{01}+m_{02}+\cdots+m_{0p-1}=dn_{0}(M)-m_{00}=\ell-dr$. Therefore $\bmu_i=\bmw_1$ for all $1\leq i\leq n_0(M)$, except for at most $(\ell-dr)/2$ of them. Therefore, we have
\begin{align}\begin{split}\label{e:pbound1}
&\phantom{{}={}}|\{(\bmu_1,\bmu_2\cdots, \bmu_{n_0(M)})\in \cU_{d,p}^{n_0(M)}: \bmu_1+\bmu_2+\cdots+\bmu_{n_0(M)}=(m_{00}, m_{01},\cdots, m_{0p-1})\}|\\
&\leq \frac{(\ell-dr)!}{m_{01}! m_{02}!\cdots m_{0p-1}!} \sum_{k=1}^{(\ell-dr)/2}n^k{dm-k-1\choose k-1}\leq\OO(1)\frac{(\ell-dr)!}{m_{01}! m_{02}!\cdots m_{0p-1}!} n^{(\ell-dr)/2}.
\end{split}\end{align}
For the number of walk paths in \eqref{e:expldp3} corresponding to $i=1,2,\cdots, p-1$, we have the trivial bound
\begin{align}\begin{split}\label{e:pbound2}
&\phantom{{}={}}|\{(\bmu_1,\bmu_2\cdots, \bmu_{n_i(M)})\in \cU_{d,p}^{n_i(M)}: \bmu_1+\bmu_2+\cdots+\bmu_{n_i(M)}=(m_{i0}, m_{i1},\cdots, m_{ip-1})\}|\\
&\leq \frac{(dn_i(M))!}{m_{i0}! m_{i1}!\cdots m_{ip-1}!}.
\end{split}\end{align}
Combining the estimates \eqref{e:pbound1} and \eqref{e:pbound2}, we get the following bound on the number of walk paths in \eqref{e:expldp3},
\begin{align}\begin{split}\label{e:expldp6}
&\phantom{{}={}}\prod_{i=0}^{p-1}|\{(\bmu_1,\bmu_2\cdots, \bmu_{n_i(M)})\in \cU_{d,p}^{n_i(M)}: \bmu_1+\bmu_2+\cdots+\bmu_{n_i(M)}=(m_{i0}, m_{i1},\cdots, m_{ip-1})\}|\\
&\leq\OO(1) n^{(\ell-dr)/2}\frac{(\ell-dk)!}{m_{01}! m_{02}!\cdots m_{0p-1}!}
\prod_{i=1}^{p-1}\frac{(dn_i(M))!}{m_{i0}! m_{i1}!\cdots m_{ip-1}!}
.
\end{split}\end{align}
The total contribution of terms in \eqref{e:noneq} satisfying $|m_{00}/(dn)-1|\leq \fb p^3\ln n/n$ is bounded by
\begin{align}\begin{split}\label{e:case3}
&\phantom{{}={}}\sum_{2\leq r\leq \fb p^3\ln n/d\atop dr\leq\ell\leq 2dr}\sum_{M\in \cM(r,\ell)}
\frac{e^{\OO(r)}(\ell-dr)!}{n^{(d/2-1)r}}
\prod_{i=1}^{p-1}\frac{(dn_i(M))!}{n_i(M)!}
\prod_{0\leq i<j\leq p-1}\frac{1}{m_{ij}!}\prod_{i=1}^{p-1}\frac{1}{2^{m_{ii}/2}(m_{ii}/2)!}\\
&\leq\sum_{2\leq r\leq \fb p^3\ln n/d\atop dr\leq\ell\leq 2dr}
\frac{e^{\OO(r)}(\ell-dr)!r^{(d-1)r}}{n^{(d/2-1)r}}
\sum_{M\in \cM(r,\ell)}\prod_{i=1}^{p-1}\frac{1}{m_{i0}!}
\prod_{1\leq i\leq j\leq p-1}\frac{1}{(m_{ij}/(1+\delta_{ij}))!}\\
&\leq \sum_{2\leq r\leq \fb p^3\ln n/d\atop dr\leq \ell\leq 2dr }
\frac{e^{\OO(r)}r^{(d-1)r}}{n^{(d/2-1)r}}
\frac{p^{dr}}{(dr-\ell/2)!}\\
&\leq 
\sum_{2\leq r\leq \fb p^3\ln n/d}
\left(\frac{\OO(1)p^d r^{(d/2-1)}}{n^{(d/2-1)}}\right)^r\leq \frac{\OO(1)p^{2d}}{n^{d-2}},
\end{split}\end{align}
provided that $p\ll n^{(d-2)/(5d-6)}$.

For the last class, the total number of such $p\times p$ symmetric matrices is $e^{\OO(p^2\ln n)}$, and thanks to Proposition \ref{p:ldpboundu}, each term is exponentially small, i.e. $e^{-c(\delta)n/p^2}$. Therefore the total contribution is
\begin{align}\label{e:case4}
\exp\{-c(\delta) n/p^2+\OO(p^2\ln n)\}=\frac{\oo(1)}{n^{(d-2)}}.
\end{align}

The claim \eqref{e:noneq} follows from combining the discussion of all four cases, \eqref{e:case1}, \eqref{e:case2}, \eqref{e:case3} and \eqref{e:case4}.
\end{proof}
\bibliography{References.bib}{}
\bibliographystyle{abbrv}

\end{document}